\newcommand{\pa}{\partial}
\newcommand{\lt}{\left}
\newcommand{\rt}{\right}
\newcommand{\R}{\mathbb{R}}
\newtheorem{theorem}{Theorem}[section]
\theoremstyle{definition}
\newtheorem{definition}[theorem]{Definition}
\theoremstyle{remark}
\newtheorem{remark}[theorem]{Remark}
\newtheoremstyle{examplestyle}
   {}{}{}{}{\bfseries}{.}{.5em}{{\thmname{#1 }}{\thmnumber{#2}}{\thmnote{ (#3)}}}
\theoremstyle{examplestyle}
\newtheorem{examplecase}{Example}[section]
\begin{document}

\title[  ]{High-order well-balanced finite volume schemes for hydrodynamic equations with nonlocal free energy}

\author{Jos\'e A. Carrillo}
\address[Jos\'e A. Carrillo]{Mathematical Institute, University of Oxford, Oxford OX2 6GG, UK}
\curraddr{}
\email{carrillo@maths.ox.ac.uk}
\thanks{}

\author{Manuel J. Castro}
\address[Manuel J. Castro]{Dpto. An\'alisis Matem\'atico, Estad\'istica e Investigaci\'on Operativa y Matem\'atica Aplicada. Universidad de M\'alaga. \\
Bulevar Louis Pasteur, 31, 29010 M\'alaga, Spain}
\curraddr{}
\email{mjcastro@uma.es}
\thanks{}

\author{Serafim Kalliadasis}
\address[Serafim Kalliadasis]{Department of Chemical Engineering, Imperial College London, SW7 2AZ, UK}
\curraddr{}
\email{s.kalliadasis@imperial.ac.uk}
\thanks{}

\author{Sergio P. Perez}
\address[Sergio P. Perez]{Departments of Chemical Engineering and Mathematics, Imperial College London, SW7 2AZ, UK}
\curraddr{}
\email{sergio.perez15@imperial.ac.uk}
\thanks{}

\begin{abstract}
We propose high-order well-balanced finite-volume schemes for a broad class
of hydrodynamic systems with attractive-repulsive interaction forces and
linear and nonlinear damping. Our schemes are suitable for free energies
containing convolutions of an interaction potential with the density, which
are essential for applications such as the Keller-Segel model, more general
Euler-Poisson systems, or dynamic-density functional theory. Our schemes
are also equipped with a nonnegative-density reconstruction which allows
for vacuum regions during the simulation. We provide several prototypical
examples from relevant applications highlighting the benefit of our
algorithms elucidate also some of our analytical results.
\end{abstract}

\maketitle


%
%
%
%

\section{Introduction}\label{sec:intro}

Well-balanced schemes have emerged as a paramount tool to simulate systems
governed by balance/conservation laws. This is due to their ability to
numerically preserve steady states and resolve small perturbations of those
states even with coarse meshes. Well-balanced schemes were introduced nearly
three decades ago, with the initial works by Berm\'udez and V\'azquez
\cite{bermudez1994upwind}, Greenberg and Leroux \cite{greenberg1996well} and
Gosse \cite{gosse2000well}. One of the most popular applications for
well-balanced schemes from the beginning has been the shallow-water
equations. Further contributions of note are the hydrostatic reconstruction
in \cite{audusse2004fast,castro2007well} together with application scenarios
where well-balanced schemes have proven quite successful: tsunami propagation
\cite{castro2019third}, coastal hydrodynamics \cite{marche2007evaluation} and
irregular topographies \cite{gallardo2007well}, to name but a few. Inspired
by the strong results for the shallow-water equations, plenty of authors have
successfully employed well-balanced schemes in a plethora of balance-law
problems from wave propagation in elastic media \cite{xing2006high} and
chemosensitive movement of cells \cite{filbet2005approximation} to flow
through a nozzle \cite{gascon2001construction} and the Euler equations with
gravity \cite{klingenberg2019arbitrary,thomann2019second}. Recently, a general procedure to construct high-order well-balanced schemes for 1D balance laws is described in \cite{castro2020}.

In our previous work \cite{carrillo2018wellbalanced} we extended the
applicability of well-balanced schemes to the broad class of hydrodynamic
models with attractive-repulsive interaction forces. In particular we
considered interactions associated with nonlocal convolutions or functions of
convolutions, which is commonplace in applications such as the Keller-Segel
model \cite{calvez2017equilibria}, more general Euler-Poisson systems
\cite{hadvzic2019class} or in dynamic-density functional theory (DDFT)
\cite{goddard2012general,goddard2012unification}. This class of balance laws
may contain linear or nonlinear damping effects, such as the Cucker-Smale
alignment term in collective behaviour \cite{cucker2007emergent}. The
corresponding hydrodynamic systems have the general form
\begin{equation}\label{eq:generalsys}
\begin{cases}
\partial_{t}\rho+\nabla\cdot\left(\rho \bm{u}\right)=0,\quad \bm{x}\in\R^{d},\quad t>0,\\[2mm]
{\displaystyle \pa_{t}(\rho \bm{u})\!+\!\nabla\!\cdot\!(\rho \bm{u}\otimes \bm{u})\!=-\rho \nabla \frac{\delta \mathcal{F}[\rho]}{\delta \rho}-\gamma\rho \bm{u}-\!\rho\!\!\int_{\R^{d}}\!\!\psi(\bm{x}-\bm{y})(\bm{u}(\bm{x})-\bm{u}(\bm{y}))\rho(\bm{y})\,d\bm{y}
,}
\end{cases}
\end{equation}
where the free energy functional $\mathcal{F}[\rho]$ contains the pressure
$P(\rho)$ and generic potential terms $H(\bm{x},\rho)$ and can be decomposed
as
\begin{equation*}
-\rho \nabla \frac{\delta \mathcal{F}}{\delta \rho}=-\nabla P(\rho)-\rho \nabla H(\bm{x},\rho).
\end{equation*}
The potential terms $H(\bm{x},\rho)$ involve an external field $V(x)$ and an interaction potential $W(x)$ convoluted with the density $\rho$, so that
\begin{equation*}\label{eq:pot}
H(\bm{x},\rho)=V(\bm{x})+W(\bm{x})\star \rho.
\end{equation*}
Finally, the free-energy functional has the form
\begin{equation}\label{eq:freeenergy}
  \mathcal{F}[\rho]=\int_{\R^{d}}\Pi(\rho)d\bm{x}+\int_{\R^{d}}V(\bm{x})\rho(\bm{x})d\bm{x}+\frac{1}{2}\int_{\R^{d}}\int_{\R^{d}}W(\bm{x}-\bm{y})\rho(\bm{x})\rho(\bm{y}) d\bm{x} d\bm{y},
\end{equation}
where $\rho \Pi''(\rho)=P'(\rho)$.
The steady states of the system \eqref{eq:generalsys}, whose preservation at the discrete level is the main aim of the design of well-balanced schemes, are characterized by
\begin{equation}\label{eq:steadyvarener}
\frac{\delta \mathcal{F}}{\delta \rho}=\Pi'(\rho)+H(\bm{x},\rho)= \text{constant on each connected component of}\ \mathrm{supp}(\rho)\ \text{and}\ u=0,
\end{equation}
where the constant can vary on different connected components of
$\mathrm{supp}(\rho)$. These steady states are stationary ($u=0$) due to the dissipation of the linear damping $-\gamma\rho u$ or nonlinear damping in the system \eqref{eq:generalsys}, which ensures that the momentum eventually vanishes. This is due to the fact that the total energy of the system, defined as the sum of kinetic energy and free energy,
\begin{equation}\label{eq:totalenergy}
  E(\rho,\bm{u})=\int_{\R^{d}}\frac{1}{2}\rho \left|\bm{u}\right|^2 d\bm{x}+\mathcal{F}(\rho),
\end{equation}
is formally dissipated (see \cite{giesselmann2017relative,carrillo2017weak,carrillo2018longtime}) as
\begin{equation}\label{eq:equalenergy}
  \frac{dE(\rho,\bm{u})}{dt}=-\gamma\int_{\R^{d}}\rho \left|\bm{u}\right|^2 d\bm{x}-\!\int_{\R^{d}}\int_{\R^{d}}\!\!\psi(\bm{x}-\bm{y})\left|\bm{u}(\bm{y})-\bm{u}(\bm{x})\right|^2 \rho(\bm{x})\,\rho(\bm{y})\,d\bm{x}\,d\bm{y}.
\end{equation}

Furthermore, the system \eqref{eq:generalsys} also satisfies an entropy identity
\begin{equation}\label{eq:entrineq}
 \pa_{t} \eta(\rho, \rho \textbf{u})+ \nabla \cdot \bm{G}(\rho, \rho \textbf{u})= -\rho \bm{u}\cdot \nabla H(\bm{x},\rho)-\gamma\rho \left|\bm{u}\right|^2-\rho \int_{\R^{d}}\!\!\psi(\bm{x}-\bm{y})\,\bm{u}(\bm{x}) \cdot (\bm{u}(\bm{x})-\bm{u}(\bm{y}))\rho(\bm{y})\,d\bm{y},
\end{equation}
where $\eta(\rho,\rho\bm{u})$ and $\bm{G}(\rho,\rho\bm{u})$ are the entropy and the entropy flux defined as
\begin{equation*}
\eta(\rho, \rho \bm{u})=\rho \frac{\left|\bm{u}\right|^2}{2}+\Pi (\rho), \quad \bm{G} (\rho, \rho \bm{u})=\rho \bm{u}\left(\frac{\left|\bm{u}\right|^2}{2}+\Pi' (\rho) \right).
\end{equation*}

In addition to the well-balanced property, many authors have sought to
construct numerical schemes that preserve the structural properties of the
system \eqref{eq:generalsys} during its temporal evolution. These endeavours
have aimed to first satisfy discretely the entropy identity
\eqref{eq:entrineq} and second the dissipation relations for the total energy
in \eqref{eq:equalenergy}, both for the original system \eqref{eq:generalsys}
and its overdamped versions. We refer the reader to Refs.
\cite{tadmor2016entropy,fjordholm2012arbitrarily,audusse2015very} for more information about
entropy stable schemes and to Refs. \cite{bailo1811fully,carrillo2015finite}
for insights details and useful insights energy dissipating schemes. The
well-balanced finite-volume scheme of our previous work
\cite{carrillo2018wellbalanced} was designed to be at the same time
well-balanced, entropy stable and energy dissipating, though only for first-
and second-order accuracy.

The main contribution of the present work is to extend our previous scheme
for the system \eqref{eq:generalsys} from first and second order to high
order. Several authors have already proposed high-order well-balanced schemes
for systems where the potential terms in the free energy
\eqref{eq:freeenergy} are local, such as the shallow-water equations
\cite{castro13,castro2017well,noelle2006well,chertock2018well,chalons2018high}, chemotaxis
\cite{filbet2005approximation} and other applications \cite{xing2006high}.
These schemes rely on finite differences, finite volumes or discontinuous
Galerkin approaches, and specially for the shallow-water case there have been
plenty of contributions devoted to particular configurations and scenarios:
presence of dry areas and bottom topography \cite{gallardo2007well}, tsunami
propagation in 2D meshes \cite{castro2019third}, traffic flow model \cite{chalons2018high}, moving steady states
\cite{noelle2007high,castro13,cheng2019new}, etc.

Here we consider the much broader class of free energies in
\eqref{eq:freeenergy}, which include interaction potentials leading to forces
given by convolution with the density $\rho$ and possible linear or nonlinear
damping effects from the field of collective behaviour. Applications of this
type include the Keller-Segel model, generalized Euler-Poisson systems
\cite{hadvzic2019class} and DDFT
\cite{goddard2012general,goddard2012unification}. We complement our
high-order finite volume schemes with the desired properties of
well-balancing and the nonnegativity of the density, which allows for vacuum
regions in the simulations. Our work lays the foundations for the
construction of well-balanced high-order schemes that may satisfy further
fundamental properties of the system \eqref{eq:generalsys}, such as the
discrete versions of the energy dissipation in \eqref{eq:equalenergy} and
entropy identity in \eqref{eq:entrineq}, or even the well-balanced property
for the challenging moving steady states. Developing schemes, amongst the
class of positivity-preserving high-order schemes introduced in the present
work, satisfying also the entropy stability and energy dissipating
properties, is a challenging open question.

The paper is structured as follows. First, in Section \ref{sec:numsch} we
explain the construction of our well-balanced high-order finite volume
scheme. In Subsection \ref{subsec:first_order} we begin by recalling our first-order numerical scheme from \cite{carrillo2018wellbalanced}, and then in Subsection \ref{subsec:high_order} we provide an first-attempt extension of such scheme to high order. The correct well-balanced formulation for that high-order scheme is provided in Subsection \ref{subsec:high_WB}. The last Subsection \ref{subsec:algorithm} contains the summarized algorithmic implementation of the scheme. Second, in Section \ref{sec:numsim} we depict
a battery of simulations for relevant applications of system
\eqref{eq:generalsys}. In Subsection \ref{subsec:validation} we numerically check the well-balanced property and high-order accuracy of our scheme,
and subsequently in Subsection \ref{subsec:numexperiments} we tackle
applications for varied choices of the free energy, leading to interesting
numerical experiments for which analytical results are limited in the
literature.

%
%
%
%
\section{High-order well-balanced finite volume scheme}\label{sec:numsch}
The different terms of the one-dimensional system \eqref{eq:generalsys} are
usually gathered in the form of
\begin{equation}\label{eq:compactsys}
\pa_t U + \pa_x F(U) = S_H(U,H)+S_D(x,U),
\end{equation}
with
\begin{equation*}
U=\begin{pmatrix}
\rho \\ \rho u
\end{pmatrix}, \quad
F(U)=\begin{pmatrix}
\rho u \\ \rho u^2+P(\rho)
\end{pmatrix}
\end{equation*}
and
\begin{equation*}
S_H(U,H)=\begin{pmatrix}
0 \\ -\rho \pa_x H
\end{pmatrix}, \quad
S_D(x,U)=\begin{pmatrix}
0 \\ -\gamma\rho u-\rho \displaystyle\int_{\R}\psi(x-y)(u(x)-u(y))\rho(y)\,dy
\end{pmatrix},
\end{equation*}
where $U$ are the unknown variables, $F(U)$ the fluxes, and $S_H(U,H)$ and $S_D(x,U)$ are the
sources related to forces with potential $H$ and damping terms respectively.  In what follows, we only consider the source term $S_H(U,H)$ due to the forces as we focus on the definition of a well-balanced high-order scheme for stationary solutions \eqref{eq:steadyvarener}. At the end of this Section we propose a high-order discretization of the source damping term $S_D(x,U)$ that vanishes at stationary states.

We consider a mesh composed by cells $[x_{i-1/2}, x_{i+1/2}]$, $1 \leq i \leq N$, whose length $\Delta x$ is supposed to be constant for simplicity.  Let us denote by $U_i(t)$ the approximation of the average of the exact solution at the $i$th cell, $[x_{i-1/2}, x_{i+1/2}]$ at time $t$,
\begin{equation}\label{eq:cellU}
U_i(t)=\sum_{j=1}^{n_s} \alpha_j U(x_i^j,t) \cong \frac{1}{\Delta x} \int_{x_{i-1/2}}^{x_{i+1/2}} U(x,t) \, dx,
\end{equation}
and we denote by $H_i(t)$ the approximation of the cell average of $H(x,\rho)=V(x)+W(x)\star \rho$ at time $t$,
\begin{equation*}
H_i(t) =\sum_{j=1}^{n_s} \alpha_j H(x_i^j, \rho(x_i^j, t)) \cong \frac{1}{\Delta x} \int_{x_{i-1/2}}^{x_{i+1/2}} H(x,\rho(x,t)) \, dx.
\end{equation*}
In the previous expressions we denote as $\alpha_j$ and $x_i^j$, for $j=1,\cdots, n_s$, the weights and quadrature points of a particular high-order quadrature formula for the cell $[x_{i-1/2}, x_{i+1/2}]$. In this work we employ the fifth-order standard Gaussian quadrature described in appendix \ref{app:gauss}.

As pointed out in the introduction, one of the main contributions of this work is to construct high-order well-balanced schemes for free energies that may depend on the convolution of the density and an interaction potential, $W(x)\star \rho$. These convolutions are included in the steady state relations in \eqref{eq:steadyvarener}, but for the discrete version of these relations one has to approximate the convolutions by a high-order quadrature formula. In the next definition we clarify the concept of well-balanced scheme for this kind of free energies.

\begin{definition}[Well-balanced scheme]\label{def:WB} We consider a semi-discrete method to approximate \eqref{eq:compactsys},
\begin{equation}\label{eq:wb}
\left\{
\begin{array}{ll}
    \displaystyle{\frac{dU_i}{dt}}=&\displaystyle{-\frac{1}{\Delta x} \mathcal{H}(\Delta x, U_j(t), H_j(t), j \in \mathcal{S}_i)}, \\[0.2cm]
    U(0)=&U_0,
\end{array}
\right.
\end{equation}
where $U(t)=\{U_i(t) \}_{i=1}^N$ represents the vector of the approximations of the averaged values of the exact solutions at time $t$, $U_0=\{U_i(0)\}$ is the vector of  the initial conditions, and $\mathcal{S}_i$ the stencil of the numerical scheme.

Now let us assume that $u(x)=0$, $\rho(x)$ is a smooth function and $H_{\Delta x}(x)$ is a discrete approximation of $H=V+W\star\rho$ with the form
\[
H_{\Delta x}(x)=V(x)+\Delta x\sum_{l=1}^M\sum_{m=1}^{n_s}\alpha_m W(x-x_l^m) \rho(x_l^m)
\]
and satisfying
\begin{equation}\label{eq:discretewb}
    \Pi'(\rho(x))+H_{\Delta x}(x)=C_\Gamma \mbox{ in each } \Lambda_\Gamma, \Gamma \in \mathbb{N},
\end{equation}
where $\Lambda_\Gamma$, $\Gamma\in\mathbb{N}$, denotes the possible infinite
sequence indexed by $\Gamma$ of subsets $\Lambda_\Gamma$ of subsequent
indices $i\in\left\{1,\ldots,N\right\}$ where $\rho(x)>0$ and $u=0$, and
$C_\Gamma$ the corresponding constant in that connected component of the
discrete support.

Then it follows that the semi-discrete numerical scheme \eqref{eq:wb} is said to be well-balanced for
\[
U=\left(
\begin{array}{c}
\rho(x) \\
0
\end{array}
\right) \quad \text{and} \quad H_{\Delta x}
\]
if the vector of their approximated averages is a critical point of \eqref{eq:wb}, i.e.
\[
\mathcal{H}(\Delta x, U_j, H_j, j\in \mathcal{S}_i)=0, \ 1 \leq i \leq N,
\]
where
\[
U_i=\sum_{j=1}^{n_s} \alpha_j U(x_i^j) \quad \text{and} \quad H_i=\sum_{j=1}^{n_s} \alpha_j H_{\Delta x} (x_i^j).
\]

\end{definition}

In what follows we begin by briefly recalling in Subsection \ref{subsec:first_order} the first-order well-balanced scheme for \eqref{eq:compactsys} introduced in \cite{carrillo2018wellbalanced}, which serves as an starting point to construct high-order schemes by employing a high-order reconstruction operator, as described in Subsection \ref{subsec:high_order}. Then in Subsection \ref{subsec:high_WB} we describe how to adapt these high-order schemes so that they are well-balanced in the sense defined in \ref{def:WB}.

%
%

\subsection{First-order numerical scheme}\label{subsec:first_order}
The first-order semi-discrete well-balanced finite-volume scheme  for system \eqref{eq:compactsys} introduced in \cite{carrillo2018wellbalanced} can be written as
\begin{equation}\label{eq:1orderwb}
    \frac{d U_i}{dt}= -\frac{1}{\Delta t} \left(\mathbb{F}_{i+1/2}^- - \mathbb{F}_{i-1/2}^+ \right),
\end{equation}
where $\mathbb{F}_{i+1/2}^\pm$ is defined using a standard consistent numerical flux for the homogeneous system applied to the so-called hydrostatic-reconstructed  states, with an extra term ensuring the consistency of the numerical scheme \eqref{eq:1orderwb} applied to system \eqref{eq:compactsys}, as well as its well-balanced character. In \cite{carrillo2018wellbalanced} the midpoint quadrature formula is used to approximate both the cell-averages of the exact solution and $H_{\Delta x}(x)$. In what follows we suppress the time-dependence in the cell averages for simplicity. We define $\mathbb{F}^\pm_{i+1/2}$ in terms of the cell averages $U_i$, $U_{i+1}$, $H_i$ and $H_{i+1}$ as
\begin{equation} \label{eq:numflux}
    \mathbb{F}_{i+1/2}^\pm( U_i,U_{i+1}, H_i, H_{i+1})=\mathcal{F}(U_{i+1/2}^{HR,-}, U_{i+1/2}^{HR,+}) \pm S_{i+1/2}^{HR,\pm},
\end{equation}
where $\mathcal{F}(U,V)$ is the standard local Lax-Friedrich  numerical flux for the homogeneous system,
\begin{equation}\label{eq:LF}
    \mathcal{F}(U,V)=\frac{1}{2}\lt(F(U)+F(V) - |\lambda(U,V)| (V-U)\rt),
\end{equation}
where $|\lambda(U,V)|$ is a bound for the maximum absolute value of the wave speeds for the Riemann problem with the constant states $U$ and $V$.

In \cite{carrillo2018wellbalanced} we employ the hydrostatic reconstruction firstly introduced in \cite{audusse2004fast} in the context of shallow-water equations. Here we denote the hydrostatic-reconstructed states as $U_{i+1/2}^{HR,\pm}$, and we compute them as follows:
\begin{enumerate}[label=\arabic*)]
    \item Firstly an intermediate state $H_{i+1/2}$ is computed as
    \[
    H_{i+1/2}=\max(H_{i+1}, H_i) .
    \]
    \item Next, we define the hydrostatic-reconstructed states as
    \[
    U_{i+1/2}^{HR,\pm}=\left(
    \begin{array}{c}
    \rho_{i+1/2}^{HR,\pm} \\[0.2cm]
    (\rho u)_{i+1/2}^{HR,\pm}
    \end{array}
    \right),
    \]
    where
\begin{equation}\label{eq:rhointerface}
\begin{gathered}
 \rho_{i+1/2}^{HR,-}=\xi \left(\Pi'\left(\rho_i\right)+H_i-H_{i+1/2}\right),\quad \lt(\rho u\rt)_{i+1/2}^{HR,-}=\rho_{i+1/2}^{HR,-}u_{i},\\[0.2cm]
  \rho_{i+1/2}^{HR,+}=\xi \left(\Pi'\left(\rho_{i+1}\right)+H_{i+1}-H_{i+1/2}\right),\quad \lt( \rho u\rt)_{i+1/2}^{HR,+}=\rho_{i+1/2}^{HR,+} u_{i+1},
\end{gathered}
\end{equation}
with $\xi(s)$ being the inverse function of $\Pi'(s)$ for $s>0$ and $u_i=(\rho u)_i/\rho_i$.
\end{enumerate}

The last ingredients for the flux in \eqref{eq:numflux} are the terms $S_{i+1/2}^{HR,\pm}$, which correspond to the correction introduced in the numerical scheme to guarantee consistency and well-balanced properties (see \cite{audusse2004fast,carrillo2018wellbalanced}),
\begin{equation}\label{eq:fluxsource}
S_{i+1/2}^{HR,+}=\begin{pmatrix}
0 \\ P\left(\rho_{i+1}\right)-P\left(\rho_{i+1/2}^{HR,+}\right)\end{pmatrix} \quad \text{and}\quad S_{i+1/2}^{HR,-}=\begin{pmatrix}
0 \\ P\left(\rho_{i+1/2}^{HR,-}\right)-P\left(\rho_i\right)\end{pmatrix}.
\end{equation}
It is straightforward to check that the semi-discrete numerical scheme \eqref{eq:1orderwb}-\eqref{eq:fluxsource} is well-balanced in the sense defined in definition \ref{def:WB} (see \cite{carrillo2018wellbalanced}).

%
%

\subsection{High-order extension}\label{subsec:high_order} The basic ingredients to design a high-order finite volume method for system \eqref{eq:compactsys}, assuming $S_D(x,U)=0$, are:

\begin{itemize}

\item a consistent first order numerical flux for system \eqref{eq:compactsys}, like the one proposed in \cite{carrillo2018wellbalanced} and described in the previous Subsection;

\item a high-order reconstruction operator, i.e. an operator that, given a family of cell values $\{U_i(t) \}$, provides at every cell $[x_{i-1/2}, x_{i+1/2}]$ a smooth function
that depends on the values at some neighbor cells whose indexes belong to the so-called stencil $\mathcal{S}_i$:
$$
R^U_i(x) = R^U_i (x; \{ U_j(t) \}_{j \in \mathcal{S}_i}),
$$
so that $R^U_i(x)$ is a high-order approximation of $U(x,t)$ in the $i$th cell at time $t$. Here we use  third-
and fifth-order CWENO reconstruction operators \cite{levy1999central,
levy2000compact, capdeville2008central}. The main advantage of CWENO compared to WENO (see \cite{Shu88,shuosh,Shu97}) reconstruction operators is that CWENO reconstructions achieve uniform high-order approximation in the entire cell, while WENO  reconstruction operators are proposed to achieve high-order approximation at the boundaries of the cell. Thus, standard WENO-5 reconstructions achieves $5$th-order at the boundaries of the cell, while it is only $3$rd-order at the interior points. Therefore, CWENO reconstruction operators are specially useful  in balance laws such as \eqref{eq:compactsys}, where the source term has to be evaluated at inner points of the cell. We complement
the CWENO reconstruction operators with the positive-density limiters from
\cite{zhang2010maximum} to ensure physical admissible reconstructed values for the density. For further details we refer the reader to appendix \ref{app:CWENO}.
\end{itemize}

Using these ingredients, one could consider a high-order finite-volume semi-discrete numerical method of the form:
\begin{equation}\label{eq:ho1}
\frac{d U_i}{dt} = -\frac{1}{\Delta x} \left( \mathbb{F}^-_{i+1/2} - \mathbb{F}^+_{i - 1/2}\right) +\frac{1}{\Delta x} \int_{x_{i-1/2}}^{x_{i+1/2}} S_H(R_i^U(x),R_i^H(x)) \, dx,
\end{equation}
where
\begin{itemize}
\item $R_U^t(x)$ and $R_i^H(x)$ are  the approximations of the solution $U(x,t)$ and the function $H_{\Delta x}(x)$, respectively, at the $i$th cell given by some high-order reconstruction operators from the sequence of cell values $\{U_i(t)\}$ and $\{H_i(t)\}$, respectively, i.e.
$$
R_i^U(x) = R_i (x; \{ U_j(t) \}_{j \in \mathcal{S}_i} ) \quad \text{and} \quad R_i^H(x)=R_i(x; \{H_j(t)\}_{j \in \mathcal{S}_i});
$$

\item $\mathbb{F}^\pm_{i+1/2}$ is the numerical flux defined in \eqref{eq:numflux} applied to the reconstructed states $U_{i+1/2}^{\mp}$ and $H_{i+1/2}^{\mp}$, i.e.
$$
\mathbb{F}^\pm_{i+1/2}=\mathbb{F}(U_{i+1/2}^{-}, U_{i+1/2}^{+}, H_{i+1/2}^{-}, H_{i+1/2}^{+})
$$
with
$$
U_{i+1/2}^{-} = R_i^U(x_{i+1/2}), \quad U_{i+1/2}^{+} = R_{i+1}^U(x_{i+1/2}),
$$
and
$$
H_{i+1/2}^{-}=R_i^H(x_{i+1/2}), \quad H_{i+1/2}^{+}=R_{i+1}^H(x_{i+1/2}).
$$
\end{itemize}
One can proof that the semi-discrete numerical scheme \eqref{eq:ho1} is a high-order numerical scheme of order $p>1$, if the following three conditions are satisfied (see \cite{castro2020} and the references there in):

\begin{enumerate}[label=(\roman*)]
    \item $H_{\Delta x}(x)$ is a high-order approximation of $H(x, \rho)$ of order $p>1$;
    \item $R^U_i(x)$ and $R_i^H(x)$ are high-order reconstruction operators of order at least $p>1$;
    \item the volume integral
\[
\frac{1}{\Delta x}\int_{x_{i-1/2}}^{x_{i+1/2}} S_H(R_i^U(x), R_i^H(x)) \, dx
\]
is computed exactly or approximated with a quadrature formula of order greater or equal to $p>1$.
\end{enumerate}

Unfortunately, when employing standard (CWENO, WENO, \ldots) reconstruction the resulting numerical scheme is, in general, not well-balanced. Indeed, if $\{U_i\}_{i=1}^N$ and $\{H_i\}_{i=1}^N$ are the cell averages of a discrete steady state satisfying \eqref{eq:discretewb}, then their reconstructions do not necessary satisfy the discrete relations
\begin{equation}\label{eq:recwb}
 \Pi'\left(R_i^{\rho}(x)\right)+R_i^{H}(x)=C, \ 1 \leq i \leq N.
\end{equation}
In the previous expression we suppose, for simplicity, that we have only one connected component.

In what follows we aim to propose a modified reconstruction procedure which respects \eqref{eq:recwb} for any discrete steady state satisfying \eqref{eq:discretewb}. As we show in the next Subsection, thanks to this modification we can prove that our scheme is both high-order accurate and well-balanced.

%
%

\subsection{High-order well-balanced numerical scheme}\label{subsec:high_WB}

Let us suppose that the sequences of cell averages $\{U_i\}_{i=1}^N$ and $\{K_i\}_{i=1}^N$ are known, with
\[
U_i=\left(
\begin{array}{c}
\displaystyle{\rho_i=\sum_{j=1}^{n_s} \alpha_j\rho(x_i^j)} \\[0.2cm]
\displaystyle{(\rho u)_i=\sum_{j=1}^{n_s}\alpha_j(\rho u)(x_i^j)}
\end{array}
\right)
\]
and
\begin{equation}\label{eq:defK}
    K_i=\sum_{j=1}^{n_s}
\alpha_j
\left[
\Pi'(\rho(x_i^j))+H_{\Delta x}(x_i^j)\right].
\end{equation}
For such cell averages we propose the following reconstruction procedure:
\begin{itemize}
    \item We consider a standard high-order reconstruction operator for the conserved variables $\rho$ and $\rho u$, and also applied to the sequence $\{K_i\}_{i=1}^N$,
    \begin{equation}\label{eq:reconsCWENO}
    \begin{gathered}
    R_i^{\rho}(x)=R_i\left( x,\{\rho_j\}_{j \in S_i}\right), \\[0.2cm]
    R_i^{\rho u}(x)=R_i\left( x, \{(\rho u)_j\}_{j \in S_i}\right),\\[0.2cm]
    R_i^{K}(x)=R_i\left( x,\{K\}_{j \in S_i}\right);
    \end{gathered}
    \end{equation}
    \item the reconstruction operator for $H_{\Delta x}(x)$ is defined as
    \begin{equation}\label{eq:reconsH}
    R_i^H(x)=R_i^K(x)-\Pi'\left( R_i^\rho(x)\right),
    \end{equation}
    with $R_i^H$ not being a polynomial since it depends on the function $\Pi'(\rho)$.
\end{itemize}

The previous reconstruction procedure satisfies the following property:
\begin{theorem}\label{th:recons}
Let $u=0$, $\rho(x)$ and $H_{\Delta x}(x)$ satisfying \eqref{eq:discretewb}, that is $\rho$, $u=0$ is a discrete stationary solution of system \eqref{eq:compactsys}, then the reconstructions $R_i^\rho$(x), $R_i^{\rho  u}(x)$ and $R_i^H(x)$ are discrete stationary solutions of system \eqref{eq:compactsys} at $[x_{i-1/2}, x_{i+1/2}]$.
\end{theorem}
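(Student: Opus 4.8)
The plan is to show that, on the assumed stationary state, the reconstructed triple satisfies the discrete balance \eqref{eq:recwb} together with vanishing momentum, which is exactly what it means for $R_i^\rho$, $R_i^{\rho u}$, $R_i^H$ to be a discrete stationary solution on $[x_{i-1/2},x_{i+1/2}]$. The whole argument rests on the deliberate choice to reconstruct the auxiliary quantity $K$ rather than $H_{\Delta x}$ directly, and then to recover $R_i^H$ through \eqref{eq:reconsH}.

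First I would evaluate the cell averages $K_i$ from \eqref{eq:defK} on the stationary profile. By hypothesis \eqref{eq:discretewb}, the integrand $\Pi'(\rho(x))+H_{\Delta x}(x)$ equals the constant $C_\Gamma$ at every point of the connected component $\Lambda_\Gamma$, in particular at all quadrature nodes $x_i^j$ of any cell contained in $\Lambda_\Gamma$. Since the Gaussian weights reproduce cell averages of constants exactly, i.e. $\sum_{j=1}^{n_s}\alpha_j=1$, it follows that $K_i=C_\Gamma$ for every such cell. Reducing, as already done in the sentence preceding \eqref{eq:recwb}, to a single connected component, the whole sequence $\{K_i\}$ is the constant $C$.

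Next I would invoke the consistency (constant-preserving) property common to all the CWENO and WENO reconstruction operators used here: applied to a constant sequence of cell averages they return that same constant throughout the cell, so $R_i^K(x)\equiv C$ on $[x_{i-1/2},x_{i+1/2}]$. Substituting into the definition \eqref{eq:reconsH} gives $R_i^H(x)=C-\Pi'(R_i^\rho(x))$, whence $\Pi'(R_i^\rho(x))+R_i^H(x)=C$ identically on the cell, which is precisely \eqref{eq:recwb}. I would stress that this identity holds for \emph{whatever} density reconstruction $R_i^\rho$ is produced, in particular after the positivity-preserving limiter of \cite{zhang2010maximum} has acted, because the balance is hard-wired into \eqref{eq:reconsH} and does not depend on the specific form of $R_i^\rho$. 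Finally, since $u=0$ forces $(\rho u)_i=0$ for all $i$, the momentum reconstruction $R_i^{\rho u}(x)$ is built from the zero sequence and therefore vanishes identically, again by constant-preservation; this completes the characterization of a cell-level stationary state.

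Assembling these facts is routine; the only delicate point, which I expect to be the main obstacle, concerns cells near the boundary of a connected component, where the stencil $\mathcal{S}_i$ may reach into a neighbouring component carrying a different constant $C_{\Gamma'}$ or into the vacuum region. There the restriction of $\{K_i\}$ to the stencil is no longer constant, so $R_i^K$ need not be constant and the clean cancellation above can fail. I would handle this exactly through the single-component reduction already adopted for \eqref{eq:recwb}, localizing the claim to stencils that remain inside a fixed $\Lambda_\Gamma$ and deferring the interface and vacuum behaviour to the consistency analysis of the numerical flux described in Subsection \ref{subsec:high_order}.
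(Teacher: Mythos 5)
Your proposal is correct and follows essentially the same route as the paper's own proof: reduce to a single connected component so that $K_i=C$ for all $i$, use the exactness of the CWENO reconstruction on constant sequences to get $R_i^K(x)\equiv C$ and $R_i^{\rho u}(x)\equiv 0$, and then read off $\Pi'(R_i^\rho(x))+R_i^H(x)=C$ directly from the definition \eqref{eq:reconsH}. Your added observations --- that $\sum_j\alpha_j=1$ justifies $K_i=C_\Gamma$, that the identity is insensitive to the particular $R_i^\rho$ (hence compatible with the positivity limiter), and that stencils straddling distinct components require the single-component reduction --- are all consistent with, and slightly more explicit than, the paper's argument.
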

\begin{proof}
Let us suppose for simplicity that the the stationary solution is only defined in one connected component.
Therefore, $K_i=C$, $1\leq i\leq N$.

As standard reconstruction operators like CWENO are exact for constant functions, we have that $R_i^{\rho u}(x)=0$ and $R_i^K(x)=C$. Therefore in \eqref{eq:recwb} we have that $C=\Pi'(R_i^\rho(x))+R_i^H(x)$, which proves the result setting $H_{\Delta x}(x)=R_i^H(x)$.
\end{proof}

It is important to remark that, even if the reconstruction procedure satisfies the discrete steady state of system in \eqref{eq:recwb}, the semi-discrete numerical scheme \eqref{eq:ho1} may not be in general well-balanced. This is because the integral
\[
\int_{x_{i-1/2}}^{x_{i+1/2}} S_H(R_i^U(x), R_i^H(x)) \, dx
\]
has to be numerically approximated, and if such integration is not exact then the well-balancing property may be destroyed (see \cite{castro2020}).  To overcome this difficulty, we follow the strategy proposed in \cite{castro2020}: a local discrete stationary solution is added to the numerical scheme for every cell. We denote this solution by $U_i^*(x)=(\rho_i^*(x), 0)^T$ and $H_i^*(x)$, and it satisfies
\begin{equation}\label{eq:local-steady}
    \frac{1}{\Delta x} \left(F(U_i^*(x_{i+1/2}))-F(U_i^*(x_{i-1/2})) \right)=\frac{1}{\Delta x} \int_{x_{i-1/2}}^{x_{i+1/2}} S_H(U_i^*(x),H_i^*(x))\, dx.
\end{equation}

The previous steady state relation in \eqref{eq:local-steady} is satisfied if we choose $U_i^*(x)=(\rho_i^*(x), 0)^T$ and $H_i^*(x)$ as
\begin{equation}\label{eq:localU}
U_i^*(x)=\left(
\begin{array}{c}
\rho_i^*(x)=R_i^\rho(x) \\
0
\end{array}
\right), \quad H_i^*(x)=K_i-\Pi'(R_i^\rho(x)).
\end{equation}
Observe that the convolution is indirectly approximated in the previous expression.

Now, we could rewrite the semi-discrete numerical scheme \eqref{eq:ho1} by just adding the steady state expression in \eqref{eq:local-steady}, yielding
\begin{equation}\label{eq:hoWB}
\begin{array}{ll}
\displaystyle{\frac{d U_i}{dt}} = &\displaystyle{ -\frac{1}{\Delta x} \left( \mathbb{F}^-_{i+1/2} - \mathbb{F}^+_{i - 1/2} - F(U_i^*(x_{i+1/2}))+F(U_i^*(x_{i-1/2}))\right)}\\[0.3cm]
& \displaystyle{+\frac{1}{\Delta x} \int_{x_{i-1/2}}^{x_{i+1/2}} S_H(R_i^U(x),R_i^H(x))-S_H(U_i^*(x),H_i^*(x)) \, dx}.
\end{array}
\end{equation}

The advantage of this new version of the scheme relies in the fact that the integral term in \eqref{eq:hoWB} could be approximated by any high-order quadrature formula, without perturbing the well-balanced character of the numerical scheme. This comes from the fact that, for any discrete stationary solution satisfying \eqref{eq:discretewb}, we have that $R_i^H(x)=H_i^*(x)$ and $R_i^\rho(x)=\rho_i^*(x)$, so that
\[
S_H(R_i^U(x),R_i^H(x))-S_H(U_i^*(x), H_i^*(x))=0.
\]
For such integral here we follow  \cite{noelle2006well}, where an n-th order Richardson extrapolation formula is proposed to evaluate source terms of the form $\rho \partial_x H$. We detail the fourth- and sixth-order formulas in the Appendix \ref{app:source}. We finally conclude with the following result.

 \begin{theorem} \label{th:WBfinal}
 The numerical scheme \eqref{eq:hoWB} with the reconstruction operators \eqref{eq:reconsCWENO} and \eqref{eq:reconsH} is well-balanced in the sense of Definition \ref{def:WB}.
 \end{theorem}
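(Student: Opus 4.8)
The plan is to take an arbitrary discrete steady state satisfying \eqref{eq:discretewb} and verify that the right-hand side of \eqref{eq:hoWB} vanishes cell by cell. Writing $\mathcal{H}$ as the bracketed flux difference minus the integral term, I would show that each of the two contributions is identically zero. The design of the scheme makes these two cancellations independent, which is exactly what will permit an arbitrary high-order quadrature for the integral without spoiling the balance.

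First I would dispose of the integral term, which is the easy part. By Theorem \ref{th:recons}, a discrete steady state is inherited by the reconstructions: one has $R_i^{\rho u}(x)=0$ and $R_i^K(x)=K_i=C$ on each connected component, so that $R_i^H(x)=R_i^K(x)-\Pi'(R_i^\rho(x))=C-\Pi'(R_i^\rho(x))=H_i^*(x)$, while $R_i^\rho(x)=\rho_i^*(x)$ holds by the very definition \eqref{eq:localU}. Consequently the two source evaluations coincide pointwise,
\[
S_H(R_i^U(x),R_i^H(x))-S_H(U_i^*(x),H_i^*(x))=0 \quad \text{for all } x\in[x_{i-1/2},x_{i+1/2}],
\]
so the integrand, and hence the integral under any quadrature, vanishes exactly.

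The heart of the proof is the bracketed flux difference, and this is the step I expect to be the main obstacle, since it requires tracing the nonlinear hydrostatic reconstruction through the numerical flux. At the interface $x_{i+1/2}$ the reconstructed one-sided states are $(R_i^\rho(x_{i+1/2}),0)$ and $(R_{i+1}^\rho(x_{i+1/2}),0)$, with associated potentials $R_i^H(x_{i+1/2})$ and $R_{i+1}^H(x_{i+1/2})$. Because $\Pi'(\rho)+H\equiv C$ holds for both reconstructions, the argument of $\xi$ in \eqref{eq:rhointerface} equals $C-H_{i+1/2}$ on both sides, whence $\rho_{i+1/2}^{HR,-}=\rho_{i+1/2}^{HR,+}=:\rho_{i+1/2}^{HR}$ and both reconstructed momenta vanish. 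The two hydrostatic states thus coincide, the dissipation term in the Lax--Friedrichs flux \eqref{eq:LF} drops out, and $\mathcal{F}(U_{i+1/2}^{HR,-},U_{i+1/2}^{HR,+})=(0,P(\rho_{i+1/2}^{HR}))^{T}$. Adding the corrections $S_{i+1/2}^{HR,\pm}$ from \eqref{eq:fluxsource}, with $\rho_i,\rho_{i+1}$ replaced by the reconstructed interface densities, telescopes $P(\rho_{i+1/2}^{HR})$ away and leaves
\[
\mathbb{F}^-_{i+1/2}=\left(0,\,P(R_i^\rho(x_{i+1/2}))\right)^{T}=F(U_i^*(x_{i+1/2})),\qquad \mathbb{F}^+_{i+1/2}=\left(0,\,P(R_{i+1}^\rho(x_{i+1/2}))\right)^{T}=F(U_{i+1}^*(x_{i+1/2})).
\]

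With these identities the bracketed term becomes $F(U_i^*(x_{i+1/2}))-F(U_i^*(x_{i-1/2}))-F(U_i^*(x_{i+1/2}))+F(U_i^*(x_{i-1/2}))=0$, so $dU_i/dt=0$ for every $i$ and the scheme is well-balanced in the sense of Definition \ref{def:WB}. The only remaining bookkeeping is the multi-component case in which $C=C_\Gamma$ varies across the connected components $\Lambda_\Gamma$; as in Theorem \ref{th:recons} the argument localizes inside each component, the extra care being that near a vacuum interface separating two components the positive-density reconstruction and the $\max$ in the hydrostatic step keep the one-sided contributions decoupled, so that the telescoping above is unaffected.
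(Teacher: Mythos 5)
Your proposal is correct and follows essentially the same route as the paper's proof: the integral term vanishes pointwise by Theorem \ref{th:recons} together with the definition \eqref{eq:localU}, the steady-state relation forces the two hydrostatic-reconstructed states at each interface to coincide so the Lax--Friedrichs dissipation drops out, and the corrections $S_{i+1/2}^{HR,\pm}$ telescope the flux down to $\bigl(0,P(\rho_{i+1/2}^{\mp})\bigr)^{T}=F(U_i^*(x_{i\pm1/2}))$, cancelling the added local stationary fluxes. Your additional remarks on the multi-component case and on why arbitrary quadrature is harmless are consistent with, and slightly more explicit than, the paper's argument.
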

 \begin{proof}
 Let us suppose that $u=0$, $\rho(x)$, $H_{\Delta x}(x)$ satisfy \eqref{eq:discretewb}. We also assume for simplicity that the stationary solution is defined on one connected component.  Then, as proved in Theorem \eqref{th:recons}, the reconstructions $R_i^\rho(x)$, $R_i^{\rho u}(x)$ and $R_i^H(x)$ satisfy \eqref{eq:recwb}. Moreover,
 \[
 \frac{1}{\Delta x} \int_{x_{i-1/2}}^{x_{i+1/2}} S_H(R_i^U(x),R_i^H(x))-S_H(U_i^*(x),H_i^*(x)) \, dx=0,
 \]
 and the reconstructed states at the intercells verify
 \begin{equation}\label{eq:probWB1}
 \Pi'(\rho^+_{i+1/2})+H^+_{i+1/2}=\Pi'(\rho^-_{i+1/2})+H^-_{i+1/2}=C, \quad u_{i+1/2}^-=u_{i+1/2}^+=0.
 \end{equation}
 The relation \eqref{eq:probWB1} implies that the hydrostatic-reconstructed states satisfy
 \begin{equation}\label{eq:probWB2}
 \rho_{i+1/2}^{HR,-}=\rho_{i+1/2}^{HR,+}, \quad (\rho u)_{i+1/2}^{HR,-}=(\rho u)_{i+1/2}^{HR,+}=0.
 \end{equation}
 Using \eqref{eq:probWB1} and \eqref{eq:probWB2} and the definition of \eqref{eq:numflux} and \eqref{eq:fluxsource}, $\mathbb{F}_{i+1/2}^-$ reduces to
 \[
 \mathbb{F}_{i+1/2}^-=\left(
 \begin{array}{c}
 0 \\
 P(\rho_{i+1/2}^-)
 \end{array}
 \right).
 \]
 Analogously, we deduce that
 \[
 \mathbb{F}_{i-1/2}^+=\left(
 \begin{array}{c}
 0 \\
 P(\rho_{i-1/2}^+)
 \end{array}
 \right).
 \]
 Now, taking into account the definition of $U_i^*(x)$ given in \eqref{eq:localU},
 \[
 U_i^*(x_{i+1/2})=\left(
 \begin{array}{c}
      \rho_{i+1/2}^- \\
      0
 \end{array}
 \right), \quad U_i^*(x_{i-1/2})=\left(
 \begin{array}{c}
      \rho_{i-1/2}^+ \\
      0
 \end{array}
 \right),
 \]
 we finally conclude by noting that
\[
\frac{1}{\Delta x}\left( \mathbb{F}^-_{i+1/2} - \mathbb{F}^+_{i - 1/2} - F(U_i^*(x_{i+1/2}))+F(U_i^*(x_{i-1/2}))\right)=0.
\]
\end{proof}

\begin{remark}
The well-balanced reconstruction operators defined in \eqref{eq:reconsCWENO} and \eqref{eq:reconsH} employ, as expected, the approximated cell averages of the solution at each time step and an extra quantity corresponding to the cell average of the variation of the free-energy, denoted by $K_i$. This quantity plays an important role to achieve the well-balanced property of the reconstruction operators and the final numerical scheme.  Note that the semi-discrete numerical scheme \eqref{eq:hoWB} only allows to evolve in time the cell averages of conserved variables, and as a result we should provide an extra equation to evolve the variation of the free-energy. We propose the following: suppose that $\{\rho_i^n\}$ $\{(\rho u)_i^n\}$ and $\{K^n_i\}$ are known at time $t=n\Delta t$, and suppose in addition that we use the standard explicit first order Euler scheme to evolve the conserved variables up to time $t=(n+1)\Delta t$. Then we propose to update $K_i^{n+1}$ as
\begin{align}
K_i^{n+1}=\,&K_i^{n}+\sum_{j=1}^{n_s} \alpha_j \left[\Pi'\left(R_i^{\rho^{n+1}}(x_i^j)\right)-\Pi'\left(R_i^{\rho^n}(x_i^j)\right) \right]\nonumber\\
& +\sum_{j=1}^{n_s} \alpha_j \left[\sum_{l=1}^{M} \sum_{m=1}^{n_s} \Delta x\,\alpha_m W\left(x_i^j-x_l^m\right) \left[R_l^{\rho^{n+1}} (x_l^m) - R_l^{\rho^n}(x_l^m)\right]\right],
\label{eq:Kn+1}
\end{align}
where we use some high-order quadrature formula for the cell averages and convolution operator. Here we will use the fifth-order Gaussian quadrature described in Appendix \ref{app:gauss}. A similar procedure could be applied if a high-order RK-TVD scheme (see \cite{gottlieb1998total}) is used instead of the explicit Euler scheme to discretize the ODE system \eqref{eq:hoWB}. This is based on the classical observation that these schemes can be written as linear combinations of explicit Euler steps. Observe also that $K_i^{n+1}=K_i^n$ on discrete stationary solutions satisfying \eqref{eq:discretewb}.

\end{remark}

Finally, if the term $S_D(x,U)$ is now added to the system, our full high-order semi-discrete well-balanced finite volume scheme can be written as
\begin{align}
\frac{d U_i}{dt} = & -\frac{1}{\Delta x} \left( \mathbb{F}^-_{i+1/2} - \mathbb{F}^+_{i - 1/2} - F(U_i^*(x_{i+1/2}))+F(U_i^*(x_{i-1/2}))\right)\nonumber\\
& +\frac{1}{\Delta x} \int_{x_{i-1/2}}^{x_{i+1/2}} S_H(R_i^U(x),R_i^H(x))-S_H(U_i^*(x),H_i^*(x)) \, dx \label{eq:hofinal} \\
& - \gamma (\rho u)_i + \frac{1}{\Delta x} \sum_{j=1}^{n_s} \alpha_j R_i^{\rho,t}(x_i^j) \left[ \sum_{l=1}^{M} \sum_{m=1}^{n_s} \Delta x\,\alpha_m \psi (x_i^j-x_l^m)\left(R_i^{u,t}(x_i^j)-R_l^{u,t}(x_l^m)\right) R_l^{\rho,t}(x_l^m) \right] ,\nonumber
\end{align}
where $\displaystyle{R_i^{u,t}=\frac{R_i^{\rho u, t}}{R_i^{\rho,t}}}$. Note that the new terms do not affect to the well-balance property of the scheme as they vanishes when $u=0$.

\begin{remark}
In practical applications is quite important to guarantee that the numerical scheme preserves the non-negativity of the density $\rho_i(t)$. The high-order numerical scheme \eqref{eq:hofinal} preserves the non-negativity of the density as consequence of:
\begin{enumerate}
    \item the first order numerical flux preserves the non-negativity of the density (see \cite{carrillo2018wellbalanced});
    \item the application of the positive-density limiter introduced in \cite{zhang2010maximum} and described in appendix \ref{subsec:positiveCWENO};
    \item a suitable CFL restriction (see \eqref{eq:CFL} and \cite{zhang2010maximum}), described also in appendix \ref{subsec:positiveCWENO}.
\end{enumerate}
\end{remark}

%
%
\subsection{Algorithmic implementation of the scheme}\label{subsec:algorithm}

In this Subsection we summarize the steps to efficiently implement the high-order well-balanced finite-volume scheme of Subsection \ref{subsec:high_WB}.

The initial conditions for system \eqref{eq:compactsys} are the initial
density profile $\rho_0(x)$ and momentum profile $\lt(\rho u\rt)_0 (x)$.
These initial conditions are introduced in the numerical scheme by computing
their cell averages via high-order quadrature formula,
\begin{equation*}
\begin{gathered}
\rho_i^0= \sum_{j=1}^{n_{s}} \alpha_j \rho_0 (x_i^j),\\
\lt(\rho u\rt)_i^0= \sum_{j=1}^{n_{s}} \alpha_j (\rho u)_0 (x_i^j),
\end{gathered}
\end{equation*}
where the coefficients $\alpha_j$ denote the weights of the quadrature
formula that multiply the evaluation of $\rho_0(x)$ and $\lt(\rho u\rt)_0(x)$
at the quadrature points $x_i^j$, and $n_{s}$ denotes the number of quadrature points. Here we employ the fifth-order Gaussian quadrature formula described in the Appendix \ref{app:gauss}.

The initial cell averages of the derivative of the free energy \eqref{eq:discretewb} are also required, and are similarly computed via fifth-order Gaussian quadrature
\begin{equation*}
K_i^0=\sum_{j=1}^{n_{s}} \alpha_j \left[ \Pi'\left(\rho_0(x_i^j)\right)+V(x_i^j )+\sum_{l=1}^{n} \sum_{m=1}^{n_{s}} \Delta x \alpha_m W\left(x_i^j-x_l^m\right) \rho_0(x_l^m)\right].
\end{equation*}

The computation of the cell averages using quadrature formulas is only necessary at the initial time step of the algorithm. For further time steps, the algorithm presented here takes as inputs the cell averages $\rho_i^n$, $\lt(\rho u\rt)_i^n$ and $K_i^n$, evaluated at $t=n \Delta t$, and directly returns the cell averages $\rho_i^{n+1}$, $\lt(\rho u\rt)_i^{n+1}$ and $K_i^{n+1}$ at the subsequent time step  $t=(n+1) \Delta t$. The steps for such algorithm are:
\begin{enumerate}[label=\arabic*)]
\itemsep1em
\item Perform high-order reconstructions $R_i^{\rho}(x)$, $R_i^{\rho u}(x)$ and $R_i^{K}(x)$ from the sequences of cell values $\left\{\rho_i^n\right\}$, $\left\{\lt(\rho u\rt)_i^n\right\}$, $\left\{K_i^n\right\}$, following \eqref{eq:reconsCWENO}. In our case, such reconstructions are conducted via third-
and fifth-order CWENO reconstructions \cite{levy1999central,
levy2000compact, capdeville2008central}, details
provided in Appendix \ref{app:CWENO}.

For simplicity, the evaluations of the previous reconstructions at the intercells at time $t=n\Delta t$ are denoted as
\begin{equation}\label{eq:recons_b}
\begin{gathered}
\rho_{i-1/2}^+=R_{i}^{\rho}(x_{i-1/2}),\quad \rho_{i+1/2}^-=R_{i}^{\rho}(x_{i+1/2}),\\[6pt]
\lt(\rho u\rt)_{i-1/2}^+=R_{i}^{\rho u}(x_{i-1/2 }),\quad \lt(\rho u\rt)_{i+1/2}^-=R_{i}^{\rho u}(x_{i+1/2}),\\[6pt]
K_{i-1/2}^+=R_{i}^{K}(x_{i-1/2}),\quad K_{i+1/2}^-=R_i^{K}(x_{i+1/2}).
\end{gathered}
\end{equation}

Furthermore, when required, the reconstruction of the velocity field $u$ is computed as
\begin{equation*}
    R_i^{u}(x)=\frac{R_i^{\rho u}(x)}{R_i^{\rho}(x)},\quad u_{i-1/2}^+=R_{i}^{ u}(x_{i-1/2}),\quad u_{i+1/2}^-=R_{i}^{u}(x_{i+1/2}).
\end{equation*}

\item Obtain the reconstruction $R_i^H(x)$ for $H_{\Delta x}(x)$ from \eqref{eq:reconsH}, and their evaluations at the intercells as
\begin{equation*}
H_{i-1/2}^+=R_{i}^{H}(x_{i-1/2}),\quad H_{i+1/2}^-=R_i^{H}(x_{i+1/2}).
\end{equation*}
In general $H_{i+1/2}^+\neq H_{i+1/2}^-$. The average value between them is taken as
\begin{equation*}
H_{i+1/2}=\max\left(H_{i+1/2}^+,H_{i+1/2}^-\right).
\end{equation*}

\item Reconstruct the local discrete stationary solution in \eqref{eq:local-steady} for every cell, so that $U_i^*(x)$ and $H_i^*(x)$ are computed from \eqref{eq:localU}.

\item Perform the so-called hydrostatic reconstruction described in \eqref{eq:rhointerface}, but now with the high-order reconstructions at the intercells. By
    denoting as $\xi(s)$ the inverse function of $\Pi'(s)$ for $s>0$,
\begin{equation*}
\begin{gathered}
 \rho_{i+1/2}^{HR,-}=\xi \left(\Pi'\left(\rho_{i+1/2}^-\right)+H_{i+1/2}^--H_{i+1/2}\right),\quad \lt(\rho u\rt)_{i+1/2}^{HR,-}=\rho_{i+1/2}^{HR,-}u_{i+1/2}^-,\\
  \rho_{i+1/2}^{HR,+}=\xi \left(\Pi'\left(\rho_{i+1/2}^+\right)+H_{i+1/2}^+-H_{i+1/2}\right),\quad \lt( \rho u\rt)_{i+1/2}^{HR,+}=\rho_{i+1/2}^{HR,
 +} u_{i+1/2}^+.
\end{gathered}
\end{equation*}

\item The cell averages $\rho_i^{n+1}$ and $\lt(\rho u\rt)_i^{n+1}$ at the subsequent time step $t=(n+1)\Delta t$ are updated by means of \eqref{eq:hofinal},
where
\begin{equation} \label{eq:numflux_2}
    \mathbb{F}_{i+1/2}^\pm( U_{i+1/2}^{HR,-},U_{i+1/2}^{HR,+}, H_{i+1/2}^-, H_{i+1/2}^+)=\mathcal{F}(U_{i+1/2}^{HR,-}, U_{i+1/2}^{HR,+}) \pm S_{i+1/2}^{HR,\pm},
\end{equation}
with the Lax-Friedrich flux in \eqref{eq:LF},
\begin{equation}\label{eq:fluxsource_2}
S_{i+1/2}^{HR,+}=\begin{pmatrix}
0 \\ P\left(\rho_{i+1/2}^+\right)-P\left(\rho_{i+1/2}^{HR,+}\right)\end{pmatrix},\quad  S_{i+1/2}^{HR,-}=\begin{pmatrix}
0 \\ P\left(\rho_{i+1/2}^{HR,-}\right)-P\left(\rho_{i+1/2}^-\right)\end{pmatrix},
\end{equation}
\begin{equation}\label{eq:fluxhom_2}
F\left(U_{i}^*(x_{i-1/2})\right)=\begin{pmatrix}
0 \\ P\left(\rho_{i-1/2}^+\right) \end{pmatrix},\quad  F\left(U_{i}^*(x_{i+1/2})\right)=\begin{pmatrix}
0 \\ P\left(\rho_{i+1/2}^-\right) \end{pmatrix},
\end{equation}
and the integral for the high-order corrections in the source term is computed from the fourth- and sixth-order formulas in the Appendix \ref{app:source}.

\item Finally, update the value $K_i^{n+1}$ by means of fluctuations from \eqref{eq:Kn+1}.

\end{enumerate}

%
%

\section{Numerical simulations}\label{sec:numsim}

Here we employ the high-order finite volume scheme we developed in Section
\ref{sec:numsch} in a variety of relevant applications, taken from the fields
of gas dynamics, porous media, collective behavior and chemotaxis. First, in
Subsection \ref{subsec:validation} we conduct the validation of the
properties from the numerical scheme to ensure both that the high-order and
the well-balanced properties are numerically satisfied. Second, in Subsection
\ref{subsec:numexperiments} we proceed to apply the scheme to challenging
scenarios where analytical results are scarce.

The numerical flux for the simulations is chosen depending on the form of the
pressure term, which satisfies $P(\rho)=\rho^m$ with $m\geq 1$. A local
Lax-Friedrich numerical flux is employed for the examples with ideal-gas
pressure, where $m=1$ and the support of the density is not compact. On the
contrary, a kinetic scheme based on \cite{perthame2001kinetic} is employed
for pressures with $m>1$, due to the presence of vacuum regions and
compactly-supported densities. For the details of these two numerical fluxes
we refer the reader to our previous work \cite{carrillo2018wellbalanced}.

For the temporal integration we implement the versatile third order TVD Runge-Kutta method \cite{gottlieb1998total}, with the CFL number chosen as 0.7 in all the simulations. The CFL conditions for these two numerical fluxes are detailed in equation \eqref{eq:CFL} of appendix \ref{app:CWENO}. The boundary conditions are periodic unless otherwise specified. In all the simulation we set $\gamma=1$ in the linear damping while the nonlinear damping is in general deactivated, except for example \ref{ex:damping}. The number of cells employed to create the plots is $200$. For the figures we use the third-order time discretization scheme, unless otherwise stated.

In the following numerical simulations we focus on the temporal evolution of
the density, momentum and free-energy variation in \eqref{eq:defK}. For
illustrative purposes, we also plot the evolution of the discrete versions of
the total energy in \eqref{eq:totalenergy} and free energy in
\eqref{eq:freeenergy}, which are given by
\begin{equation}\label{eq:freeenergydiscrete}
E^\Delta= \sum_i \frac{\Delta x_i}{2}\rho_i u_i^2 +
\mathcal{F}^\Delta \quad \mbox{ and } \quad
\mathcal{F}^\Delta = \sum_i \Delta x_i \left[\Pi\left(\rho_i\right)+ V_i\rho_i \right]+\frac12 \sum_{i,j} \Delta x_i \Delta x_j W_{ij} \rho_i \rho_j .
\end{equation}
It is worth mentioning that previous works have constructed finite-volume
schemes that satisfy the discrete analog of the entropy identity in
\eqref{eq:entrineq} (see ``entropy stable schemes" in
\cite{tadmor2016entropy,fjordholm2012arbitrarily}) and free energy
dissipation property in \eqref{eq:equalenergy} (see ``energy dissipating
schemes" in \cite{bailo1811fully,carrillo2015finite,carrillo2018wellbalanced}). The extension of the present scheme
to satisfy the challenging discrete properties of entropy stability and energy
dissipation with high-order accuracy will be explored elsewhere.

%
%

\subsection{Validation of the numerical scheme}\label{subsec:validation}

The validation of the finite-volume scheme in Section \ref{sec:numsch}
encompasses a test for the well-balanced property and a test for the
high-order accuracy in the transient regimes. Both tests are conducted in two different scenarios, for
which different choices of the free energy in \eqref{eq:freeenergy} are
taken. The details of such scenarios are written in the examples
\ref{ex:idealpot} and \ref{ex:idealker} of below.

On the one hand, for the well-balanced property we show that the steady-state
solution is preserved in time up to machine precision. For this we select as
initial condition a density and momentum profile satisfying the steady states
obtained from~\eqref{eq:steadyvarener}. Numerically this means that the
discrete version of the variation of the free energy in \eqref{eq:discretewb}
holds, while the momentum vanishes throughout the domain. The results for
this test are depicted in table \ref{table:preserve}, for a simulation time
run from $t=0$ to $t=5$ and a number of cells of $50$.

\begin{table}[!htbp]
\centering
\caption{Preservation of the steady state for the examples \ref{ex:idealpot} and \ref{ex:idealker} with the third- and fifth-order schemes and double precision, at $t=5$}\label{table:preserve}

\begin{tabular}{cccc} \hline
                         & Order of the scheme & $L^1$ error \\
                         \hline
\multirow{2}{*}{Example \ref{ex:idealpot}} & 3\textsuperscript{rd}            & 1.7082e-16     \\
                         & 5\textsuperscript{th}         & 1.7094e-16    \\
                         \hline
\multirow{2}{*}{Example \ref{ex:idealker}} & 3\textsuperscript{rd}            & 5.5020E-17      \\
                         & 5\textsuperscript{th}             & 6.4514E-17     \\
                         \hline
\end{tabular}
\end{table}

On the other hand, the order of accuracy in the transient regimes is computed
by evaluating the $L^1$ error of the numerical solution for a particular
mesh-size grid $\Delta x$ with respect to a reference solution. To measure
the high-order accuracy of the scheme we select scenarios where shock waves
or sharp gradients in the density and momentum profile do not evolve, such as
the ones in the examples \ref{ex:idealpot} and \ref{ex:idealker}, and we let
the simulation run until $t=0.1$. We repeat this procedure by halving the
$\Delta x$ from the previous simulation and the order of the scheme is
computed as
\begin{equation*}
\text{Order of the scheme}=\ln_2\left(\frac{L^1\,\text{error} (\Delta x)}{L^1\,\text{error} (\Delta x/2)}\right).
\end{equation*}
For the system with a nonlocal free energy in \eqref{eq:generalsys} there are
generally no explicit solutions in the transient regime. This implies that
the reference solution has to be computed from the same numerical scheme with
an extremely refined $\Delta x$ with the aim of accepting that numerical
solution as the exact one. Here we take $25600$ cells to compute such
reference solution, while the other numerical simulations for the order of
accuracy employ 50, 100, 200 and 400 cells. The results showing the third-
and fifth-order of accuracy for the scenarios in examples \ref{ex:idealpot}
and \ref{ex:idealker} are displayed in table \ref{table:idealpot} and
\ref{table:idealker}.

%
%

\begin{examplecase}[Ideal-gas pressure under an attractive potential]\label{ex:idealpot}

For this example we select an ideal-gas free energy with pressure
$P(\rho)=\rho$ and with a quadratic external potential $V(x)=x^2/2$. The
steady state that we aim to preserve follows from
\begin{equation}\label{eq:constantidealpot}
\frac{\delta \mathcal{F}}{\delta \rho}=\Pi'(\rho)+H(x,\rho)=\ln(\rho)+\frac{x^2}{2}=\text{constant}\ \text{on}\ \mathrm{supp}(\rho)\ \text{and}\ u=0.
\end{equation}
Free energies of this type appear in the context of chemotaxis with a fixed
chemoattractant profile \cite{gamba2003percolation, serini2003modeling,
filbet2005approximation}, where cells will typically vary their direction
when reacting to the presence of a chemical substance, so that they are
attracted by chemically favorable environments and dodge unfavorable ones. In
chemotaxis there is a chemo-attractant function playing a role similar to the
external potential $V(x)$, and in more complex models this chemo-attractant
function may even have its own parabolic equation for its evolution. There
are numerous well balanced schemes for chemotaxis. Amongst them we highlight
the fully implicit finite-volume scheme in \cite{filbet2006finite}, the
scheme allowing for vacuum states in \cite{natalini2012well}, the Godunov
scheme in \cite{gosse2012asymptotic} and the high-order finite-volume and
finite-differences schemes in \cite{filbet2005approximation,xing2006high,
natalini2012asymptotic}.

The density profile for the steady state in \eqref{eq:constantidealpot} for
an initial mass $M_0$ satisfies a Gaussian distribution of the form
\begin{equation}\label{eq:steadyidealpot}
\rho_{\infty}=M_0 \frac{e^{-x^2/2}}{\int_\R e^{-x^2/2} dx}.
\end{equation}

\begin{table}[!htbp]
\centering
\caption{Accuracy test for Example \ref{ex:idealpot} with the third- and fifth-order schemes, at $t=0.1$}
\label{table:idealpot}
\begin{tabular}{c c c c c }
\hline
\multirow{2}{*}{\begin{tabular}[c]{@{}c@{}}Number of \\ cells\end{tabular}} & \multicolumn{2}{c}{ Third-order} & \multicolumn{2}{c}{Fifth-order}   \\ \cline{2-5}
                                                                                & $L^1$ error    & order    & $L^1$ error           & order          \\ \hline
50                                                                              &               1.4718E-04 &     -     &      1.9260E-05                &     -             \\ \hline
100                                                                             &               2.3726E-05 &     2.63     &        5.1254E-07               &           5.23        \\ \hline
200                                                                             &               2.4182E-06 &    3.29      &        2.1997E-08               &         4.54         \\ \hline
400                                                                             &               2.6708E-07 &    3.18      &        9.2613E-10               &         4.57        \\ \hline
\end{tabular}
\end{table}

For the order of accuracy test we take as initial condition a perturbation of the steady state in \eqref{eq:steadyidealpot},
\begin{equation}\label{eq:icidealpot}
\rho(x,t=0)=M_0\frac{e^{-x^2/2}+0.1*e^{-5(x+3)^2}}{\int_\R\left(e^{-x^2/2}+0.1*e^{-5(x+3)^2}\right)dx},\quad \rho u(x,t=0)=0,\quad x\in [-5,5],
\end{equation}
with $M_0$ equal to $1$ so that the total mass is also 1. The order of
accuracy test from this example is shown in table \ref{table:idealpot}, while
the temporal evolution of the density, momentum, free-energy with respect to
the density, total energy and free energy are displayed in figure
\ref{fig:idealpot}. The third- and fifth-order of accuracy of the numerical
scheme in Section \ref{sec:numsch} are evident from table
\ref{table:idealpot}. From figure \ref{fig:idealpot} (A) we notice how the
Gaussian distribution corresponding to the steady state is reached at $t=12$,
while in figure \ref{fig:idealpot} (C) the variation of the free energy is
constant throughout the domain, given that the density is not compactly
supported and \eqref{eq:discretewb} is satisfied. Finally, it is evident from
figure \ref{fig:idealpot} (D) that the discrete analogs of the total energy
and free energy \eqref{eq:freeenergydiscrete} decay in time.

\begin{figure}[ht!]
\begin{center}
\subfloat[Evolution of the density]{\protect\protect\includegraphics[scale=0.4]{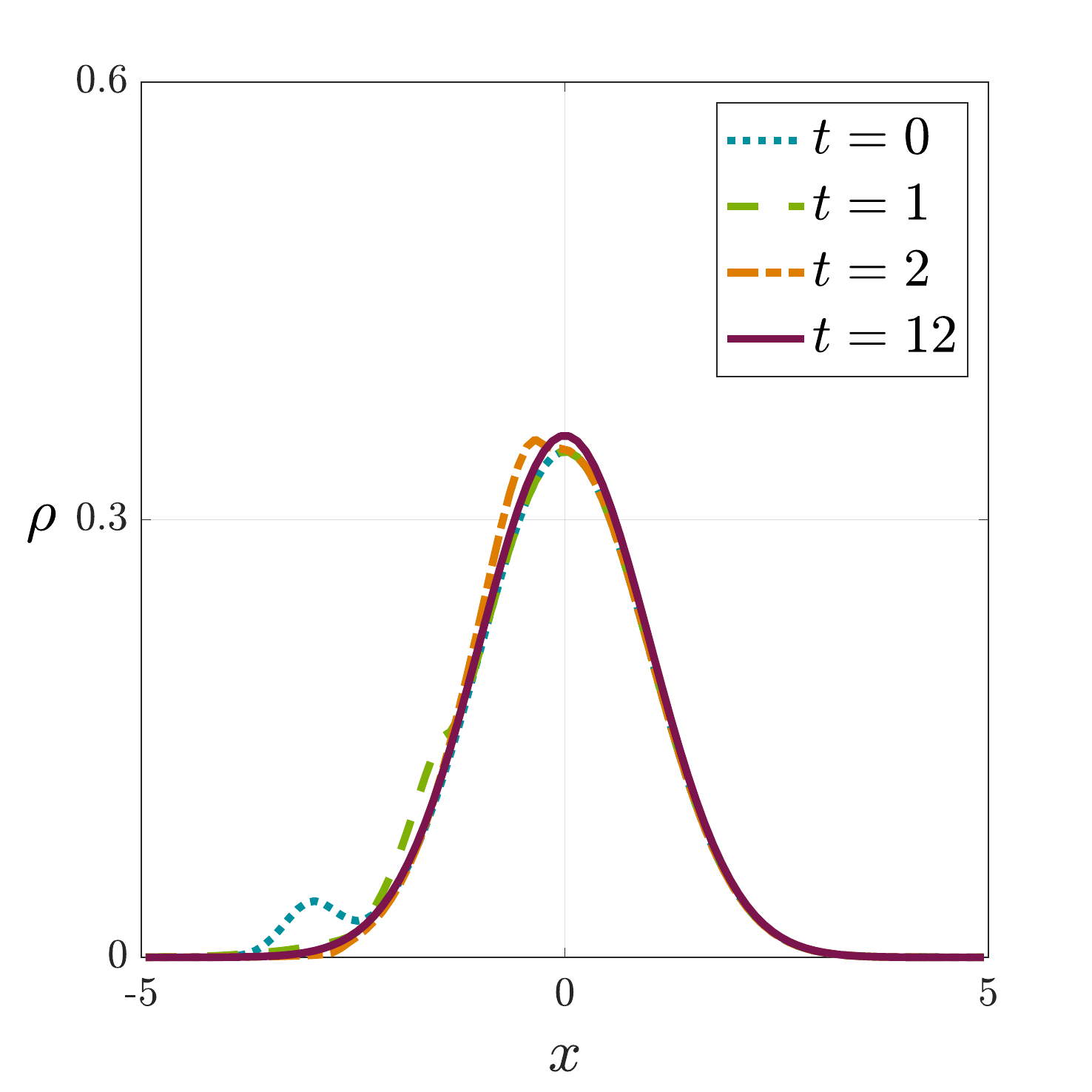}
}
\subfloat[Evolution of the momentum]{\protect\protect\includegraphics[scale=0.4]{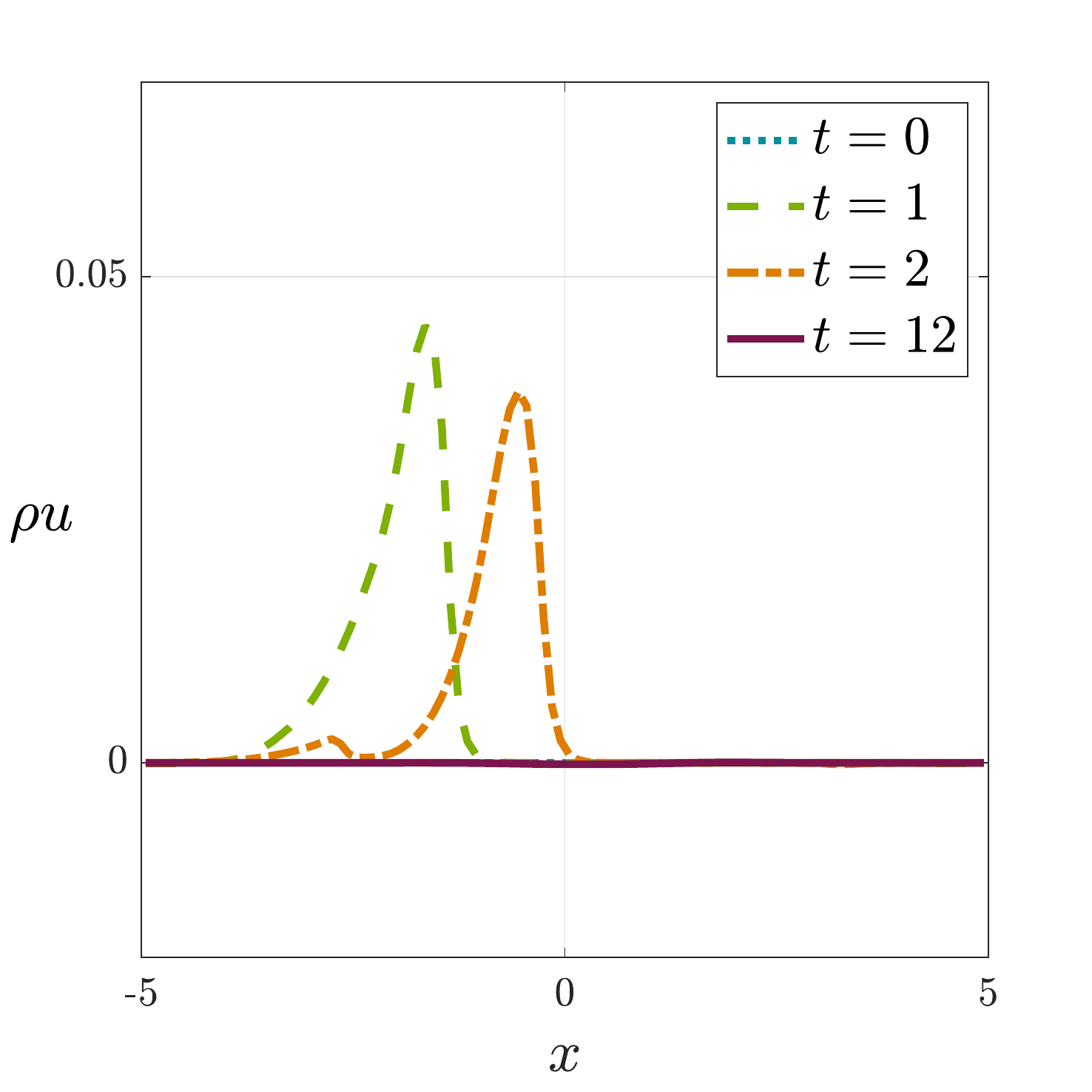}
}\\
\subfloat[Evolution of the variation of the free energy]{\protect\protect\includegraphics[scale=0.4]{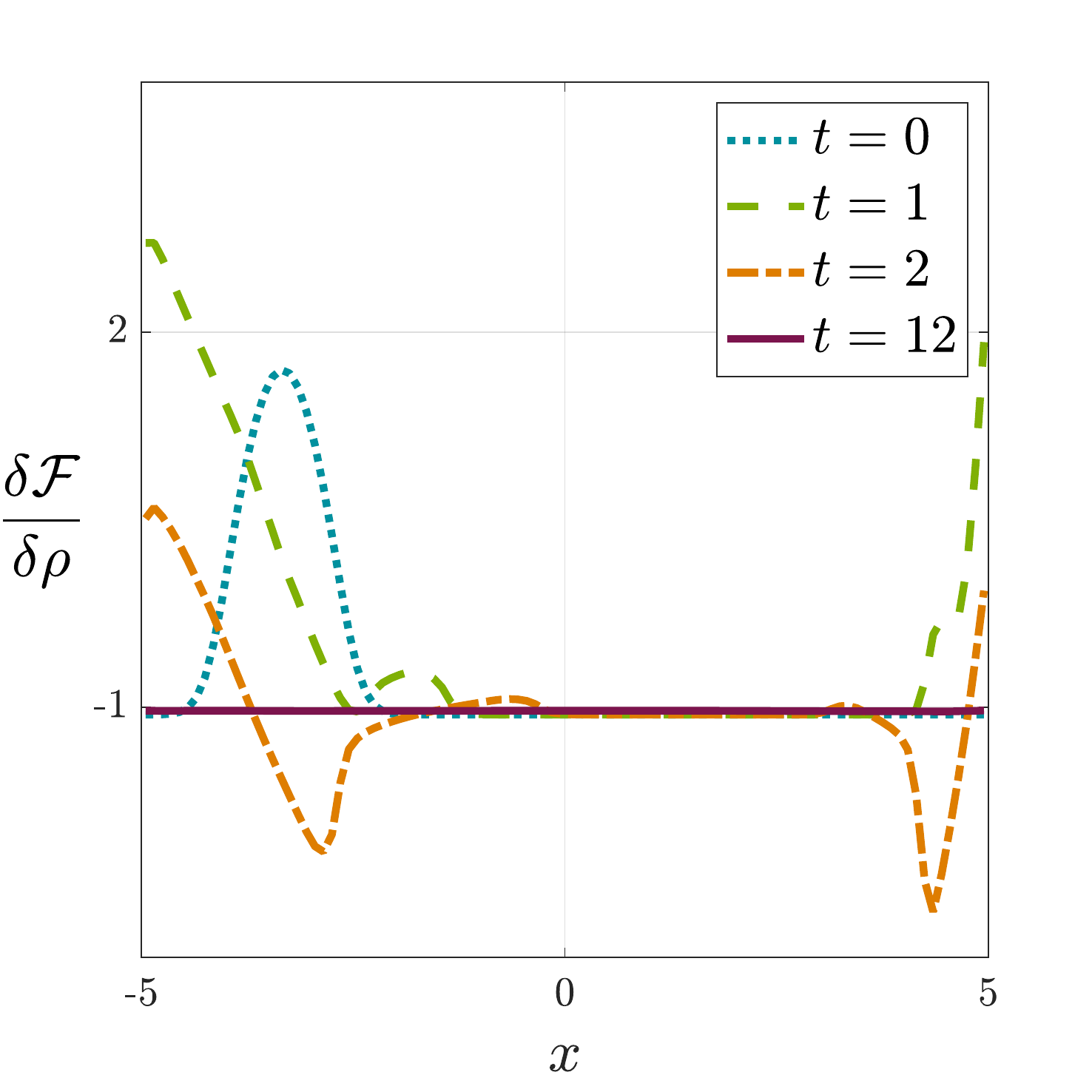}
}
\subfloat[Evolution of the total energy and free energy]{\protect\protect\includegraphics[scale=0.4]{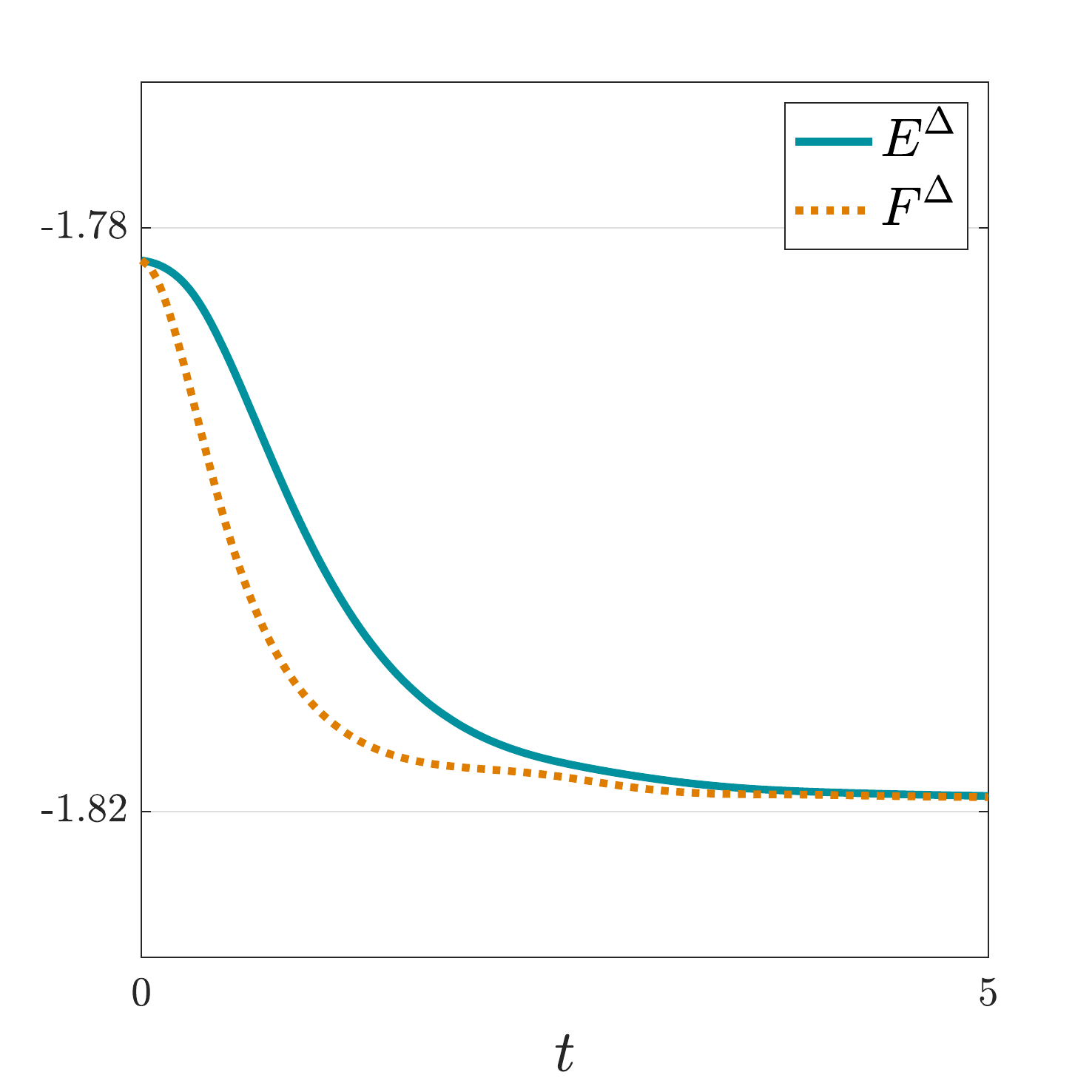}
}
\end{center}
\protect\protect\caption{\label{fig:idealpot} Temporal evolution of Example \ref{ex:idealpot}.}
\end{figure}

In figure \ref{fig:idealpot_comp} we visually illustrate the difference in accuracy between employing the third- and fifth-order schemes of this work versus the first-order scheme in our previous work \cite{carrillo2018wellbalanced}, which is summarized in Subsection \ref{subsec:first_order}. We display the density and momentum fluctuation profiles at two different times ($t=0.2$ and $t=0.4$), which result from substracting the initial conditions in \eqref{eq:icidealpot} to the numerical profiles obtained with the same mesh of 100 cells for the three schemes. The choice of measuring the fluctuations with respect to the initial condition is motivated by capturing the transient behavior. We also plot a reference profile obtained with the thrid-order scheme and 12600 cells. From figure \ref{fig:idealpot_comp} we observe the benefit of employing the high-order schemes in comparison to the first-order one, since they provide a numerical solution much closer to the reference profile for the same number of cells.

\begin{figure}[ht!]
\begin{center}
\subfloat[Density fluctuations at $t=0.2$]{\protect\protect\includegraphics[scale=0.4]{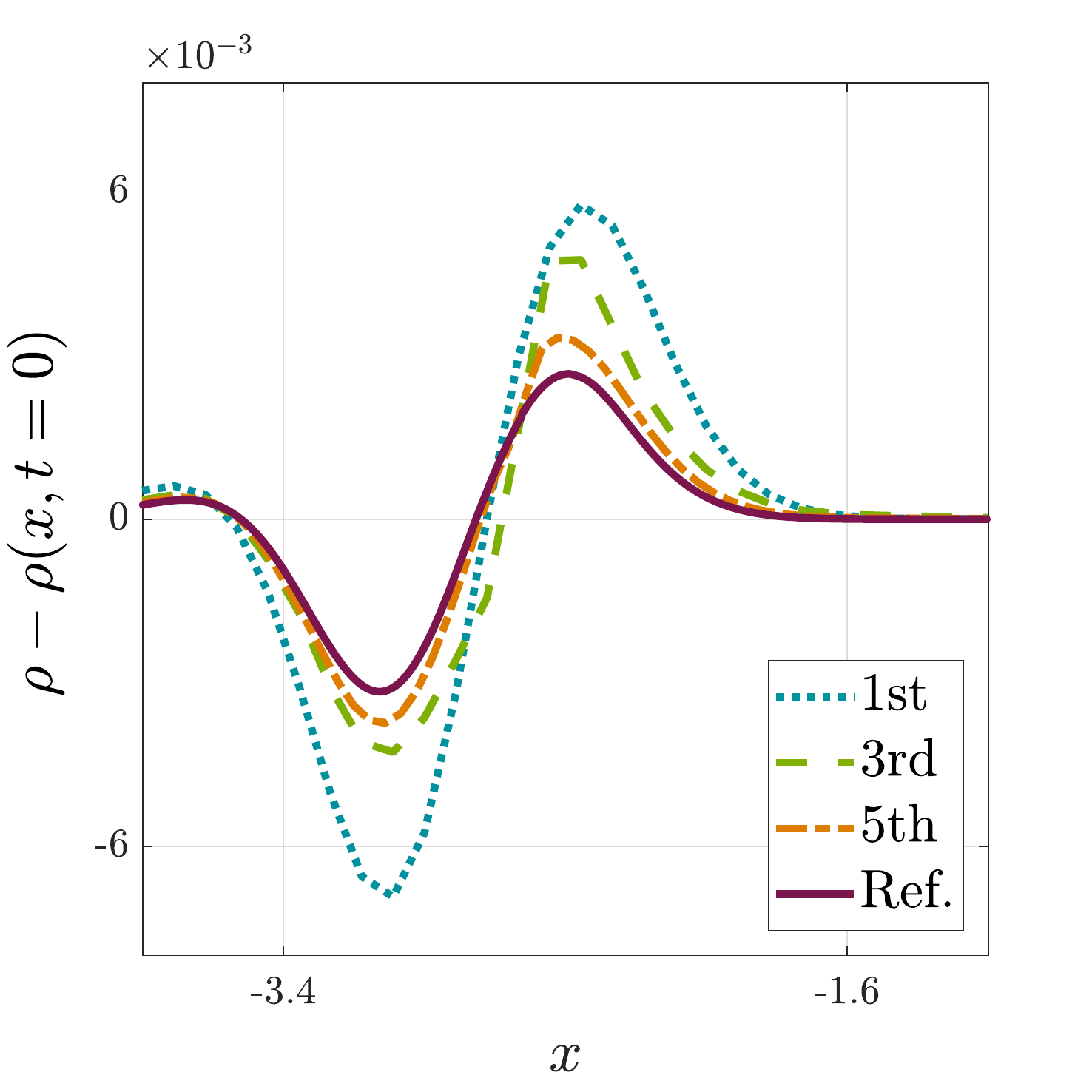}
}
\subfloat[Momentum fluctuations at $t=0.2$]{\protect\protect\includegraphics[scale=0.4]{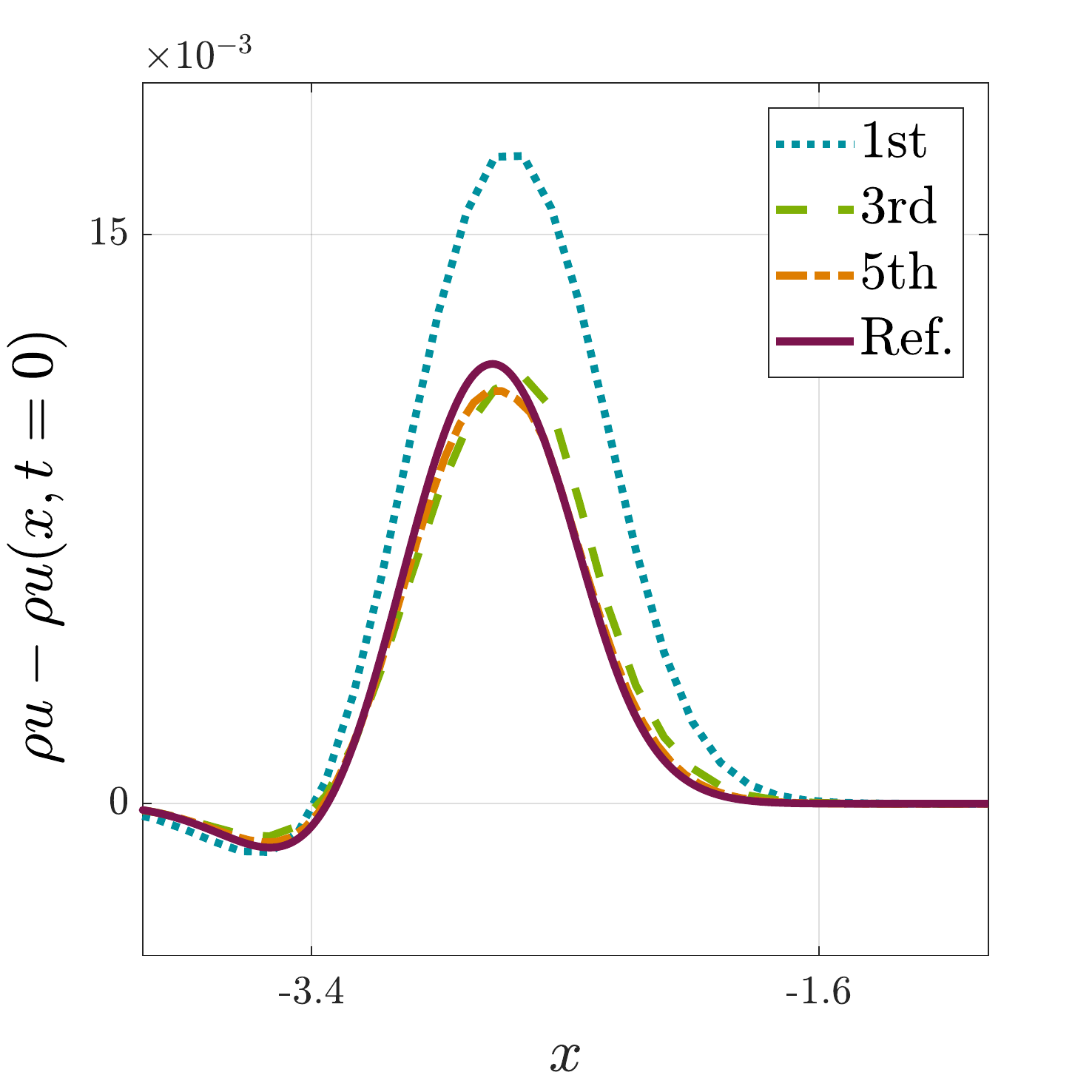}
}\\
\subfloat[Density fluctuations at $t=0.4$]{\protect\protect\includegraphics[scale=0.4]{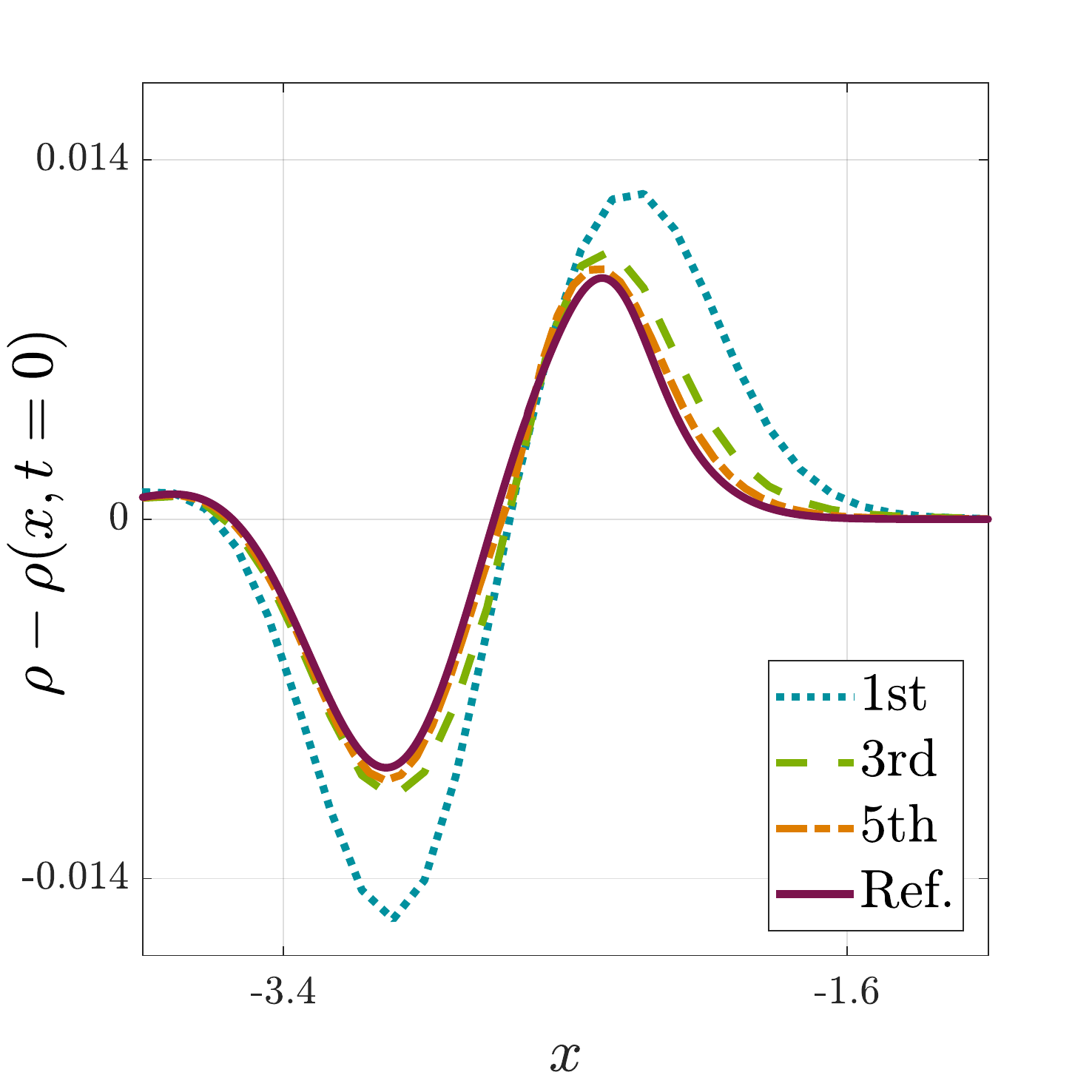}
}
\subfloat[Momentum fluctuations at $t=0.4$]{\protect\protect\includegraphics[scale=0.4]{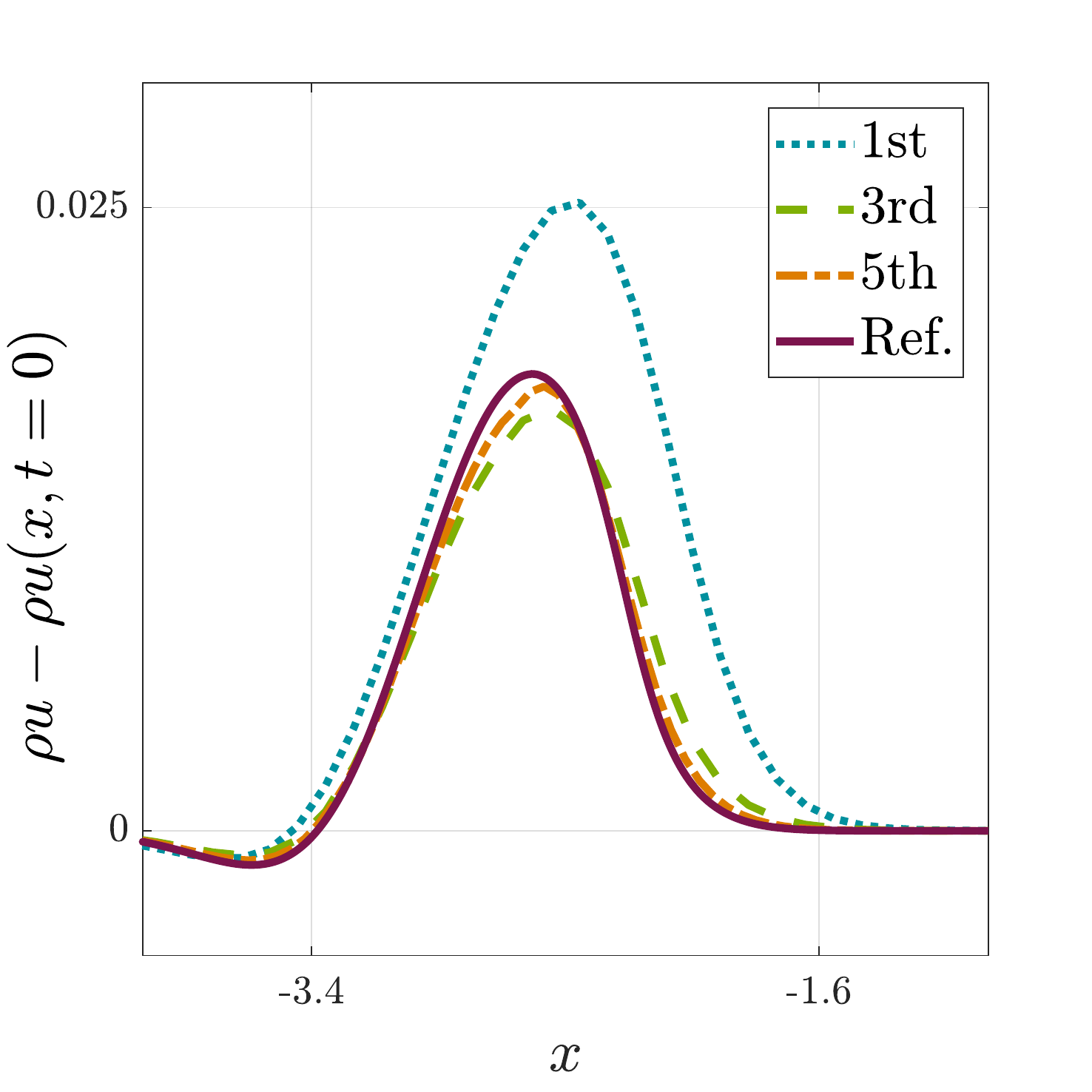}
}
\end{center}
\protect\protect\caption{\label{fig:idealpot_comp} Density and momentum fluctuations in Example \ref{ex:idealpot} for a first-, third- and fifth-order scheme with the same mesh of 100 cells. The reference solution is computed with the third-order scheme and 12600 cells.}
\end{figure}

\end{examplecase}

%
%

\begin{examplecase}[Generalized Euler-Poisson system: ideal-gas pressure and attractive kernel]\label{ex:idealker}

For this example we select an ideal-gas free energy with pressure $P(\rho)=\rho$ together with an interaction potential with a kernel
of the form $W(x)=\frac{x^2}{2}$. In this case the steady state aimed to be preserved satisfies

\begin{equation}\label{eq:constantidealker}
\frac{\delta \mathcal{F}}{\delta \rho}=\Pi'(\rho)+H(x,\rho)=\ln(\rho)+\frac{x^2}{2}\star \rho=\text{constant}\ \text{on}\ \mathrm{supp}(\rho)\ \text{and}\ u=0.
\end{equation}

Free energies of this type are common in Euler-Poisson systems, in which the
Euler equations for a compressible gas are coupled to a self-consistent force
field created by the gas particles \cite{hadvzic2019class}. This interaction
could be gravitational, leading to the modelling of Newtonian stars
\cite{binney2011galactic}, or electrostatic with repelling forces between the
particles is is the case of plasma \cite{guo2011global,
cordier2000quasineutral}. For Euler-Poisson systems the free energy contains
a function $S(t,x)$ which follows a Poisson-like equation, so that
\begin{equation}\label{eq:freeenergyPos}
\frac{\delta \mathcal{F}}{\delta \rho}=\Pi'(\rho)+S(t,x)\quad \text{and} \quad \partial_{xx}S(t,x)=c\rho,
\end{equation}
with $c$ being either $1$ for the gravitational case or -$1$ for the plasma
one. The Poisson equation for $S(t,x)$ can be
solved considering the fundamental solution of the Laplacian in one dimension
\cite{lattanzio2017gas}, which leads to $2S=-c\,|x|\star \rho$. Then, by
plugging this expression for $S$ in the variation of the free energy in
\eqref{eq:freeenergyPos}, one recovers the interaction potential $W(x)$ which
is convoluted with the density $\rho$. For a $S(t,x)$ following the Poisson
equation the interaction potential is $W(x)=-c\,|x|$, but for $c=-1$ one can
generalize it to a homogeneous kernel $W(x)=|x|^\alpha/\alpha$, where
$\alpha>-1$ and $W(x)=\ln|x|$ when $\alpha=0$ for convention. A popular
application of these more general kernels $W(x)$ is in the Keller-Segel
system for cells and bacteria \cite{bian2013dynamic, calvez2017equilibria,
carrillo2018ground} which we explore in example \ref{ex:KS}.

\begin{table}[!htbp]
\centering
\caption{Accuracy test for Example \ref{ex:idealker} with the third- and fifth-order schemes, at $t=0.1$}
\label{table:idealker}
\begin{tabular}{c c c c c }
\hline
\multirow{2}{*}{\begin{tabular}[c]{@{}c@{}}Number of \\ cells\end{tabular}} & \multicolumn{2}{c}{ Third-order} & \multicolumn{2}{c}{Fifth-order}   \\ \cline{2-5}
                                                                                & $L^1$ error    & order    & $L^1$ error           & order          \\ \hline
50                                                                              &               5.0109E-04 &     -     &      1.0913E-04                &     -             \\ \hline
100                                                                             &               1.2721E-04 &     1.98     &        5.0556E-06               &           4.43        \\ \hline
200                                                                             &               1.7573E-05 &    2.86      &        5.3713E-08               &         6.56         \\ \hline
400                                                                             &               2.3001E-06 &    2.93      &        2.3448E-10               &         4.52        \\ \hline
\end{tabular}
\end{table}

For Example 3.2 we select $\alpha=2$, leading to the interaction potential
$W(x)=\frac{x^2}{2}$ in the variation of the free energy in
\eqref{eq:constantidealker}. The steady state for a general mass $M_0$ is
equal to the steady state for example \ref{ex:idealpot} and satisfies
\eqref{eq:steadyidealpot}.
Notice that the particular choice of $W(x)=\frac{x^2}{2}$ and a symmetric initial  condition makes this Example analytically equivalent to the case of external quadratic potential in Example 3.1 with the same initial data, just expand the convolution and use symmetry. However, by treating it numerically as a convolution we are able to check the order of accuracy for interaction potentials.
For the order of accuracy test the initial
condition is a symmetric perturbation of the steady state in
\eqref{eq:steadyidealpot},
\begin{equation*}
\rho(x,t=0)=M_0\frac{e^{-x^2/2}+0.05*e^{-5(x+3)^2}+0.05*e^{-5(x-3)^2}}{\int_\R\left(e^{-x^2/2}+0.05*e^{-5(x+3)^2}+0.05*e^{-5(x-3)^2}\right)dx},\quad \rho u(x,t=0)=0,
\end{equation*}
with $x\in [-10,10]$ and $M_0$ equal to $1$ so that the total mass is also 1. The order of
accuracy test from this example is shown in table \ref{table:idealker}, while
the temporal evolution of the density, momentum, variation of the free energy
with respect to the density, total energy and free energy are depicted in
figure \ref{fig:idealker}. The third- and fifth-order of accuracy of the
numerical scheme in Section \ref{sec:numsch} are evident from table
\ref{table:idealker}. Figure \ref{fig:idealker} (A) shows that the density
remains symmetric at all times eventually reaching the steady state profile
in \eqref{eq:steadyidealpot}. It is also evident from figure
\ref{fig:idealker} (C) that the variation of free energy reaches a constant
value in the regions where the density is non-compactly supported, while
figure \ref{fig:idealker} (D) demonstrates that the total energy and free
energy exhibit a temporal decay.

\begin{figure}[ht!]
\begin{center}
\subfloat[Evolution of the density]{\protect\protect\includegraphics[scale=0.4]{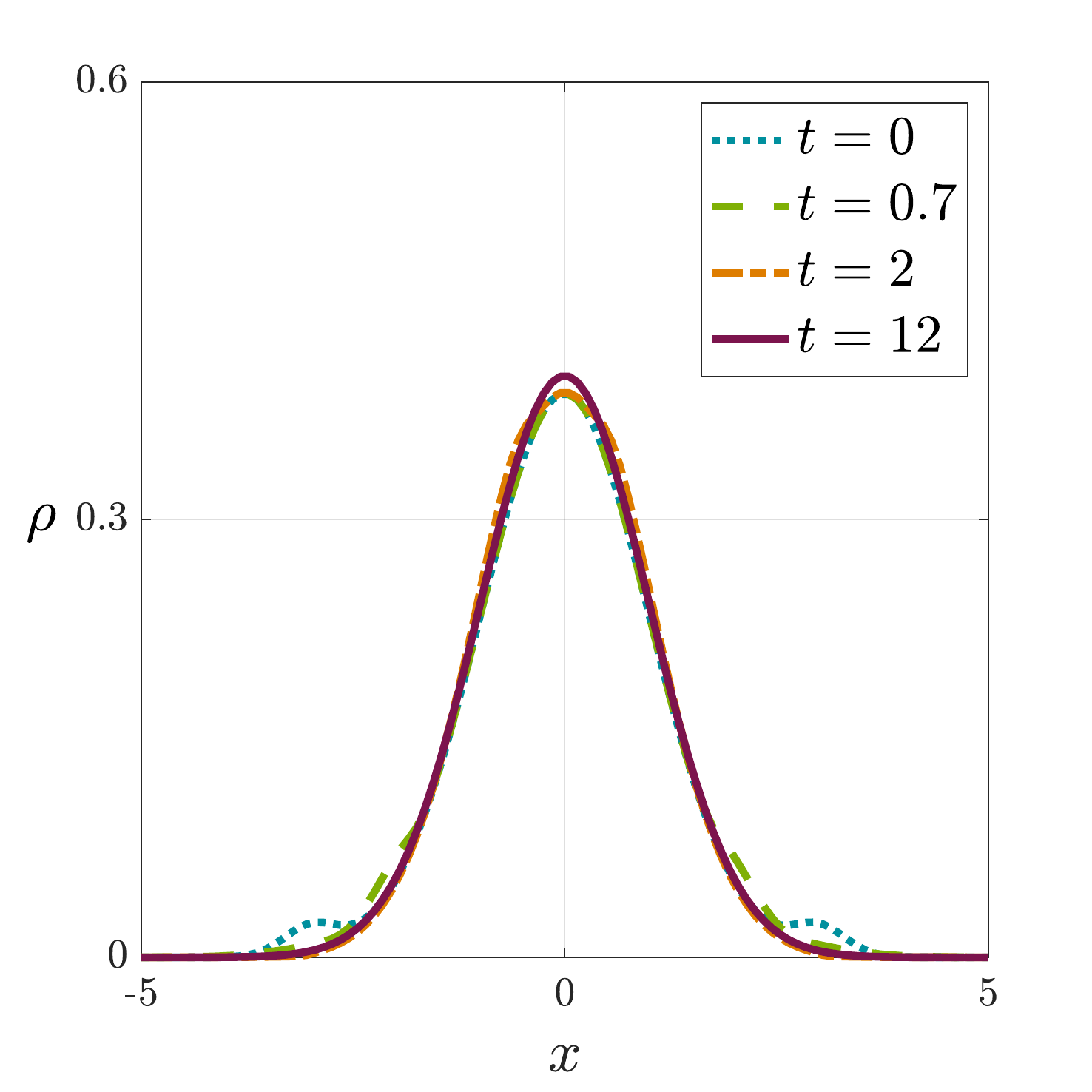}
}
\subfloat[Evolution of the momentum]{\protect\protect\includegraphics[scale=0.4]{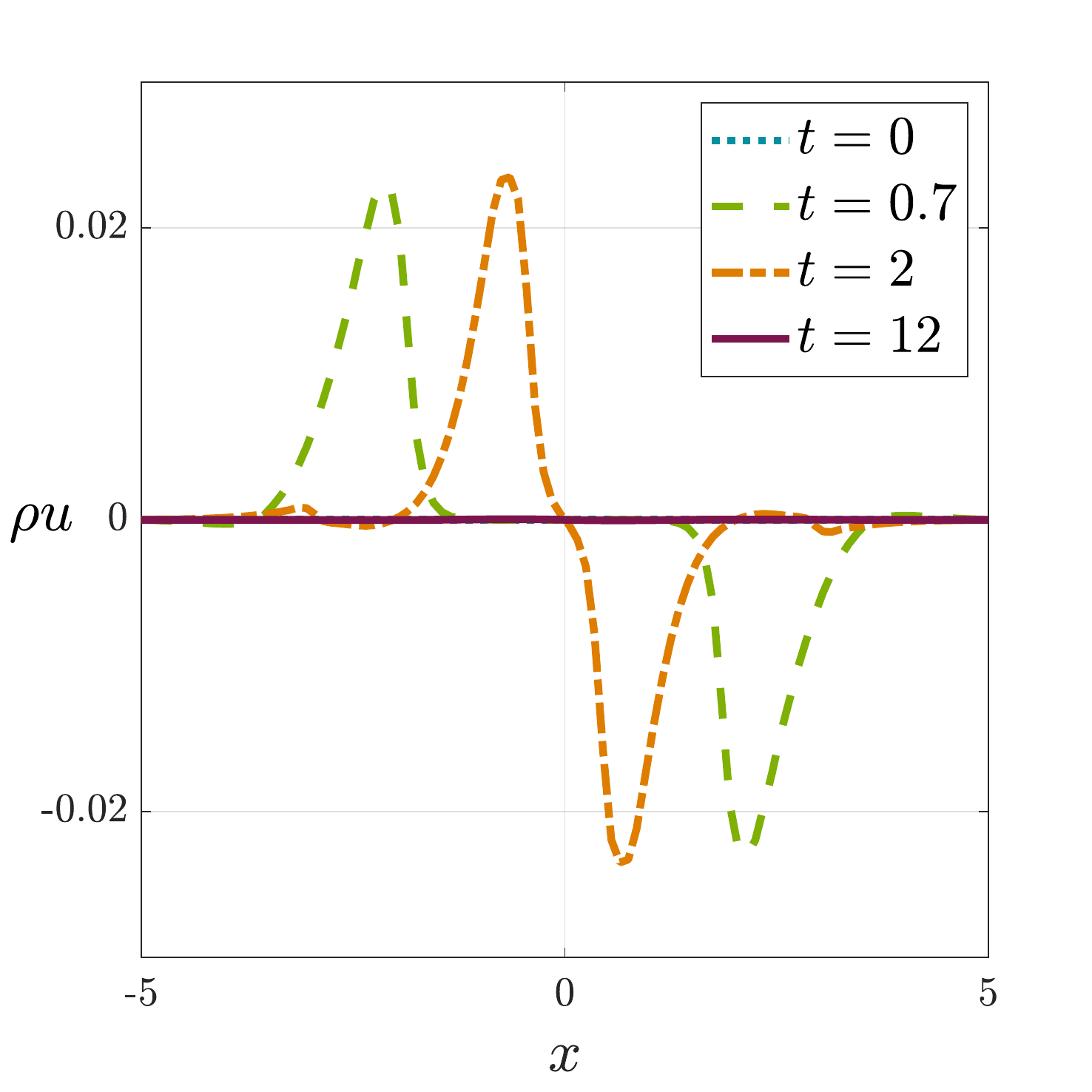}
}\\
\subfloat[Evolution of the variation of the free energy]{\protect\protect\includegraphics[scale=0.4]{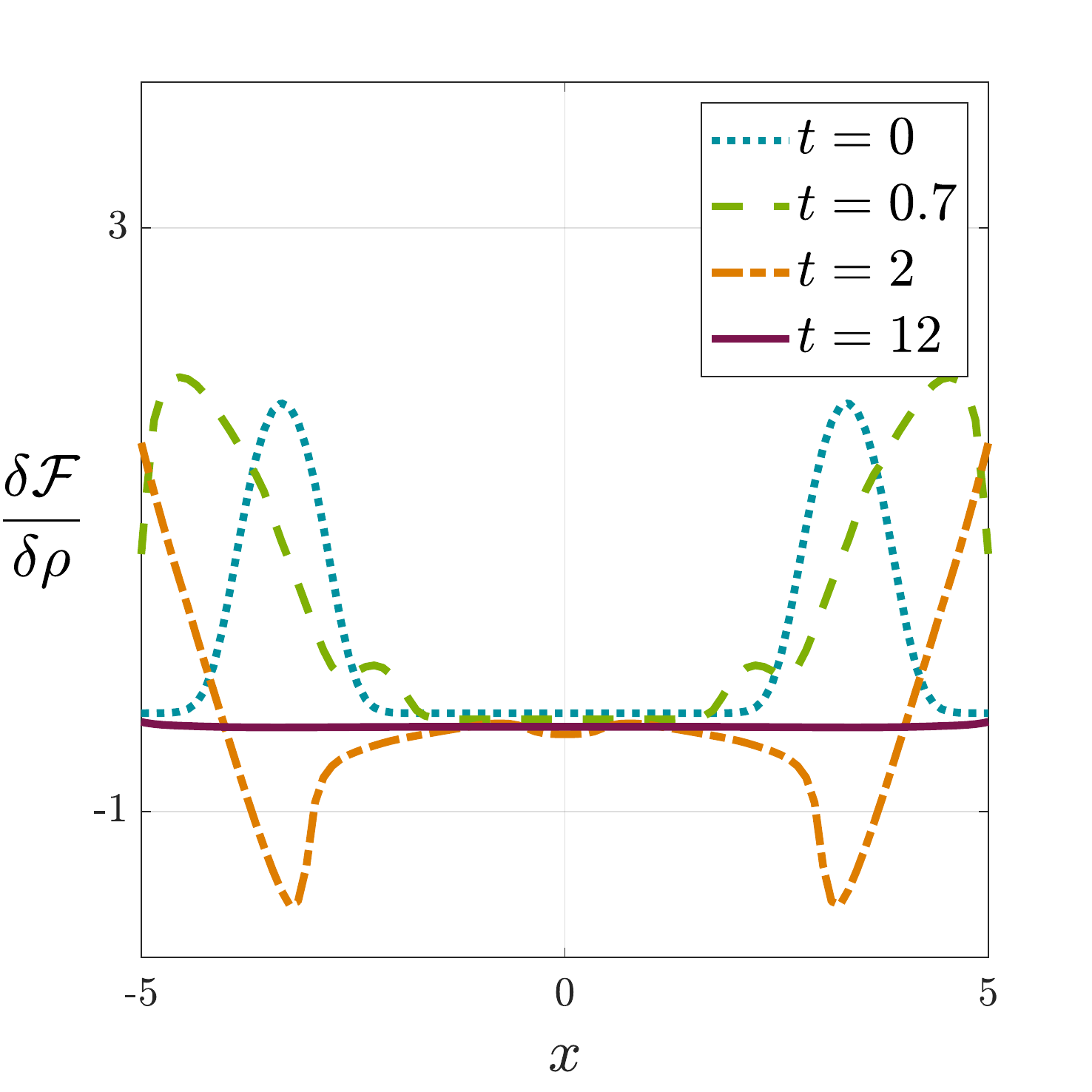}
}
\subfloat[Evolution of the total energy and free energy]{\protect\protect\includegraphics[scale=0.4]{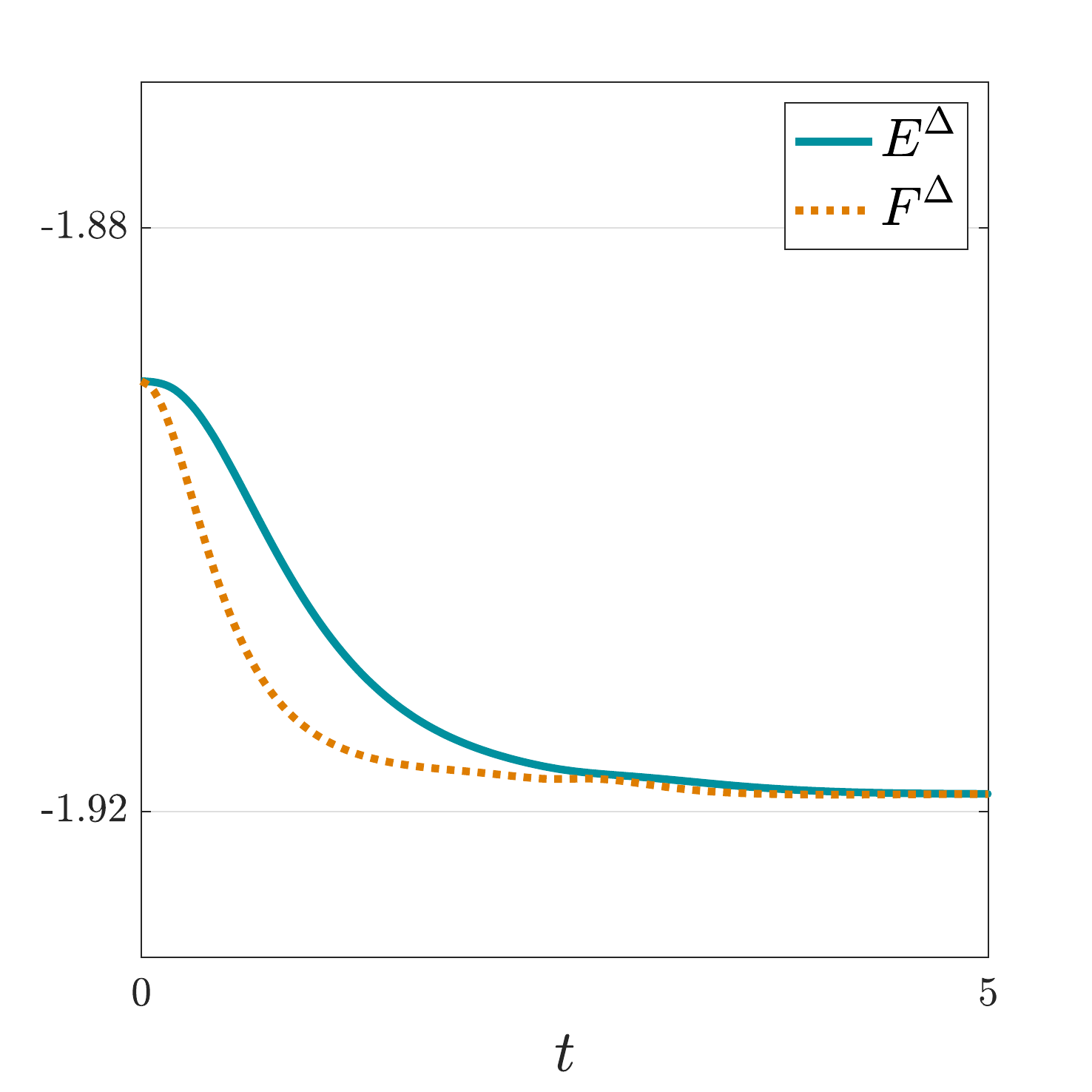}
}
\end{center}
\protect\protect\caption{\label{fig:idealker} Temporal evolution of Example \ref{ex:idealker}.}
\end{figure}

\end{examplecase}

%
%

\subsection{Numerical experiments and applications}\label{subsec:numexperiments}
Here we apply the finite-volume scheme we developed in Section
\ref{sec:numsch} to applications of the shallow-water system, a collective
behaviour system with Cucker-Smale and Motsch-Tadmor damping terms, and the
Keller-Segel model. Our scheme is useful to
run challenging numerical experiments for which analytical results are
limited in the literature, such as in the above applications.

%
%
\begin{examplecase}[Shallow water: pressure proportional to square of density and attractive potential]\label{ex:idealdoublewell}

In this example we select a pressure satisfying $P=\rho^2$ together with an
attractive external potential $V(x)$. This scenario corresponds to the
well-known shallow-water equations, which model free-surface gravity waves
whose wavelength is much larger than the characteristic bottom depth. The
choice of $P=\rho^2$ leads to the presence of dry regions during the
water-height evolution. These equations are applied in a wide range of
engineering and scientific applications involving free-surface flows
\cite{xing2014survey}, such as tsunami propagation \cite{castro2019third},
dam break and flooding problems \cite{cozzolino2018solution} and the
evolution of rivers and coastal areas \cite{churuksaeva2015mathematical}.

The main three challenges to accurately simulate the shallow-water equations
are the preservation of the steady states, the preservation of the
water-height positivity and the transitions between wet and dry areas. Many
authors have consequently proposed various numerical schemes addressing these
challenges, employing methodologies ranging from finite-difference and
finite-volume schemes to discontinuous Galerkin ones. The reader can find
more relevant references about high-order schemes \cite{castro13},
well-balanced reconstructions \cite{audusse2004fast}, density positivity
\cite{zhang2010maximum} and the simulation of the wet/dry front
\cite{gallardo2007well} in the introduction of this work and in the
comprehensive survey from Xing and Shu \cite{xing2014survey}.

\begin{figure}[ht!]
\begin{center}
\subfloat[Evolution of the density]{\protect\protect\includegraphics[scale=0.4]{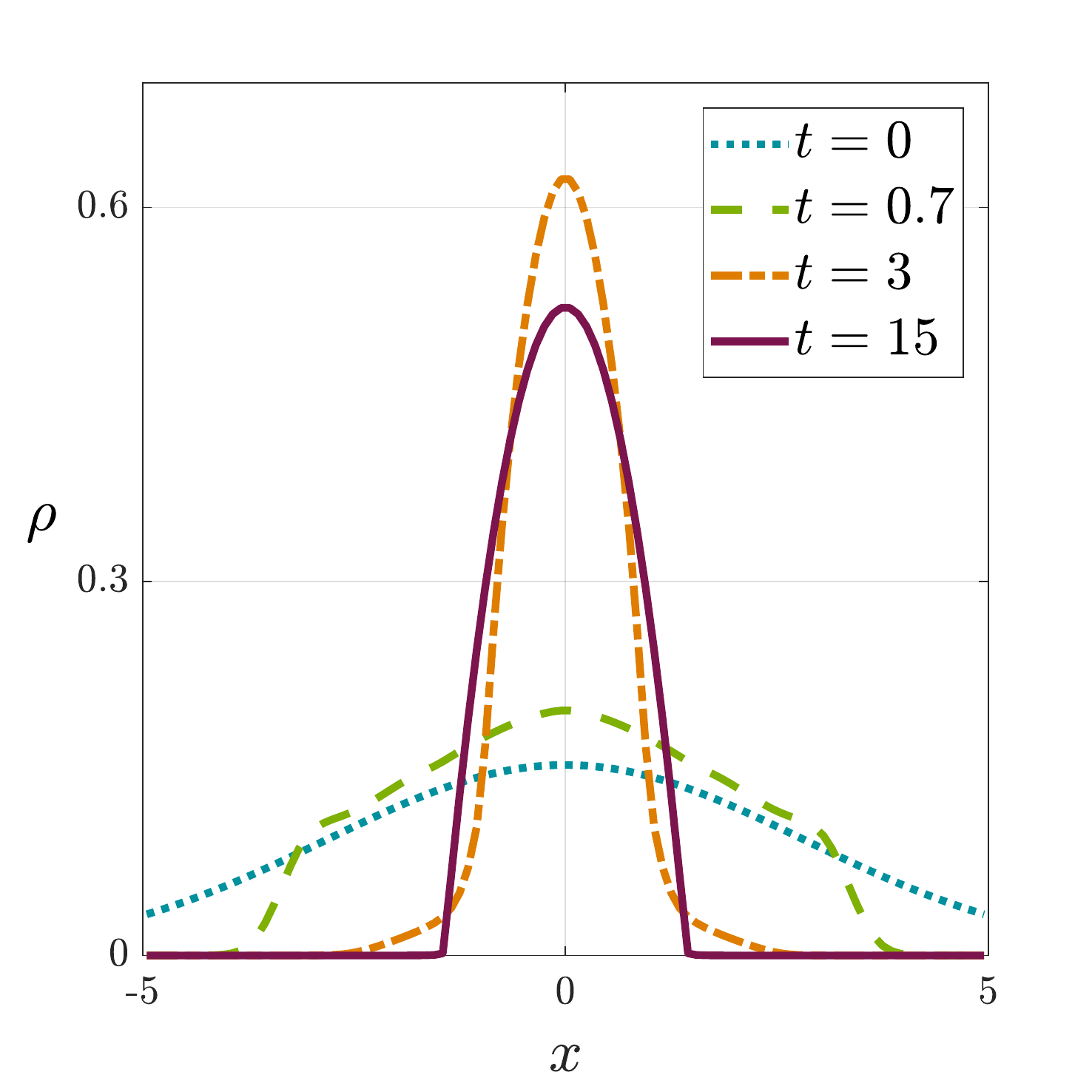}
}
\subfloat[Evolution of the momentum]{\protect\protect\includegraphics[scale=0.4]{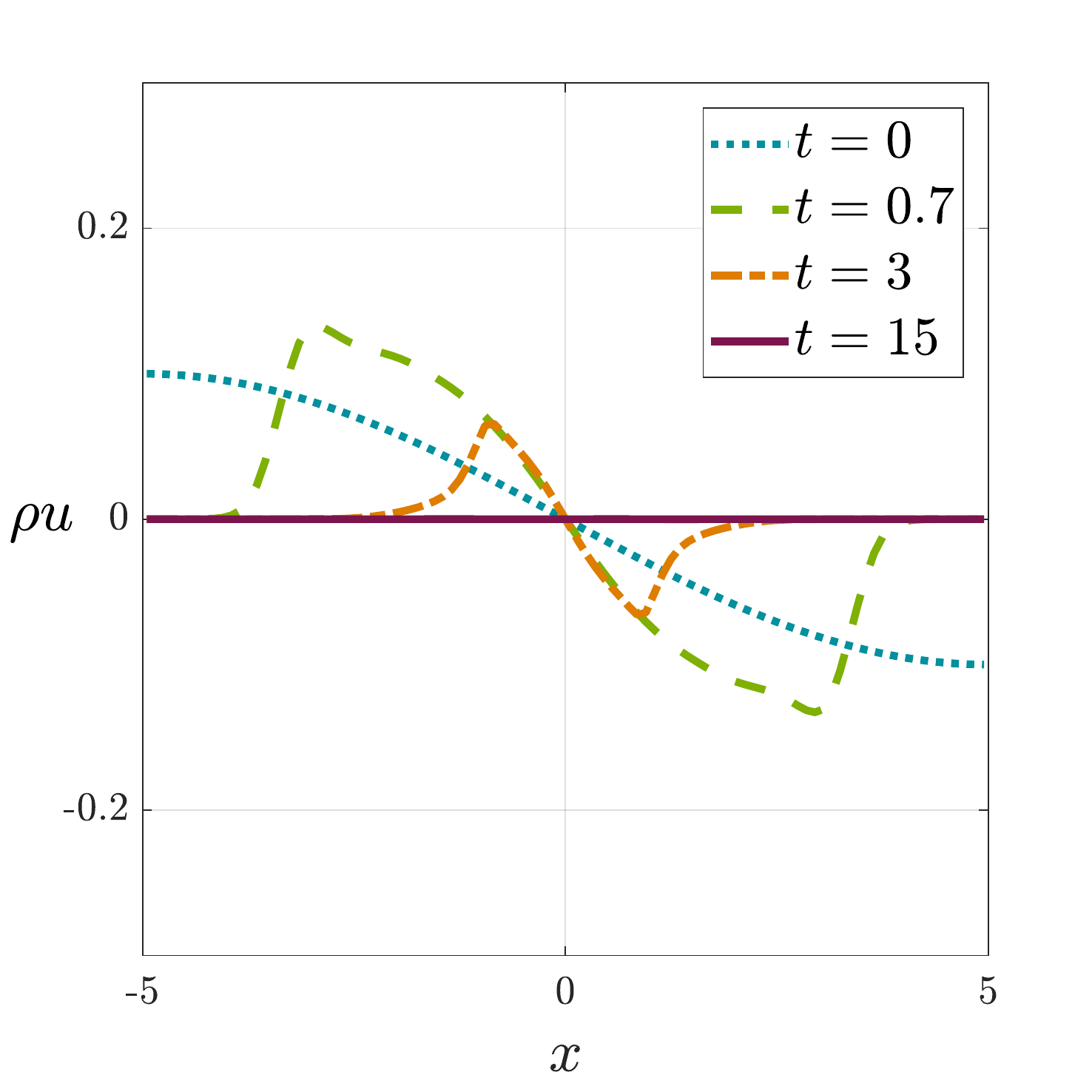}
}\\
\subfloat[Evolution of the variation of the free energy]{\protect\protect\includegraphics[scale=0.4]{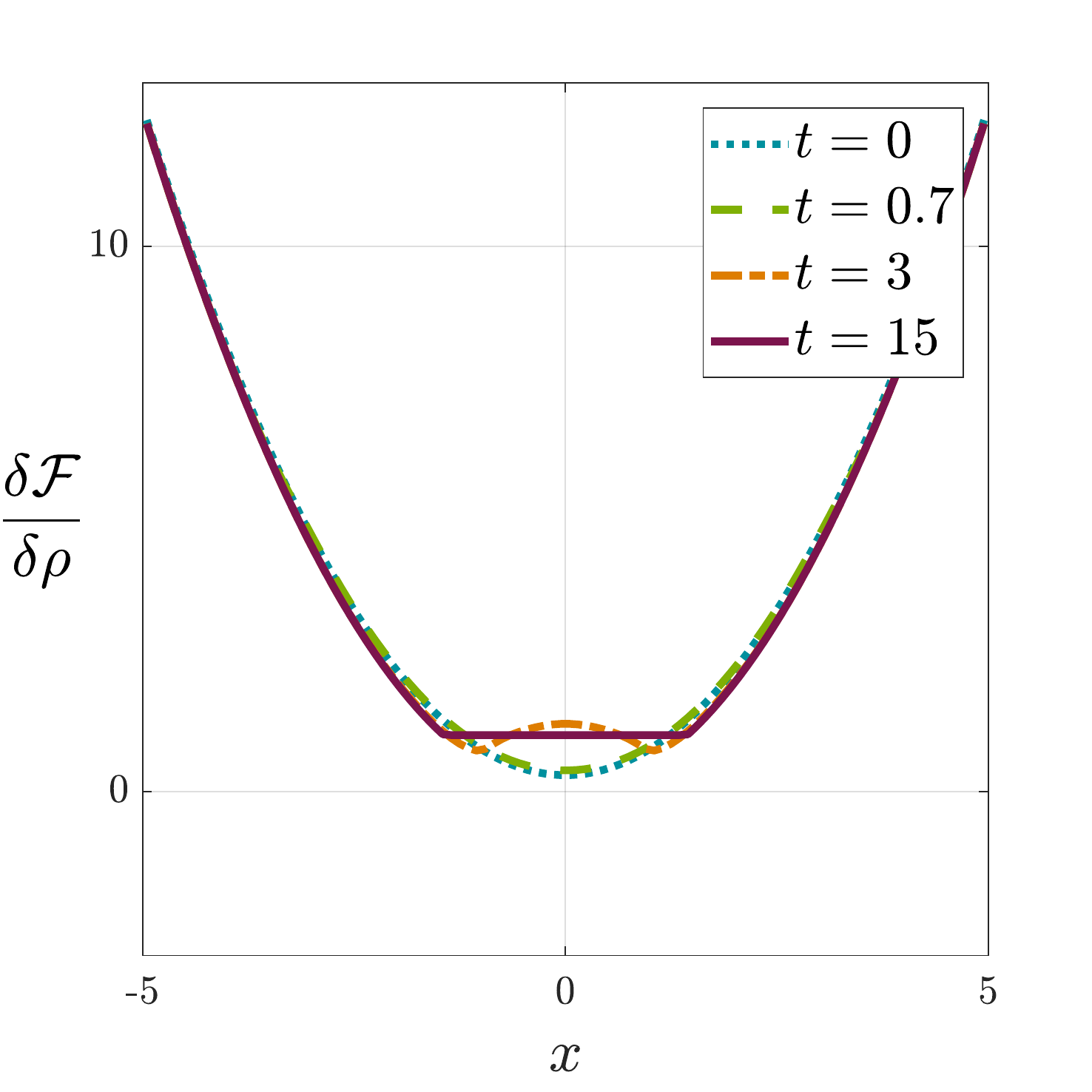}
}
\subfloat[Evolution of the total energy and free energy]{\protect\protect\includegraphics[scale=0.4]{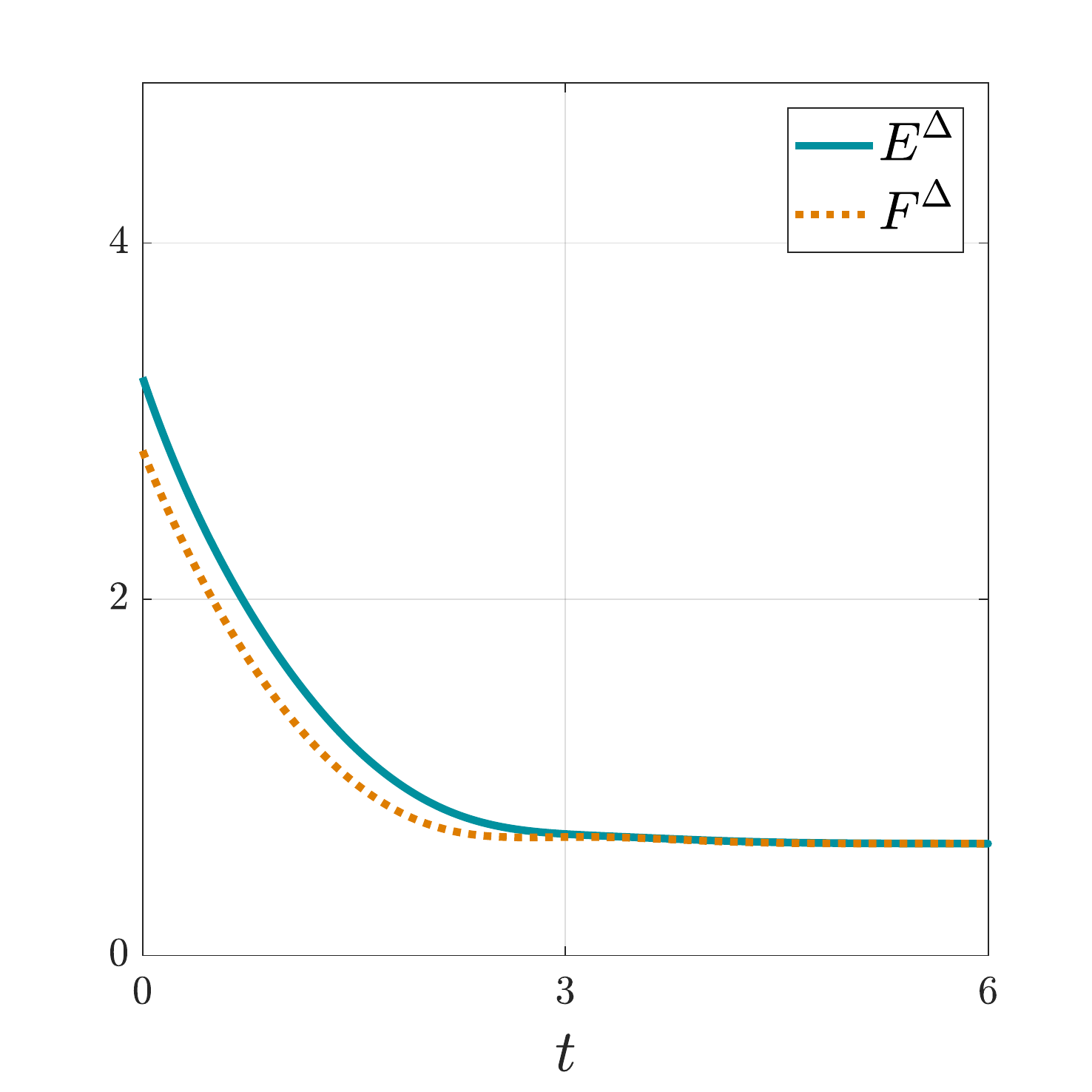}
}
\end{center}
\protect\protect\caption{\label{fig:p2well1_1} Temporal evolution with  single-well external potential and symmetric density in Example \ref{ex:idealdoublewell}.}
\end{figure}

For this example we aim to show that our numerical scheme accurately captures
the dry regions during the simulation and when reaching the steady states.
This is thanks to the combination of the positive-density reconstruction in
Appendix \ref{app:CWENO} and the choice of a kinetic numerical flux which is
able to handle vacuum regions \cite{perthame2001kinetic}. We show this by
conducting simulations with two different choices for the external potential
$V(x)$ with the following initial conditions for both cases
\begin{equation*}
\rho(x,t=0)=\frac{e^{-\lt(x-x_0\rt)^2/16}}{\int_\R e^{-\lt(x-x_0\rt)^2/16} dx}, \quad \rho u(x,t=0)=-0.1 \sin\lt(\frac{\pi x}{10}\rt),\quad x\in [-5,5],
\end{equation*}
with $x_0$ being the initial centre of mass. The steady states for the choice of pressure and external potentials of this example satisfy
\begin{equation}\label{eq:steadystatewell}
    \rho_{\infty}=(C(x)-V(x))_+,
\end{equation}
where $C(x)$ is a piecewise constant function being zero outside the support of the density.
\begin{figure}[ht!]
\begin{center}
\subfloat[Evolution of the density]{\protect\protect\includegraphics[scale=0.4]{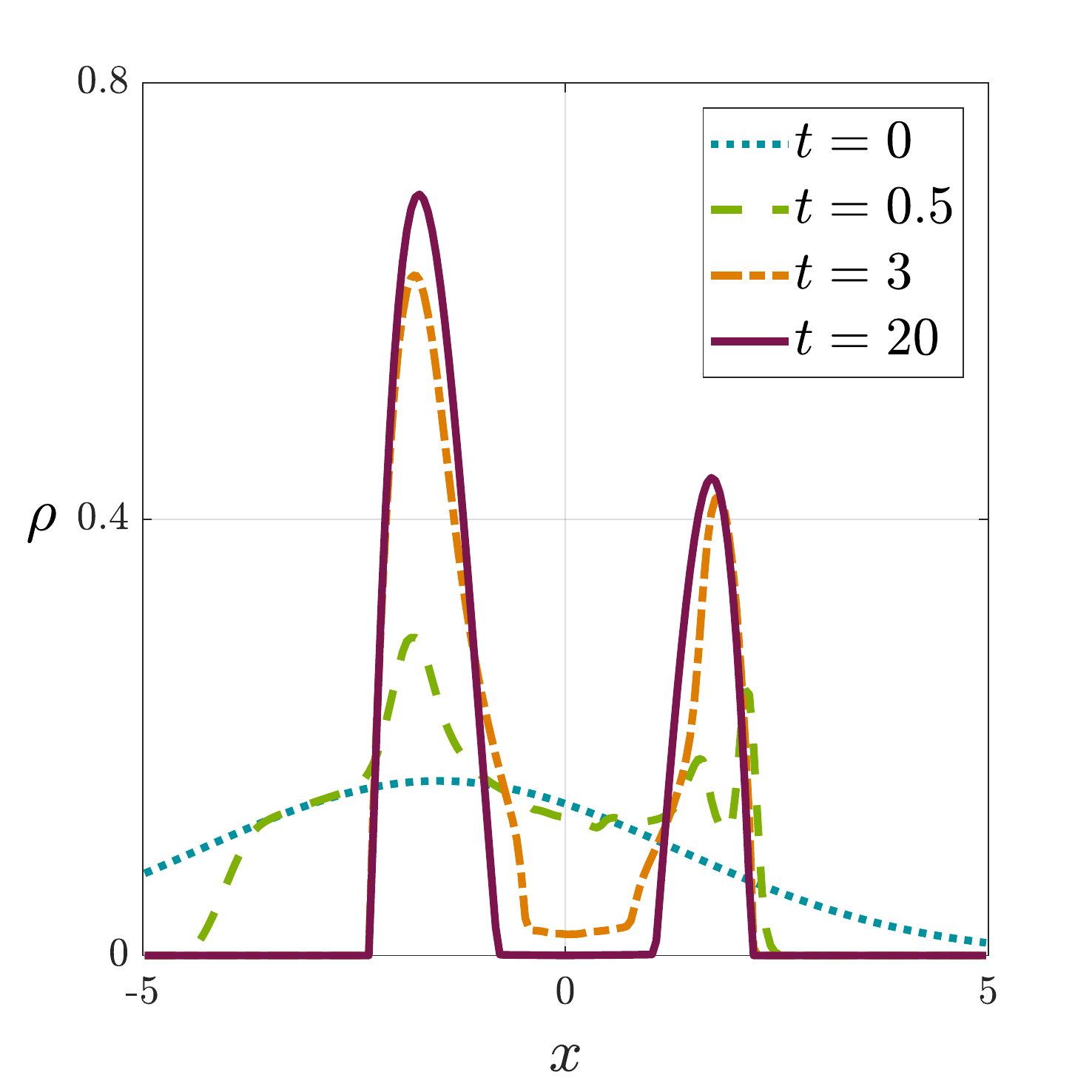}
}
\subfloat[Evolution of the momentum]{\protect\protect\includegraphics[scale=0.4]{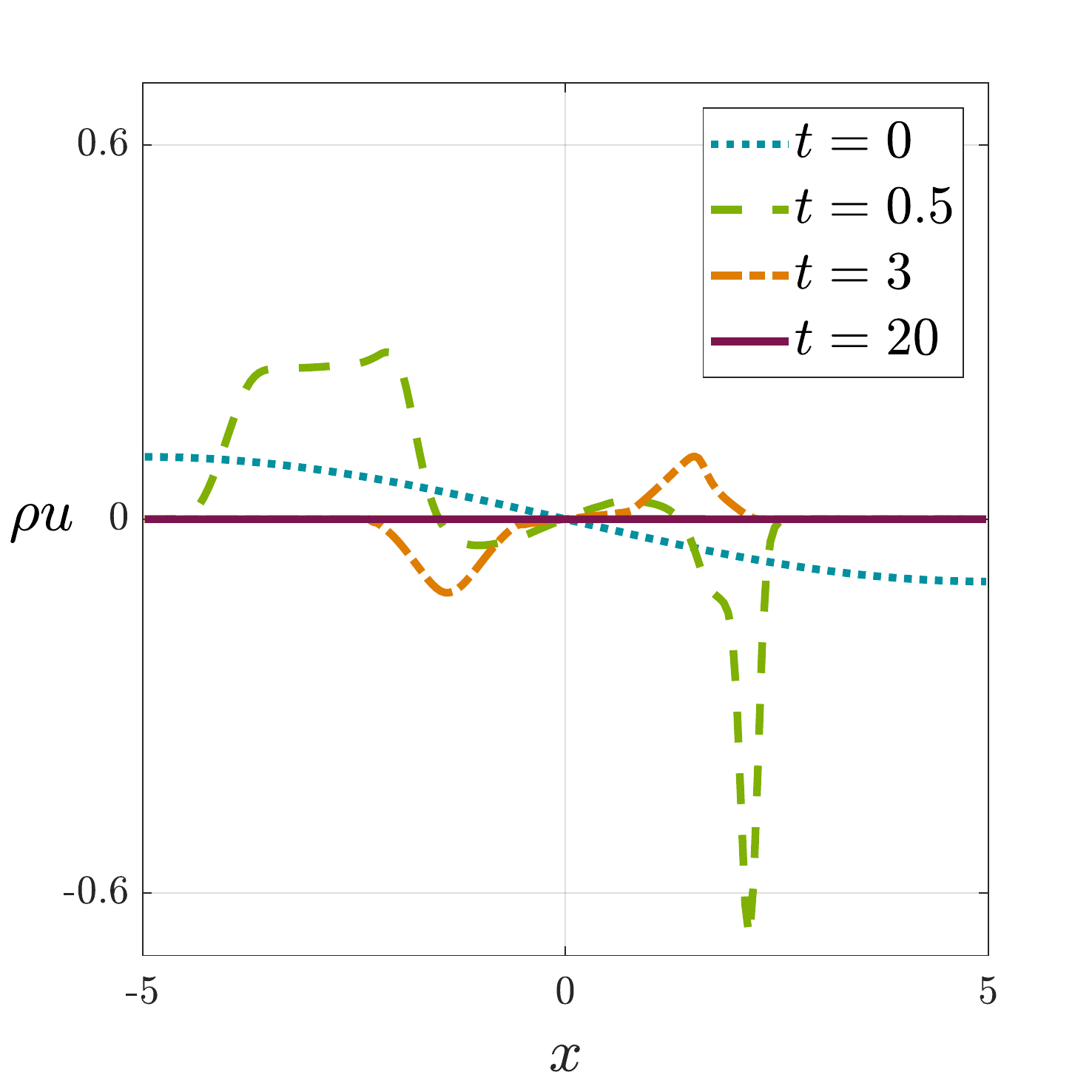}
}\\
\subfloat[Evolution of the variation of the free energy]{\protect\protect\includegraphics[scale=0.4]{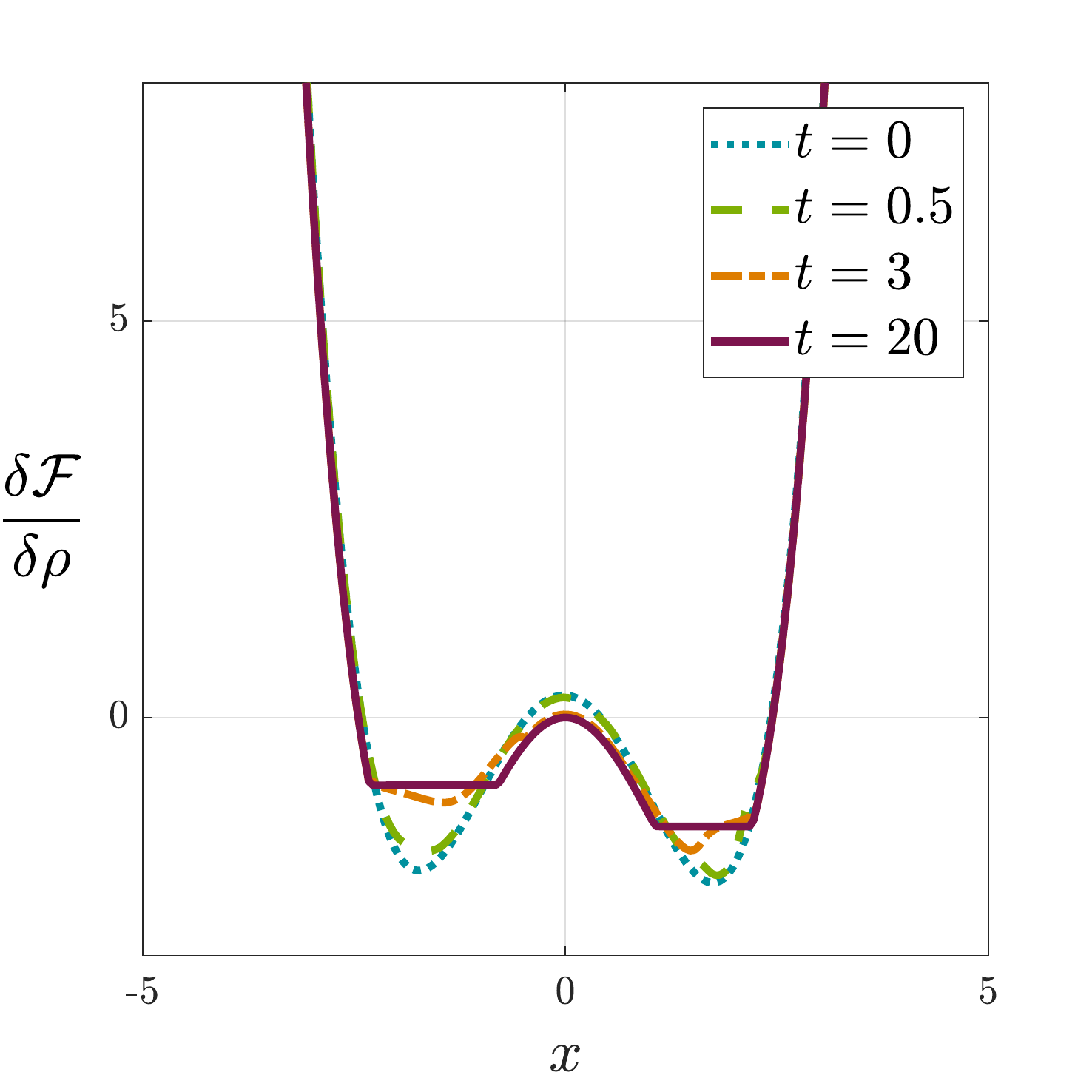}
}
\subfloat[Evolution of the total energy and free energy]{\protect\protect\includegraphics[scale=0.4]{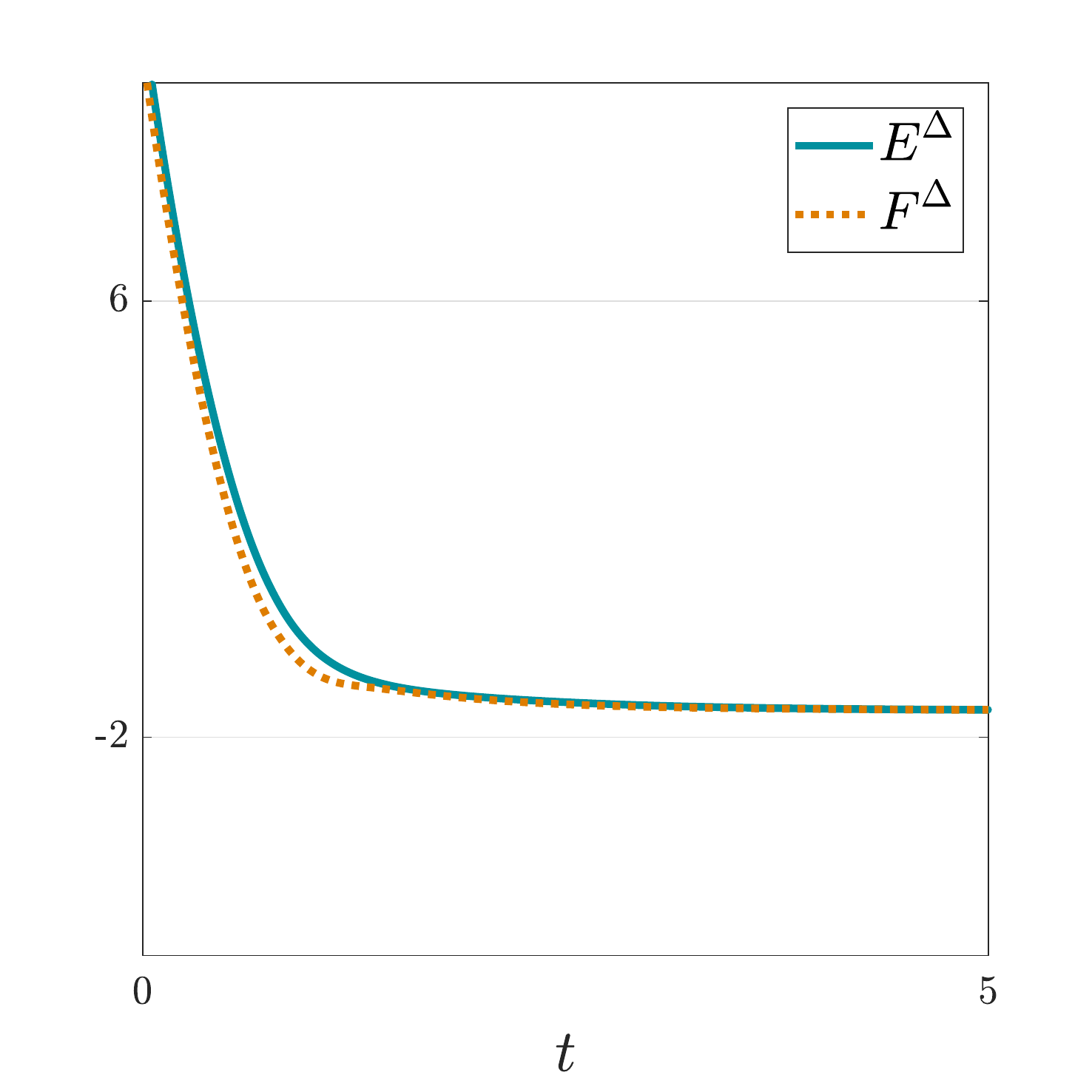}
}
\end{center}
\protect\protect\caption{\label{fig:p2well1_2} Temporal evolution with double-well external potential and asymmetric density in Example \ref{ex:idealdoublewell}.}
\end{figure}

The details of each simulation are:
\begin{enumerate}[label=\arabic*)]
    \item Single-well external potential and symmetric density:
        $V(x)=x^2/2$ and $x_0=0$. The results of this simulation are
        depicted in figure \ref{fig:p2well1_1}. Figure \ref{fig:p2well1_1}
        (a) shows the formation of the compact support of the density
        during the time evolution with the steady state taking the shape of
        a positive parabola and satisfying \eqref{eq:steadystatewell}. We
        also observe that the variation of the free energy in figure
        \ref{fig:p2well1_1} (c) reaches a constant value only in the
        support of the density, in agreement with the steady-state relation
        in \eqref{eq:steadyvarener}. We also note that in
        \ref{fig:p2well1_1} (d) the discrete total energy decreases in
        time, while the discrete free energy has a slight increase around
        $t=3$ due to an exchange of energy with the kinetic energy.

        \item Double-well external potential and asymmetric density:
            $V(x)=x^4/4-3\,x^2/2$ and $x_0=1.5$. The results of this
            simulation are depicted in figure \ref{fig:p2well1_2}. From the
            evolution of the density in figure \ref{fig:p2well1_2} (a) it
            is evident that two compactly-supported bumps of density are
            formed when reaching the steady state. This is due to the
            external potential having two wells. In addition, the mass in
            the bumps is not the same, since the initial density is not
            symmetric. It is also important to remark that, when reaching
            the steady state, the variation of the free energy in each
            compacted support of the density is constant but has different
            values. This is depicted in figure \ref{fig:p2well1_2} (c) and
            agrees with the steady state relation \eqref{eq:steadyvarener}.
            We refer the reader to our previous work
            \cite{carrillo2018wellbalanced} for similar simulations
            considering varied scenarios with double-well potentials.
\end{enumerate}
\end{examplecase}
%
%

\begin{examplecase}[Collective behaviour: comparison of linear, Cucker-Smale and Motsch-Tadmor dampings]\label{ex:damping}
In this example we explore the impact of adding linear and nonlinear damping
terms to the general system \eqref{eq:generalsys}. The motivation for the
nonlinear damping comes from the field of collective behaviour, in which a
large amount of interacting individuals or agents organize their dynamics by
influencing each other and without the presence of a leader. Most of the
literature in collective behaviour is based on individual based models (IBMs)
which are particle descriptions considering the three basic effects of
attraction, repulsion and alignment of the individuals. The combination of
these three effects has proven to be very versatile and extends beyond the
typical animal applications for schools of fish \cite{katz2011inferring},
herds of mammals \cite{giardina2008collective} or flocks or birds
\cite{hildenbrandt2010self}. Indeed, these models are now playing a critical
role in understanding complex phenomena including consensus and
spatio-temporal patterns in diverse problems ranging from the evolution of
human languages \cite{cucker2004modeling} to the prediction of criminal
behaviour \cite{short2008statistical} and space flight formation
\cite{perea2009extension}.

There are plenty of works in the literature addressing the mean-field
derivation of kinetic and other macroscopic models from the original particle
descriptions \cite{carrillo2010asymptotic, carrillo2010particle,ha2008from}.
These derived hydrodynamic equations agree with our general system
\eqref{eq:generalsys} and model the attraction and repulsion effects via the
interaction potential $W(x)$. The third effect for collective behaviour is
alignment which in our system \eqref{eq:generalsys} it is achieved by means
of the nonlinear and nonlocal damping of the RHS of the momentum equation.
The most popular approach for the velocity consensus is the Cucker-Smale (CS)
model \cite{cucker2007emergent,cucker2007mathematics} which adapts the
momentum of a particle depending on the momentum and distance of the other
particles. Several authors have proposed refined variations of the CS model,
and among them we remark the weighted-normalized model by Motsch and Tadmor
(MS) \cite{motsch2011new} (which will be referred to in the following as the
MS model). It basically corrects the CS model by eliminating the
normalization over the total number of agents, which leads to inaccurate
behaviours in far-from-equilibrium scenarios. Instead, the MT model
introduces the concept of relative distances between agents with the cost,
however, of destroying the symmetry of the original CS model. For further
details on flocking and alignment with the CS and related models we refer the
reader to \cite{carrillo2016critical,
carrillo2017review,minakowski2019singular,choi2017emergent}.

The objective of this example is to illustrate the differences of adding to
the general system \eqref{eq:generalsys} linear damping, the CS or the MT
model. The damping term for each of them is
\begin{equation}\label{eq:dampings}
    \begin{cases}
    -\gamma\rho \bm{u}& \text{if linear damping,}\\[6pt]
    -\rho\displaystyle\int_{\R^{d}}\psi(\bm{x}-\bm{y})(\bm{u}(\bm{x})-\bm{u}(\bm{y}))\rho(\bm{y})\,d\bm{y}& \text{if Cucker-Smale damping,}\\[12pt]
    -\frac{\rho}{\psi \star \rho}\displaystyle \int_{\R^{d}}\psi(\bm{x}-\bm{y})(\bm{u}(\bm{x})-\bm{u}(\bm{y}))\rho(\bm{y})\,d\bm{y}& \text{if Motsch-Tadmor damping,}\\[6pt]
    \end{cases}
\end{equation}
where $\psi(x)$ is a nonnegative symmetric smooth function, called the
communication function, satisfying for this example
\begin{equation*}
\psi(x)=\frac{1}{\left(1+|x|^2\right)^\frac{1}{4}}.
\end{equation*}
It should be noted that the CS damping term in \eqref{eq:dampings} would
reduce to linear damping if the communication function $\psi(x)$ was a
constant function $\psi(x)=1$. In addition, the difference between the CS and
MT models is the normalization over $\psi \star \rho$ that is added to the MT
model to ensure that the damping term is independent of the total mass of the
system.

The simulation for this example is chosen to specifically address a
particular drawback of the CS model. This occurs in the evolution of two
groups of agents separated by a certain significant distance and whose masses
have different degrees of magnitude. What happens with the CS model is that
the damping term for the small group of agents is negligible due to the
normalization over the total number of agents in the system. This means that
those agents do not seek alignment from the beginning of the simulation, and
as a result the convergence towards alignment is delayed. On the contrary,
with the MT model the normalization over $\psi \star \rho$ in
\eqref{eq:dampings} allows to take the relative distances between the agents
into account, and the small group of agents reacts much faster to the effect
of the rest of agents. In the simulation we also add a Morse-like interaction
potential \cite{carrillo2014explicit,carrillo2015finite} of the form
$W(x)=-e^{-|x|^2/2}/\sqrt{2\pi}$, which quickly decays at large distances and
does not add any attraction between the two groups of agents. Note that we
are forced to add this attraction term to balance the pressure and thus allow
for our well-balanced scheme in Section \ref{sec:numsch}.

This configuration is depicted in figure \ref{fig:damping}. Specifically,
figure \ref{fig:damping} (a) and (b) shows with blue the initial conditions
for the density and the momentum. On the one hand, in the density there are
two groups of agents with mass of $0.9$ and $0.1$, satisfying
\begin{equation*}
\rho(x,t=0)=0.9\,\frac{e^{-\lt(x+1\rt)^2/2}}{\int_\R e^{-\lt(x+1\rt)^2/2} dx}+0.1\,\frac{e^{-\lt(x-11\rt)^2}}{\int_\R e^{-\lt(x-11\rt)^2} dx},\quad x\in [-5,14],
\end{equation*}
while on the other hand for the momentum the two groups have opposite velocity signs, in agreement with
\begin{equation*}
\rho u(x,t=0)=\begin{cases}
\,\,\,\;2\,\rho(x,t=0) & \text{if }x<5,\\
-2\,\rho(x,t=0) & \text{if }x\geq 5.
\end{cases}
\end{equation*}

\begin{figure}[ht!]
\begin{center}
\subfloat[Evolution of the density]{\protect\protect\includegraphics[scale=0.4]{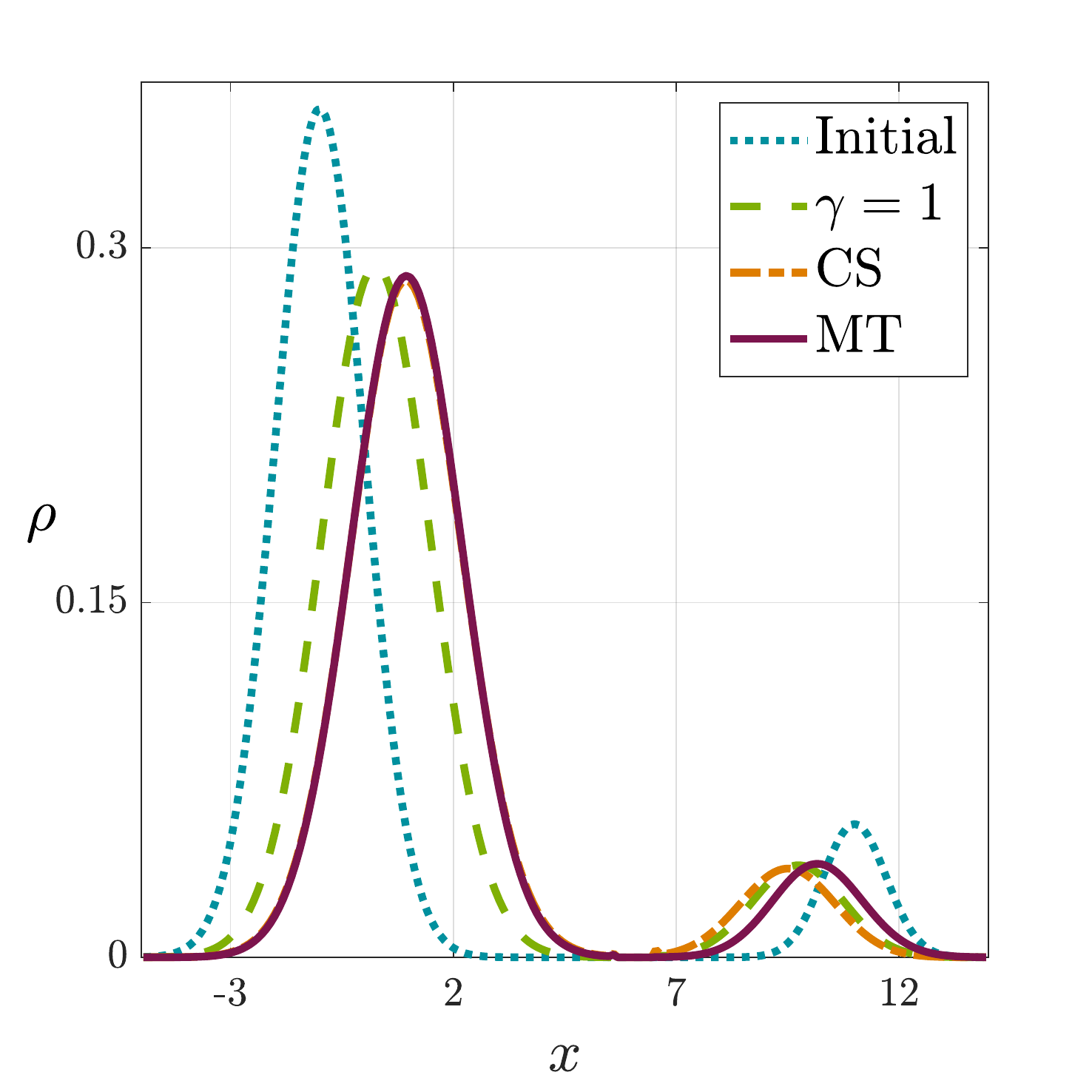}
\llap{\shortstack{%
        \includegraphics[scale=.14,trim={0 0 1.4cm 0cm},clip]{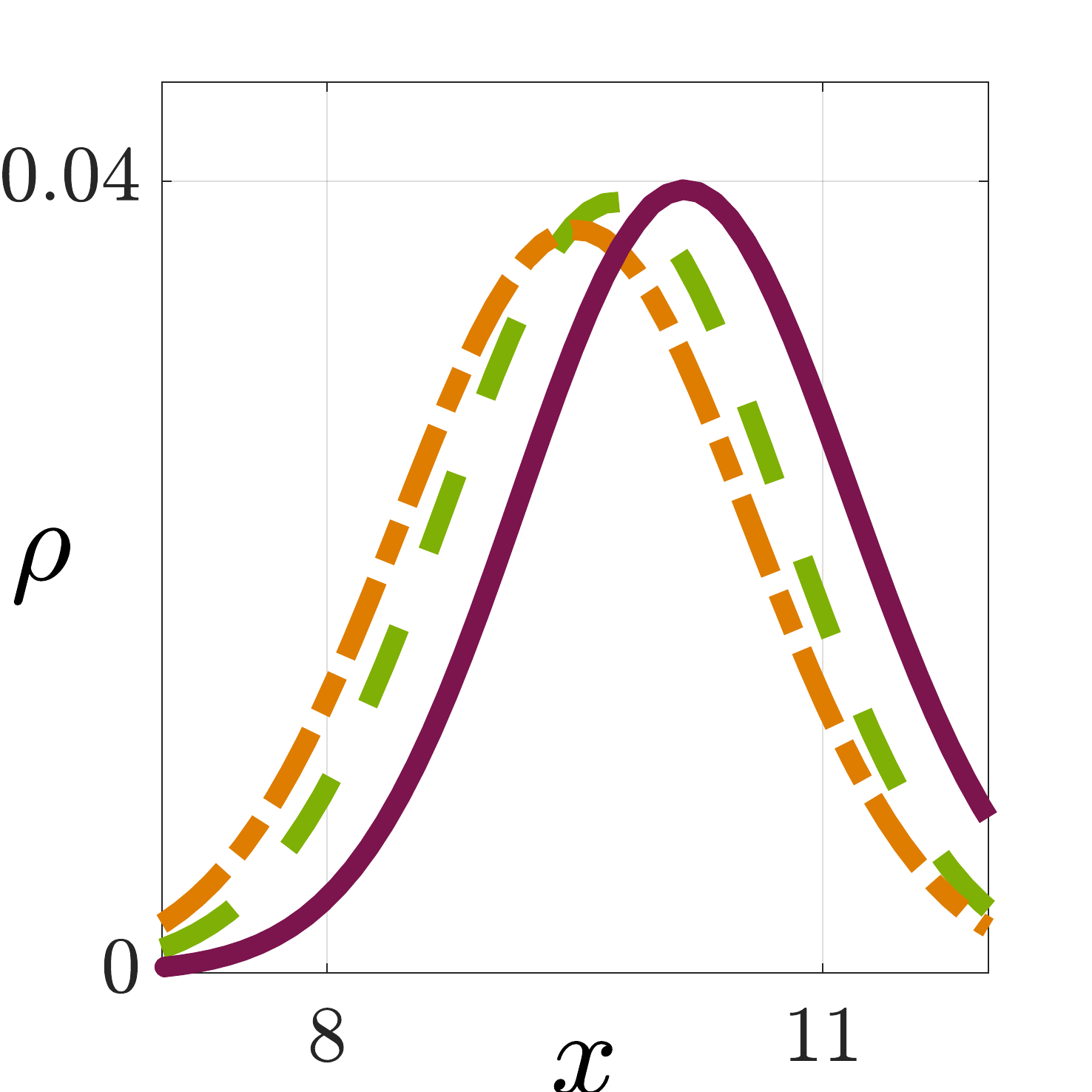}\\
        \rule{0ex}{0.65in}%
      }
  \rule{0.268in}{0ex}}
}
\subfloat[Evolution of the momentum]{\protect\protect\includegraphics[scale=0.4]{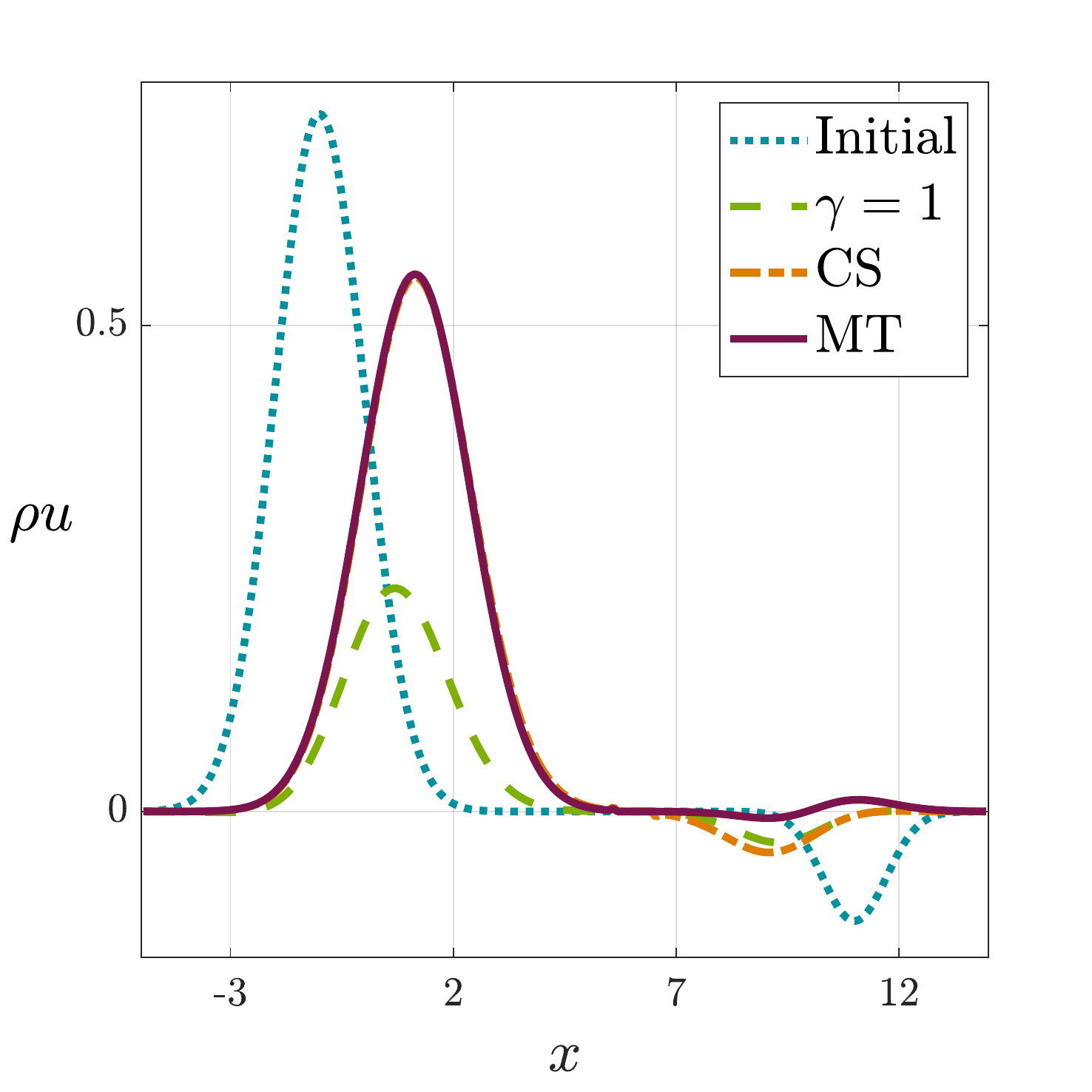}
\llap{\shortstack{%
        \includegraphics[scale=.14,trim={0 0 1.4cm 0cm},clip]{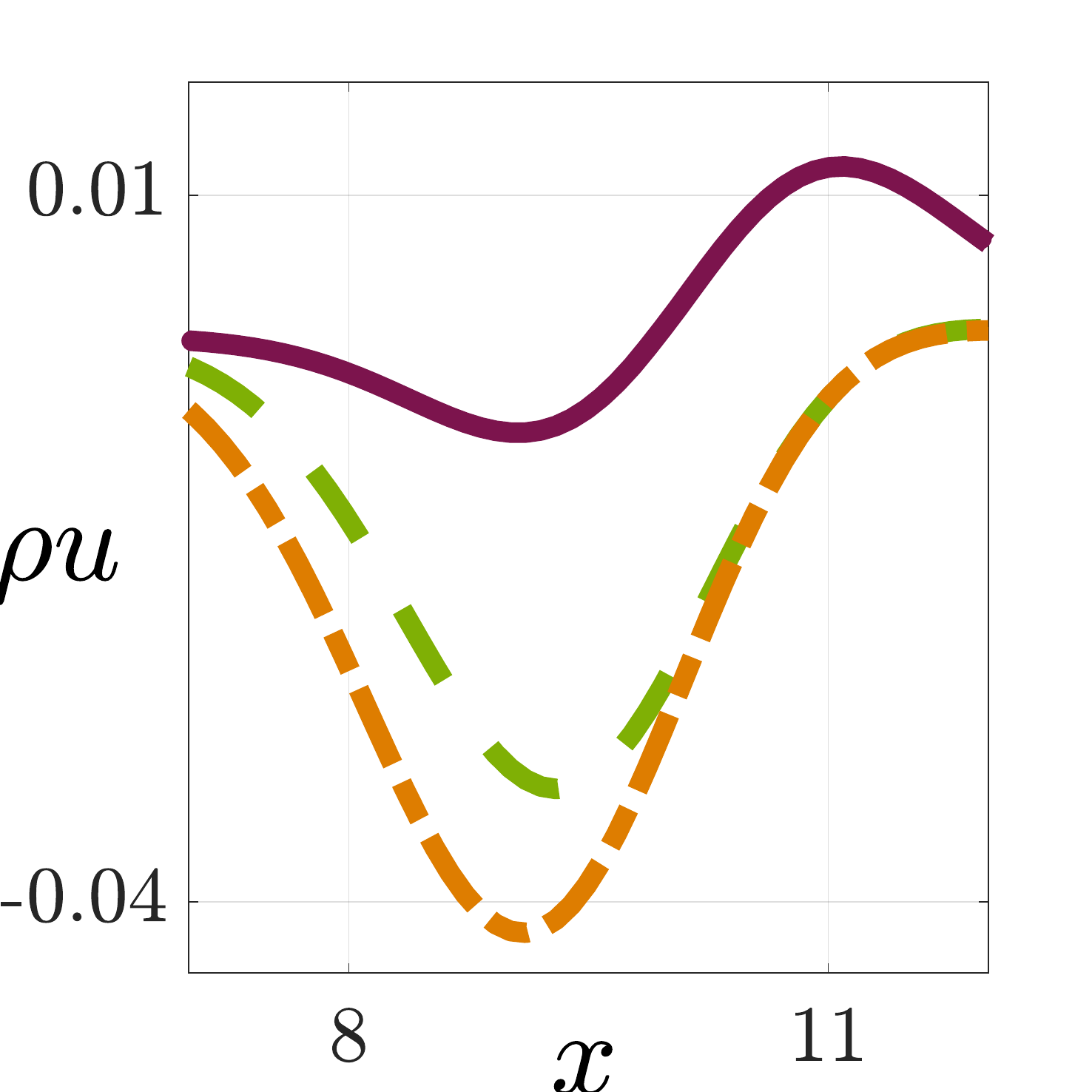}\\
        \rule{0ex}{0.65in}%
      }
  \rule{0.268in}{0ex}}
}\\
\subfloat[Evolution of the discrete free energy]{\protect\protect\includegraphics[scale=0.4]{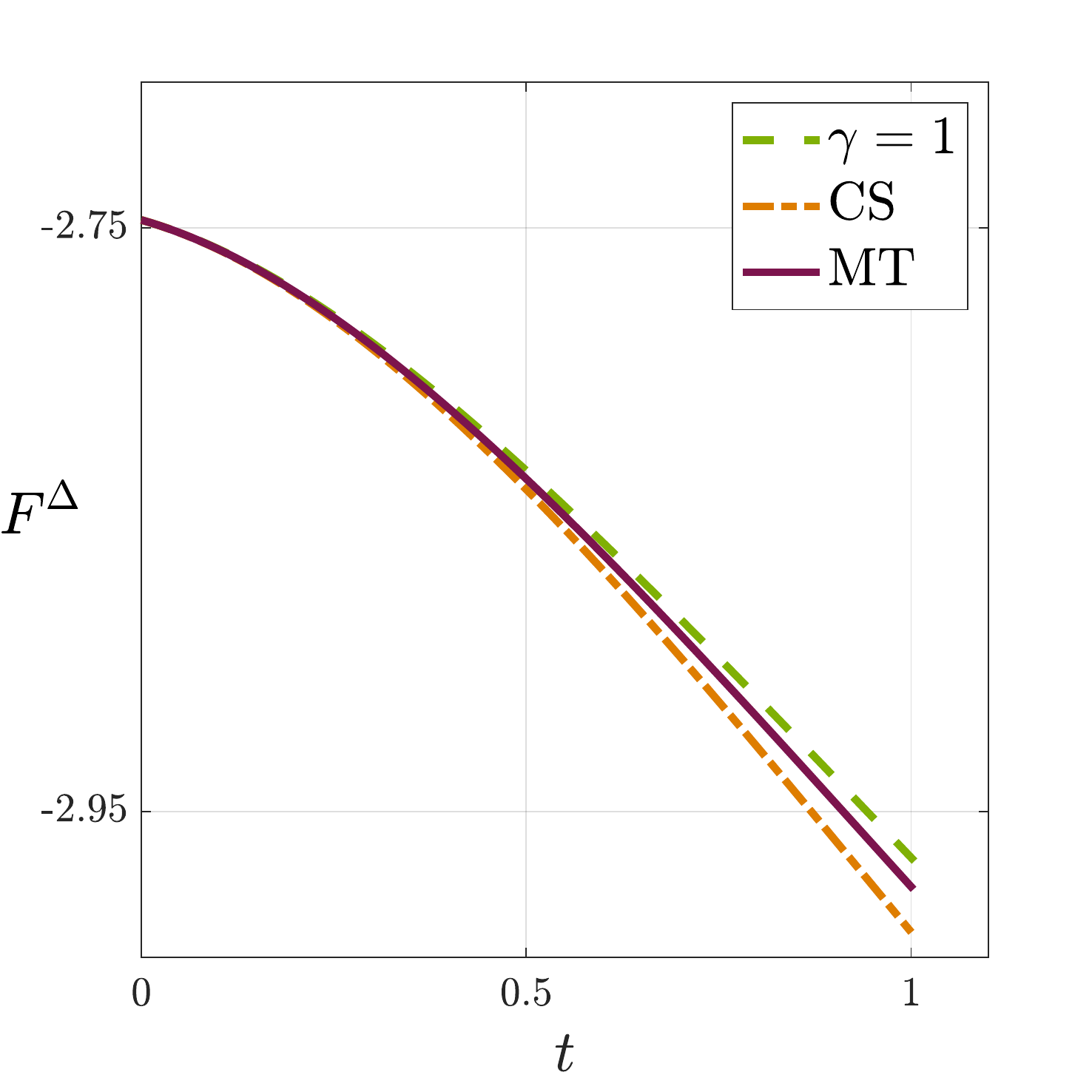}
}
\subfloat[Evolution of the discrete total energy]{\protect\protect\includegraphics[scale=0.4]{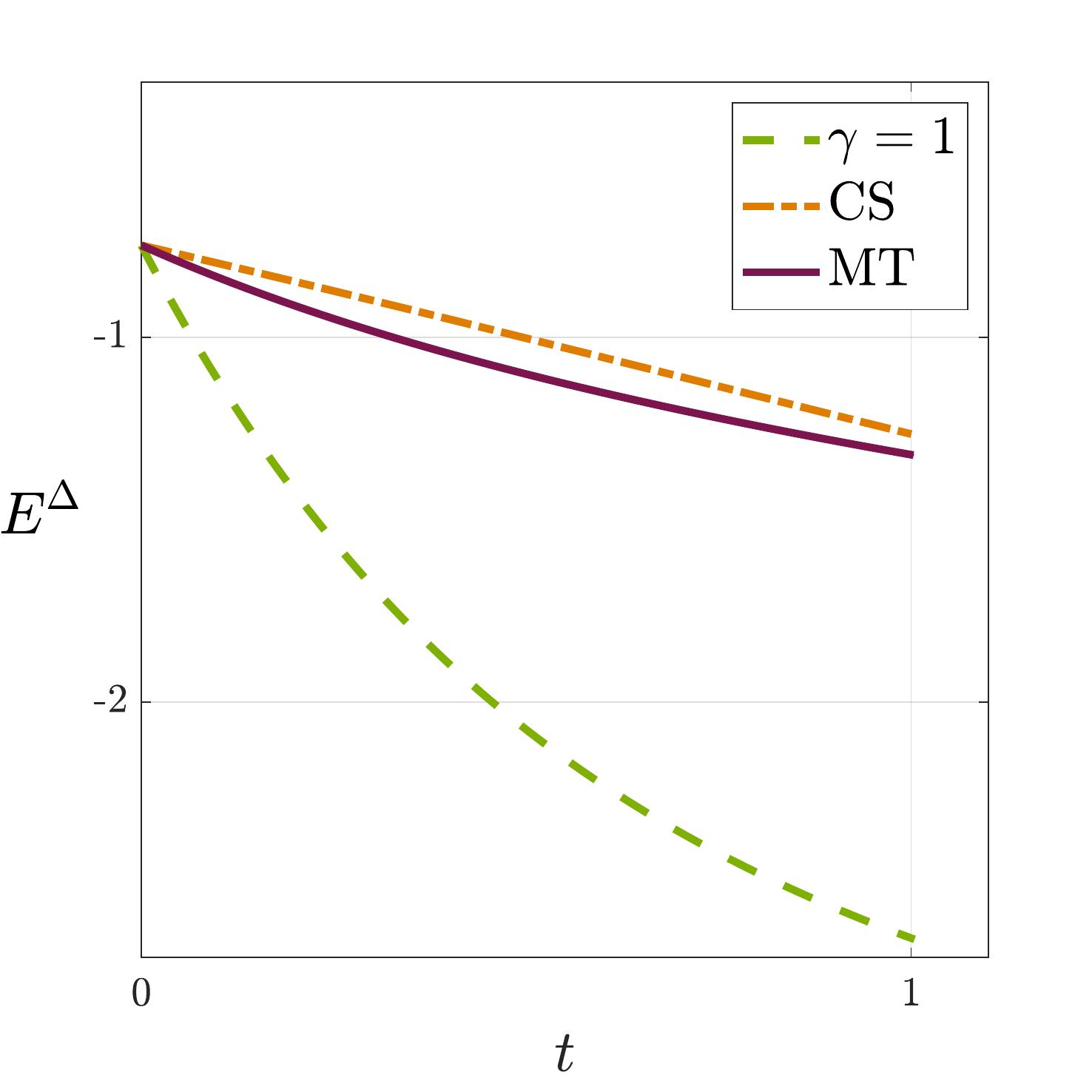}
}

\end{center}
\protect\protect\caption{\label{fig:damping} Simulation of Example  \ref{ex:damping} until $t=1$. $\gamma=1$ denotes the linear damping simulation, CS the Cucker-Smale simulation and MT the Motsch-tadmor simulation.}
\end{figure}

What we expect to happen in this situation is that the large group imposes
its velocity sign over the small group, so eventually all the agents align
with positive velocity. From the momentum simulation in figure
\ref{fig:damping} (b) we observe that after $t=1$ the MT model has already
changed the velocity sign of the small group from negative to positive, while
for the CS model the velocity is still negative. In general, linear damping
is the one that dissipates more momentum, as depicted in the momentum plot of
figure \ref{fig:damping} (b) and in the total energy plot of figure
\ref{fig:damping} (d). The free energy and total energy decay are similar for
both the CS and MT models. A similar numerical experiment was already conducted in \cite{carrillo2017review} using particle methods.

\end{examplecase}

%
%
\begin{examplecase}[Hydrodynamic Keller-Segel system]\label{ex:KS}
The Keller-Segel model has been widely employed for chemotactic aggregation
of biological populations such as cells, bacteria or insects. It models how
the production of a particular chemical by these organisms leads to
long-range attraction and eventually results in self-organization. Its first
formulation was proposed in \cite{keller1970initiation} and consisted in a
drift-diffusion equation for the density (which is obtained in the overdamped
limit of our system \eqref{eq:generalsys}) coupled with a diffusion equation
for the chemical concentration.

In this example we are interested in the hydrodynamic extension of the
Keller-Segel model proposed in \cite{chavanis2007kinetic} and takes into
account the inertia of the biological entities and has been proposed in
\cite{chavanis2007kinetic}. It follows the same structure as the generalized
Euler-Poisson system in the example \ref{ex:idealker} with the free energy
satisfying \eqref{eq:freeenergyPos} and the chemical concentration usually
taken as $S=W(x) \star \rho$ (see
\cite{carrillo2018ground,calvez2017geometry}). The homogeneous kernel $W(x)$
follows $W(x)=|x|^\alpha/\alpha$, where $\alpha>-1$ and $W(x)=\ln|x|$ when
$\alpha=0$ for convention. The difference with example \ref{ex:idealker} is
that here the pressure follows $P(\rho)=\rho^m$ with $m\geq1$, thus allowing
for compactly-supported steady states and vacuum in the density if $m>1$. We
refer the reader to \cite{bellomo2015toward} for more information about the
Keller-Segel model and the diffusion equation for the chemical concentration.

\begin{figure}[ht!]
\begin{center}
\subfloat[Evolution of the density]{\protect\protect\includegraphics[scale=0.4]{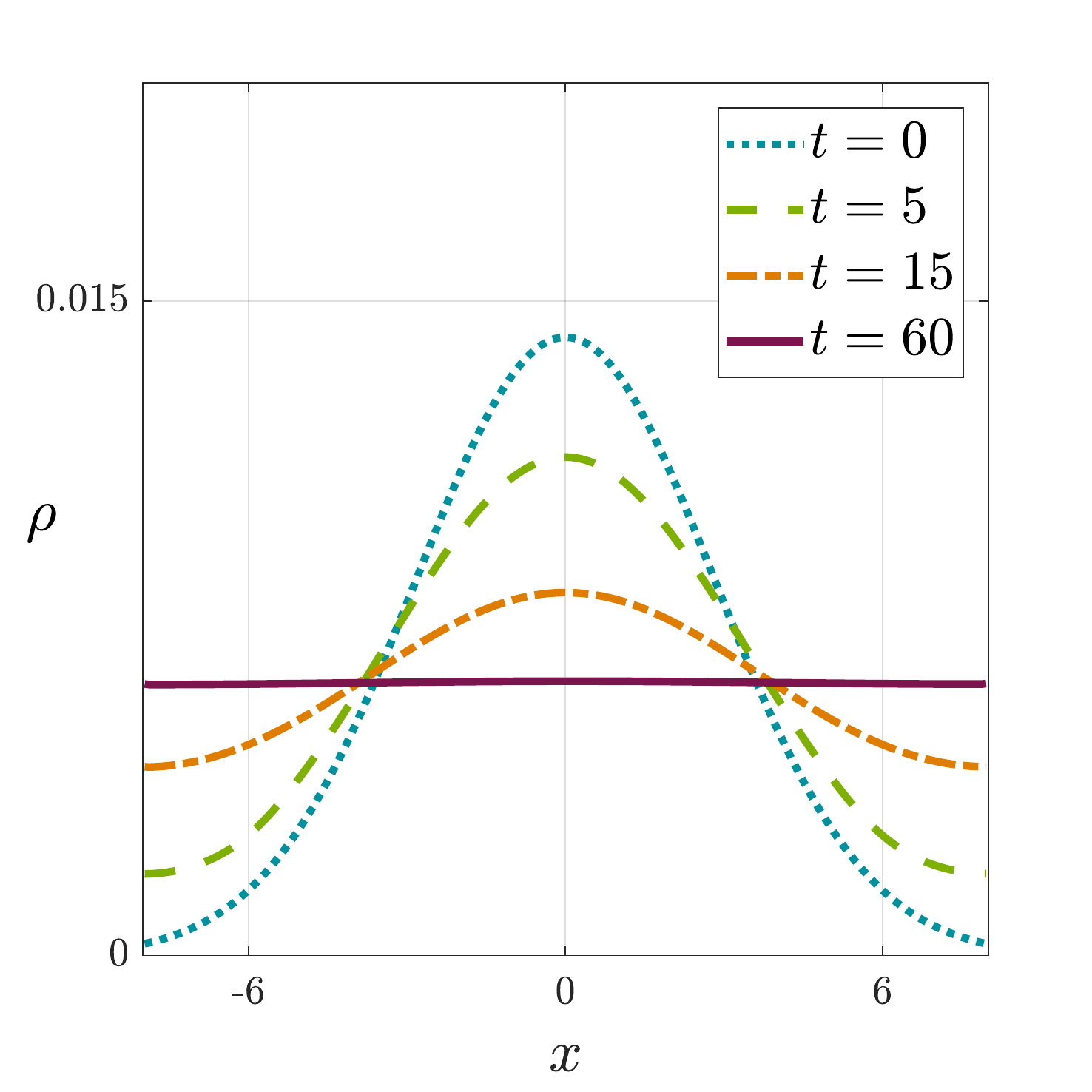}
}
\subfloat[Evolution of the momentum]{\protect\protect\includegraphics[scale=0.4]{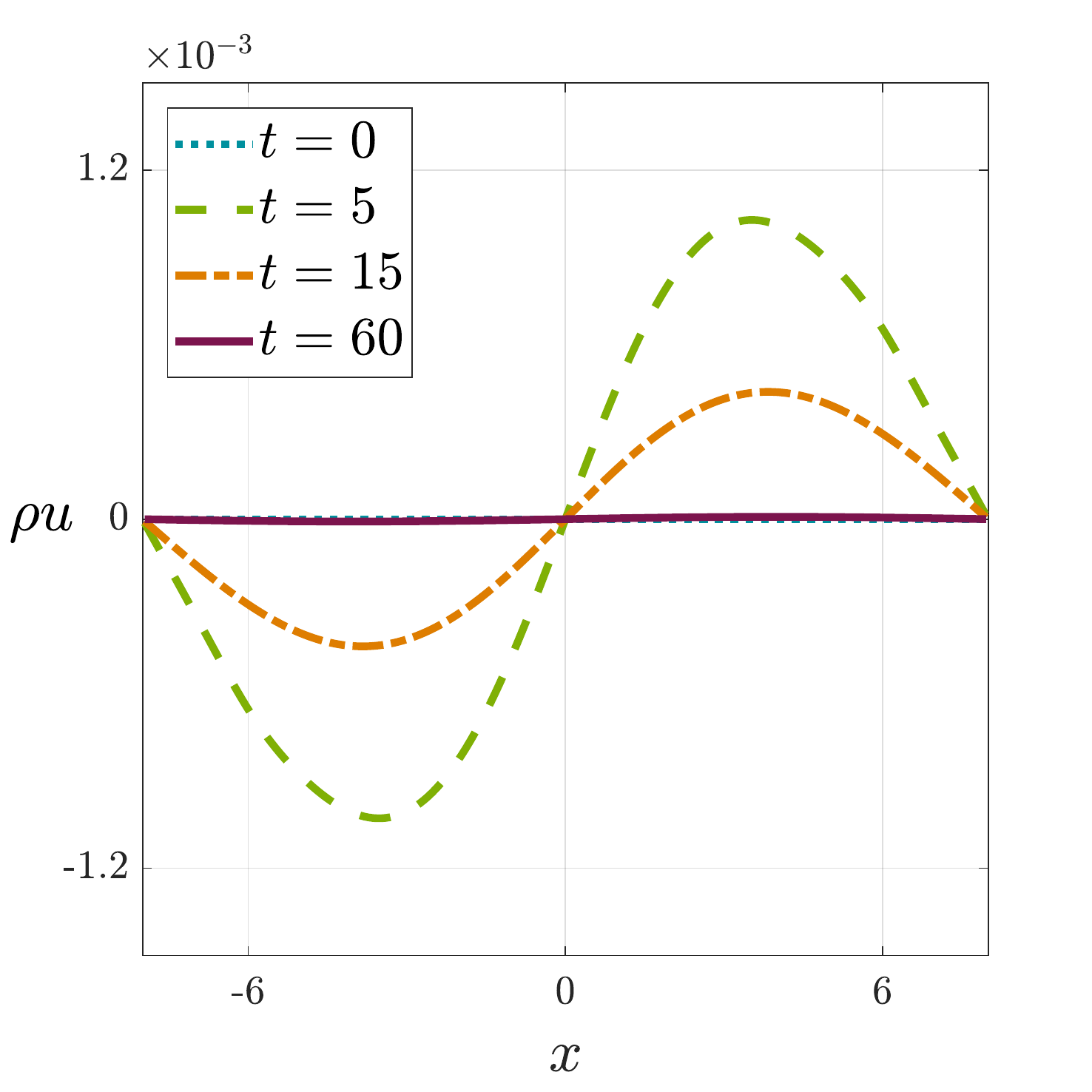}
}\\
\subfloat[Evolution of the variation of the free energy]{\protect\protect\includegraphics[scale=0.4]{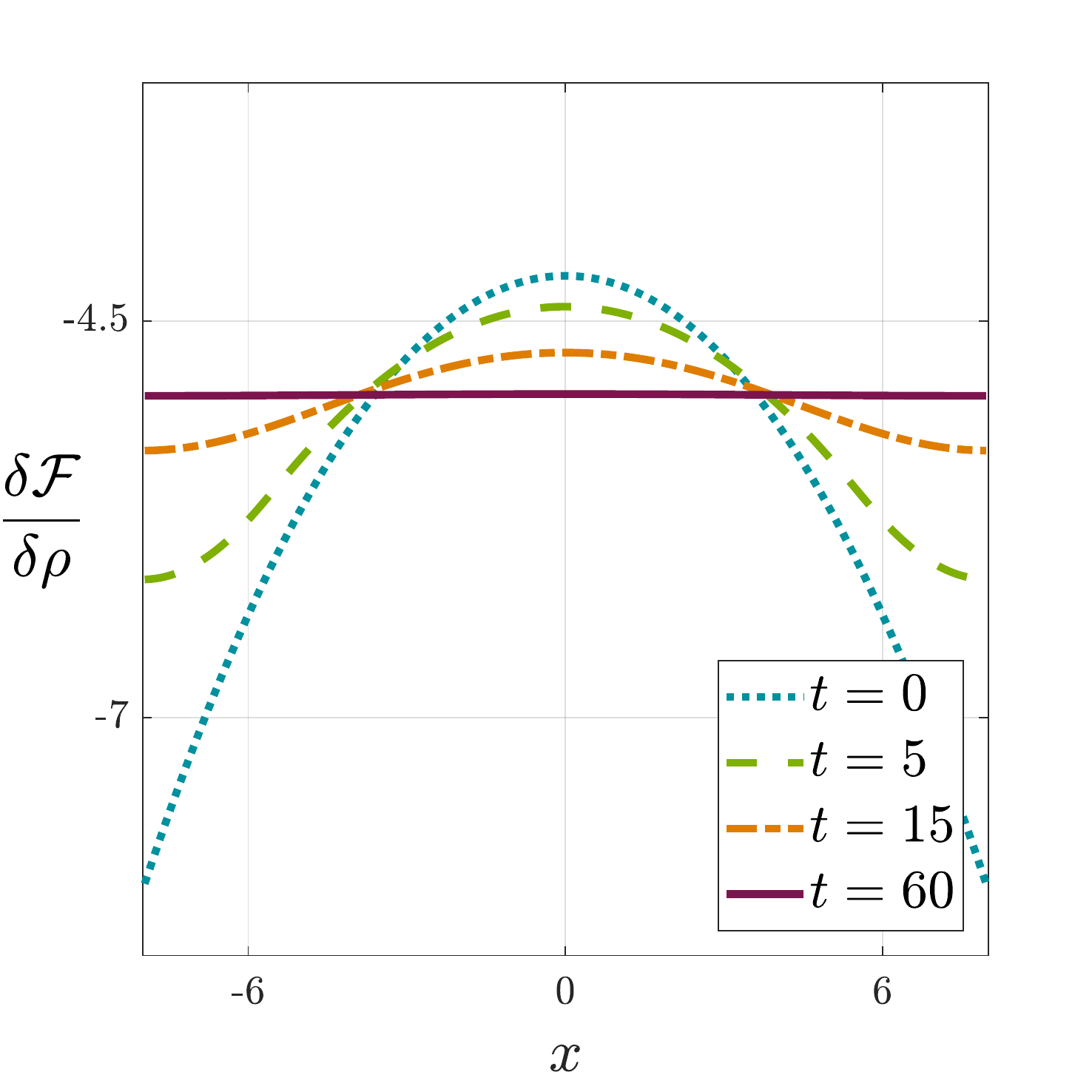}
}
\subfloat[Evolution of the total energy and free energy]{\protect\protect\includegraphics[scale=0.4]{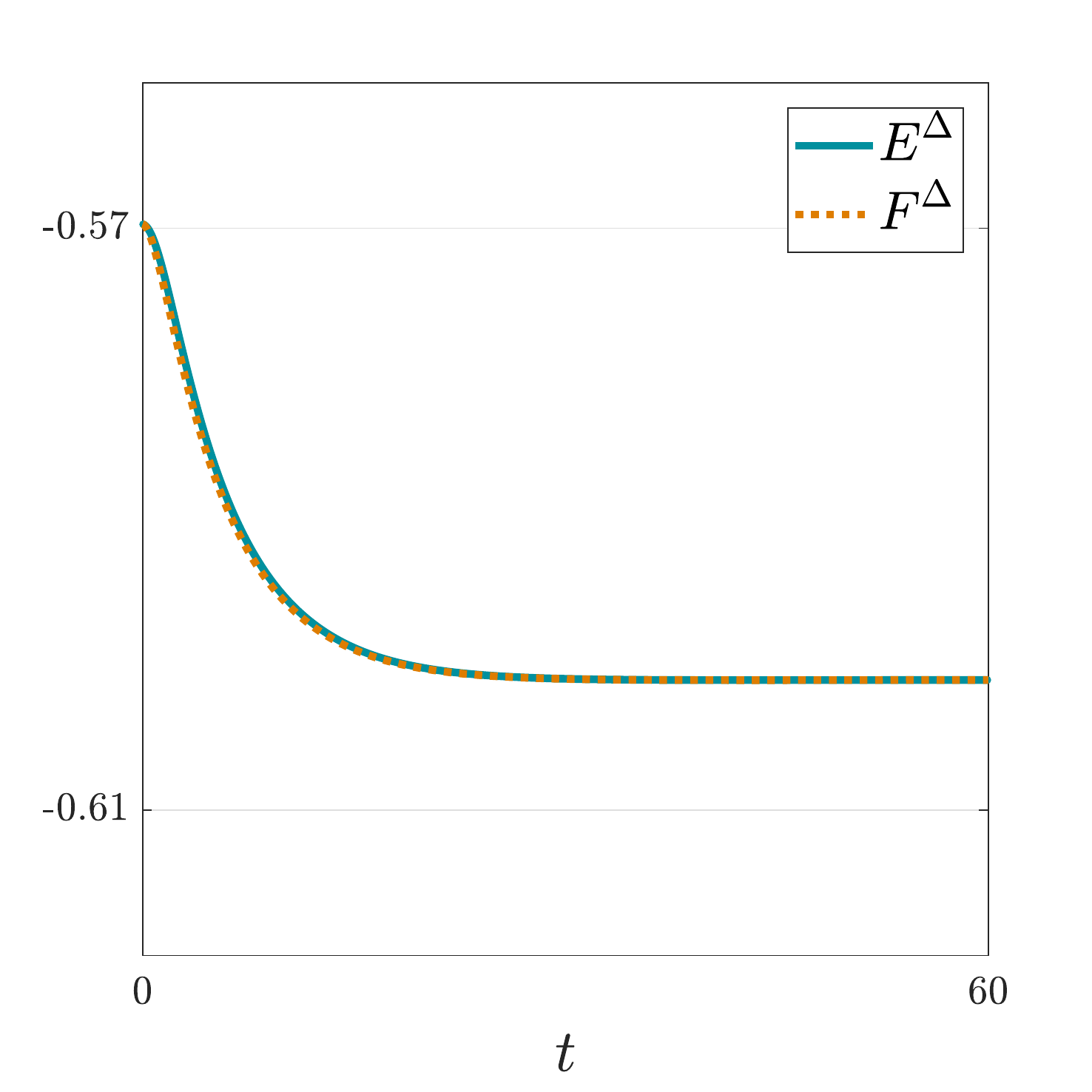}
}
\end{center}
\protect\protect\caption{\label{fig:KS_1} Temporal evolution of the Keller-Segel system with $P=\rho$, $W(x)=\ln|x|$ and initial conditions \eqref{eq:iniKS} with $M=0.1$. Global-in-time solution.}
\end{figure}

In our previous work \cite{carrillo2018wellbalanced} we applied our first-
and second-order well-balanced scheme to investigate the competition between
the attraction from the local kernel $W(x)$ and the repulsion caused by the
diffusion of the pressure $P(\rho)$. For the overdamped Keller-Segel model
there are basically three possible regimes \cite{calvez2017equilibria,
calvez2017geometry}, which result from adequately tuning the parameters
$\alpha$ in the kernel $W(x)$ and $m$ in the pressure $P(\rho)$: diffusion
dominated regime ($m > 1 -\alpha$), balanced regime ($m = 1 -\alpha$) where a
critical mass separates self-similar and blow-up behaviour, and
aggregation-dominated regime ($m < 1 -\alpha$). Results with the momentum
equation included, and thus inertia, are still quite limited in the
literature, with only some specific scenarios studied
\cite{carrillo2016critical, carrillo2018longtime}. In our previous work
\cite{carrillo2018wellbalanced} we investigated the role of inertia for a
choice of parameters of $\alpha=0.5$, $m = 1.5$ and $\alpha=-0.5$, $m = 1.3$,
which led to a diffusion-dominated and aggregation-dominated regimes,
respectively.

For this example we aim to explore the case of $\alpha=0$ which leads to the
singular potential $W(x)=\ln|x|$. Initially, for the two first simulations of
this example we set $m=1$ so that $P(\rho)=\rho$. In the overdamped limit
this scenario corresponds to the balanced regime since $m = 1 -\alpha$, and
there is a critical mass separating the global-in-time from the finite-time
blowup solution. We aim to run two simulations with identical initial
conditions which differ only in a multiplicative constant for the density
which allows to set a different mass of the system. The objective is to find
global-in-time and finite-time blowup solutions by only changing the mass of
the system. With that aim we set the initial conditions as
\begin{equation}\label{eq:iniKS}
\rho(x,t=0)=M\frac{e^{-\lt(x\rt)^2/16}}{\int_\R e^{-\lt(x\rt)^2/16} dx}, \quad \rho u(x,t=0)=0,\quad x\in [-8,8],
\end{equation}
with $M$ being the mass of the system.

For the first simulation we set the mass of the system to be $M=0.1$. The
results are shown in figure \ref{fig:KS_1} where the numerical solution is
clearly global-in-time and diffusion-dominated. Eventually the steady state
is reached,
\begin{equation*}
\frac{\delta \mathcal{F}}{\delta \rho}=\Pi'(\rho)+H(x,\rho)=\ln(\rho)+\ln|x|\star \rho=\text{constant}\ \text{on}\ \mathrm{supp}(\rho)\ \text{and}\ u=0.
\end{equation*}
From figure \ref{fig:KS_1} (a) we observe that the solution is completely
diffused and the final profile for the density is uniform. From figure
\ref{fig:KS_1} (c) we notice that the variation of the free energy with
respect to the density is constant once the steady state is reached, and from
figure \ref{fig:KS_1} (d) we remark how the total and free energy decay
during the temporal evolution.

\begin{figure}[ht!]
\begin{center}
\subfloat[Evolution of the density]{\protect\protect\includegraphics[scale=0.4]{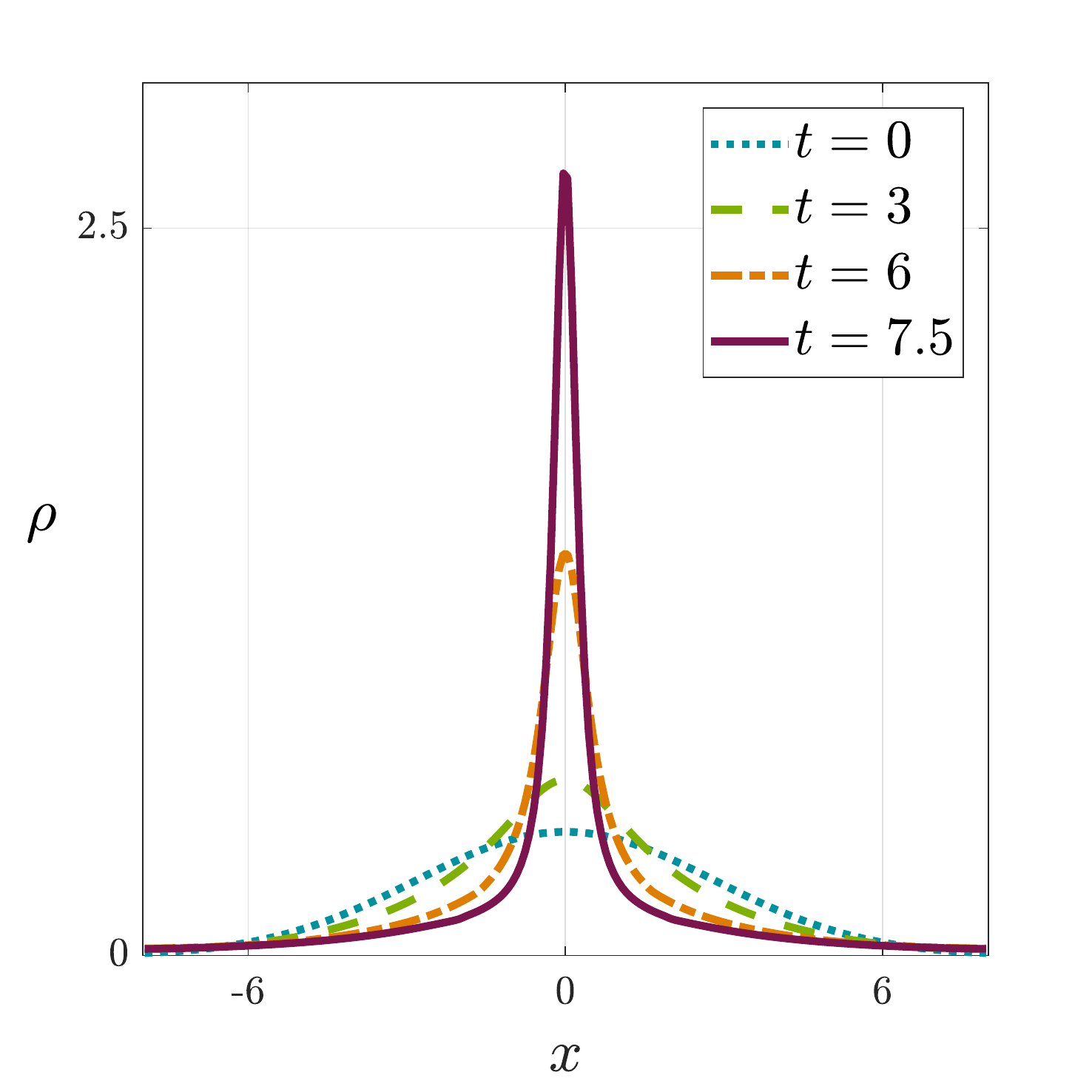}
}
\subfloat[Evolution of the momentum]{\protect\protect\includegraphics[scale=0.4]{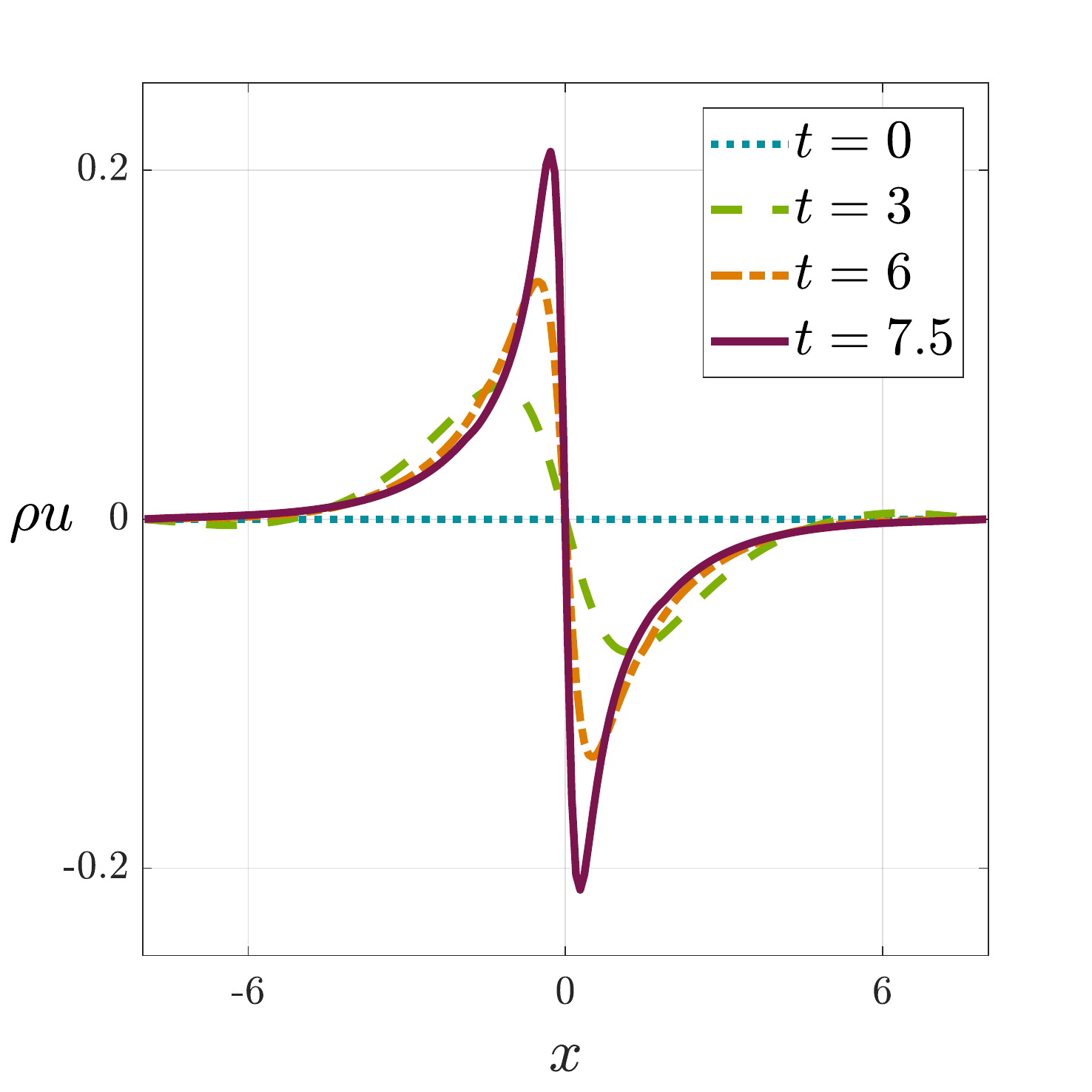}
}\\
\subfloat[Evolution of the variation of the free energy]{\protect\protect\includegraphics[scale=0.4]{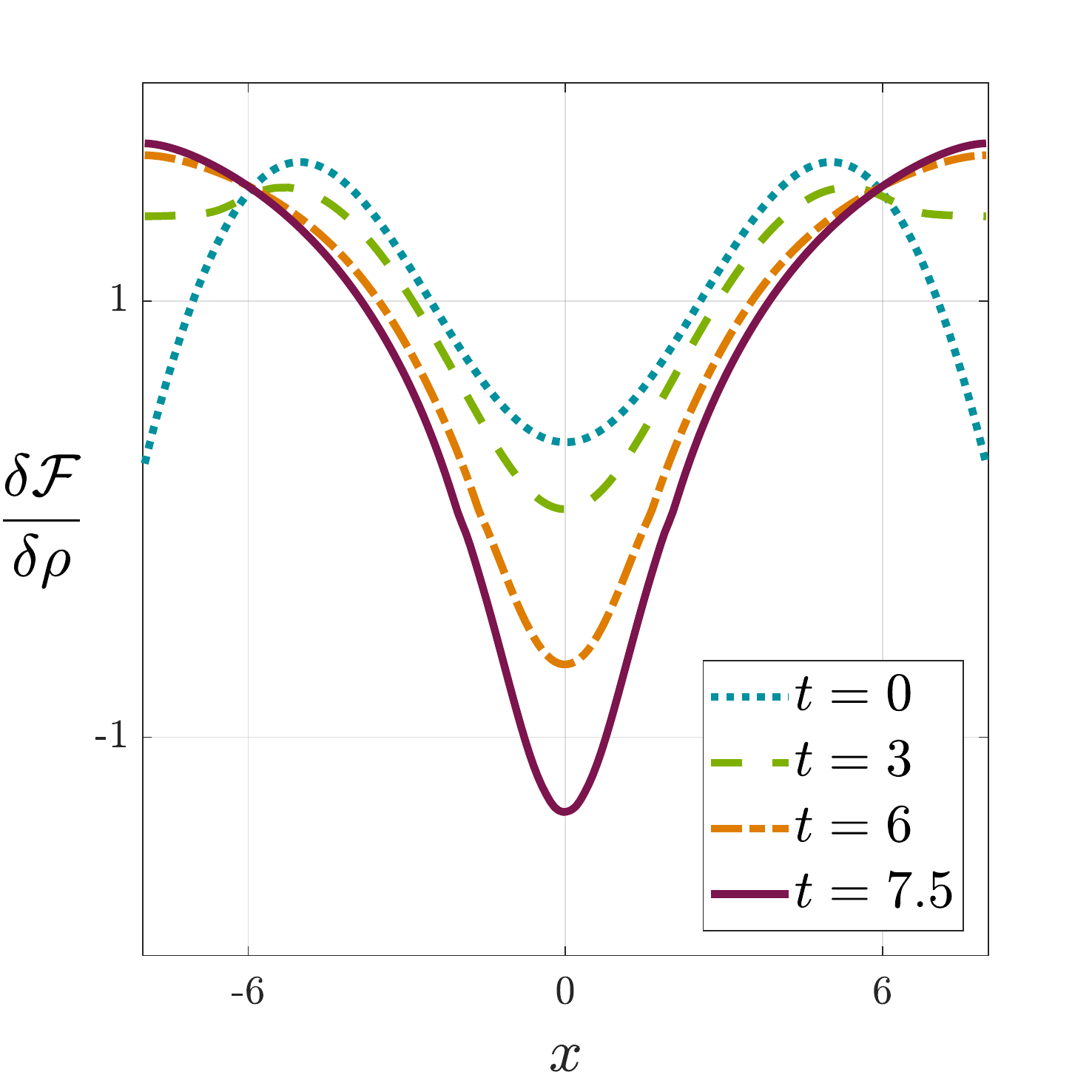}
}
\subfloat[Evolution of the total energy and free energy]{\protect\protect\includegraphics[scale=0.4]{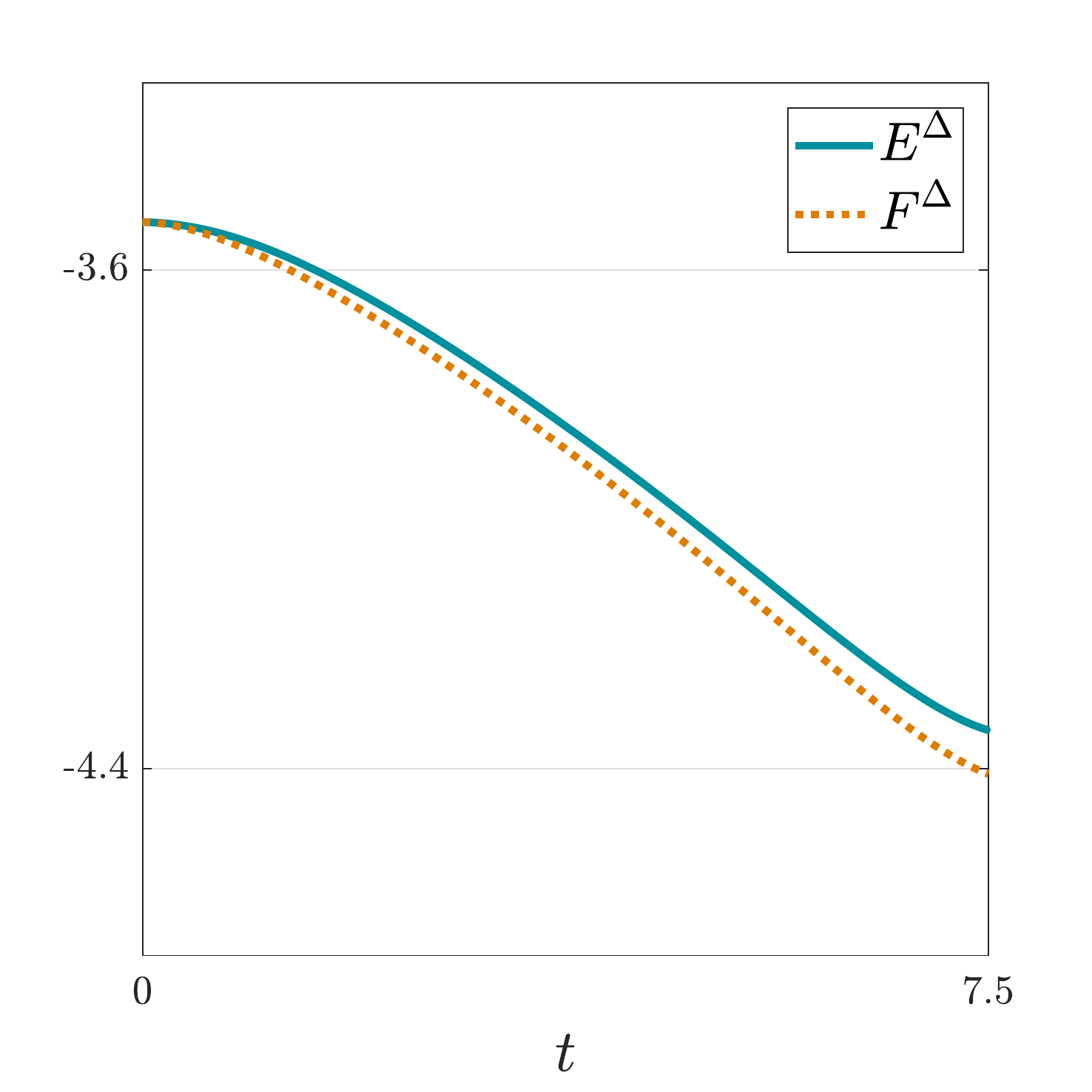}
}
\end{center}
\protect\protect\caption{\label{fig:KS_2} Temporal evolution of the Keller-Segel system with $P=\rho$, $W(x)=\ln|x|$ and initial conditions \eqref{eq:iniKS} with $M=3$. Finite-time blowup solution.}
\end{figure}

For the second simulation we select a mass of $M=3$ while keeping the same
initial conditions as in \eqref{eq:iniKS}. As displayed in figure
\ref{fig:KS_2}, the solution now presents a finite-time blowup around
$t\approx 7.5$, leading to an aggregation-dominated behaviour. Figure
\ref{fig:KS_2} (a) reveals that the density is concentrated towards the
middle of the domain while figure \ref{fig:KS_2} (b) shows that the momentum
presents an infinite slope when reaching the blowup. From figure
\ref{fig:KS_2} (d) we notice that the total and free energy temporally decay
until the blowup occurs.

\begin{figure}[ht!]
\begin{center}
\subfloat[Evolution of the density]{\protect\protect\includegraphics[scale=0.4]{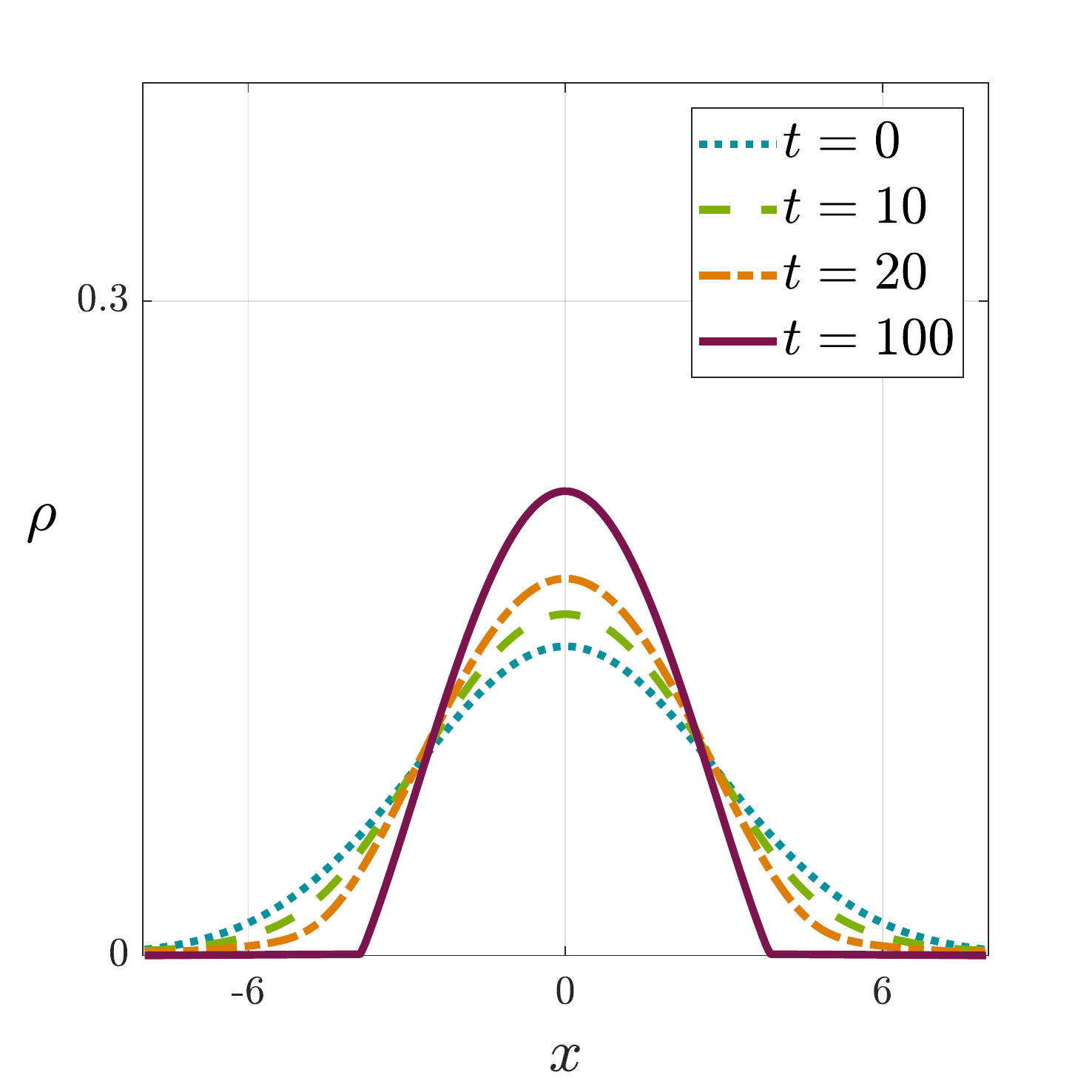}
}
\subfloat[Evolution of the momentum]{\protect\protect\includegraphics[scale=0.4]{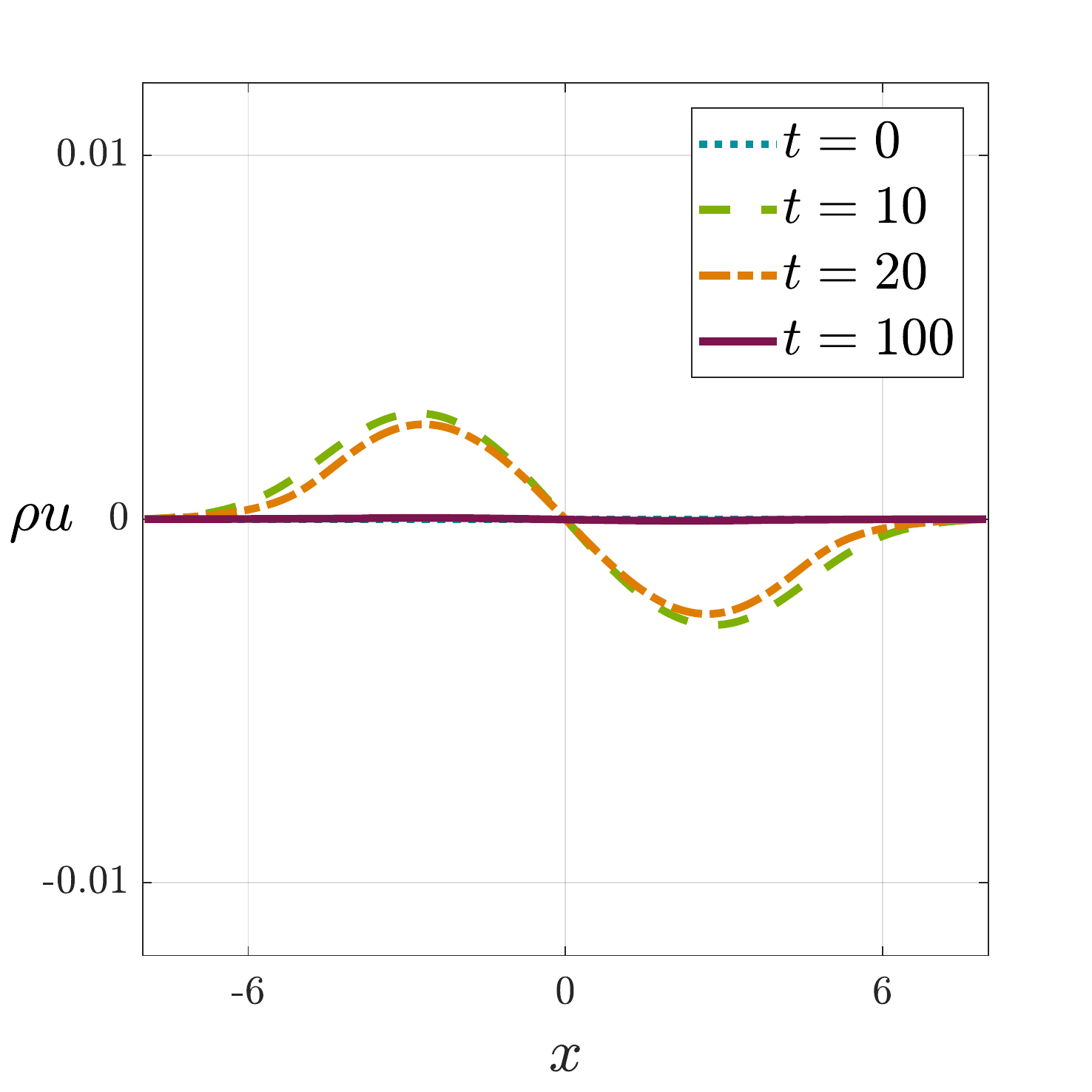}
}\\
\subfloat[Evolution of the density]{\protect\protect\includegraphics[scale=0.4]{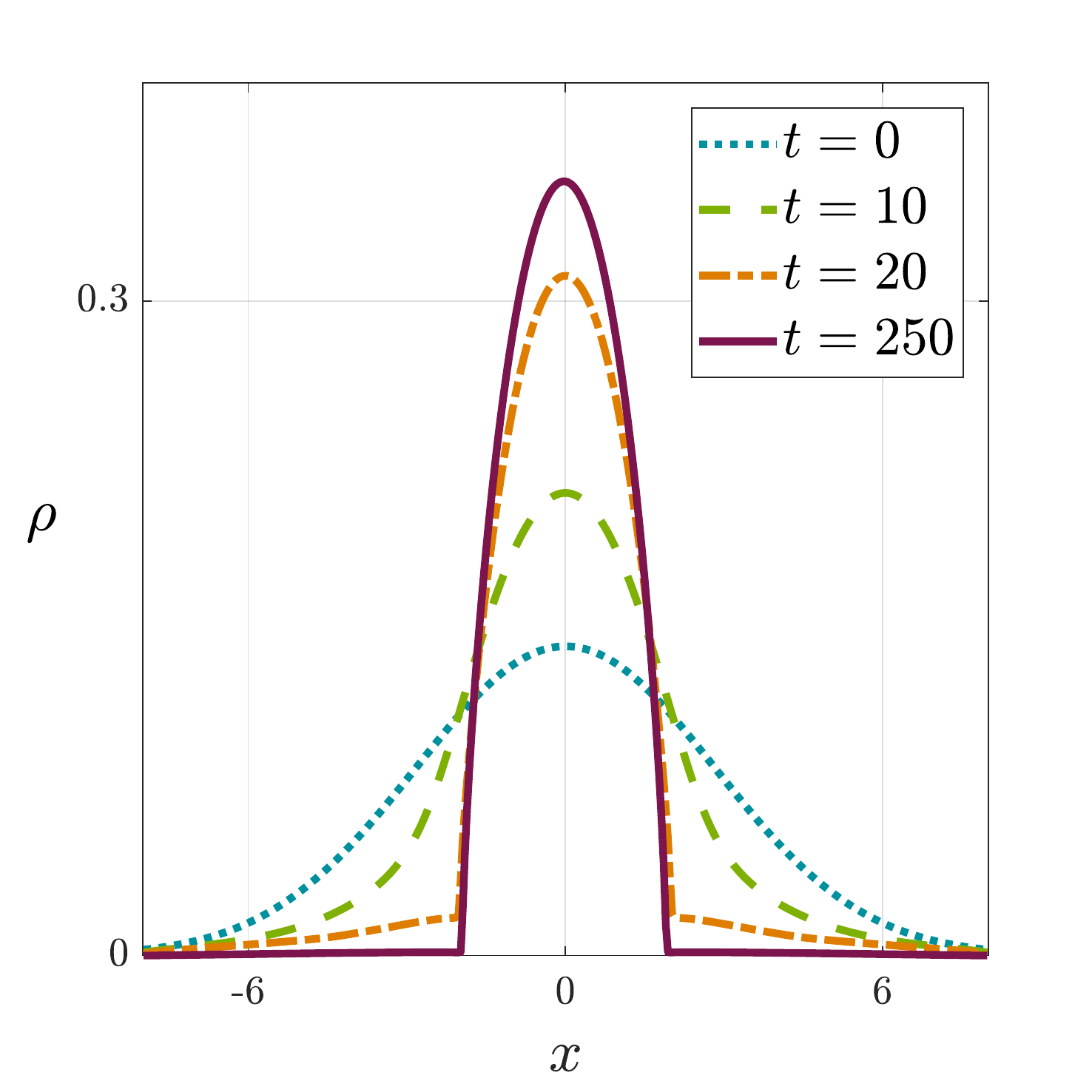}
}
\subfloat[Evolution of the total energy and free momentum]{\protect\protect\includegraphics[scale=0.4]{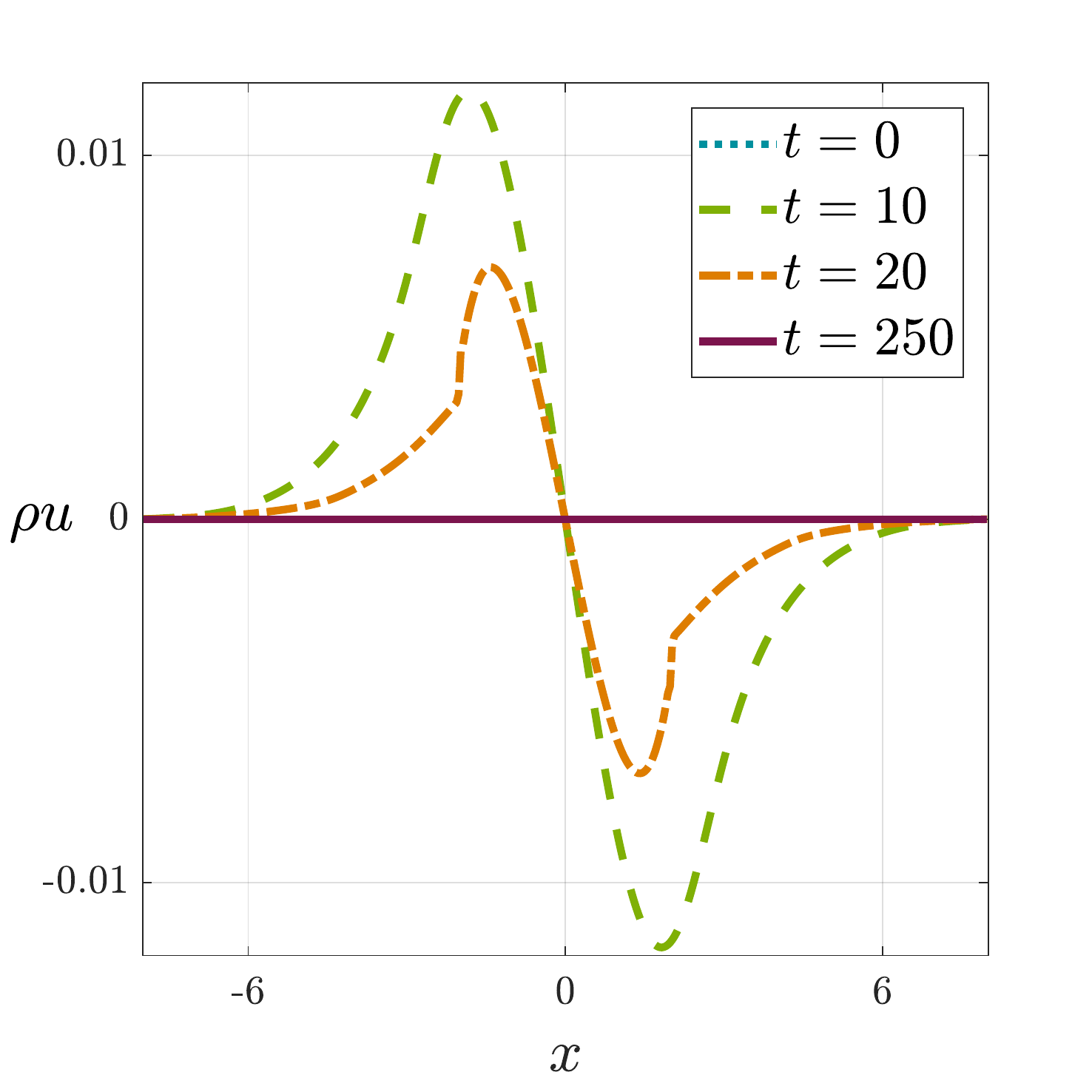}
}
\end{center}
\protect\protect\caption{\label{fig:KS_3} Temporal evolution of the Keller-Segel system with $W(x)=\ln|x|$ and initial conditions \eqref{eq:iniKS} with $M=1$. Compactly-supported steady state. (a)-(b) $P=3\rho^2$, steady state $t=250$, (c)-(d) $P=3\rho^{2.5}$, steady state $t=100$.}
\end{figure}

Finally, we also aim to compare diffusion-dominated solutions where $m>1$
leading to steady states that are compactly supported. For this purpose we
set the initial conditions to be \eqref{eq:iniKS} with a mass of $M=1$. For
comparison we look at two scenarios with $P=3\rho^2$ and $P=3\rho^{2.5}$ so
that the exponent $m$ is different. In figure \ref{fig:KS_3} we depict the
final steady states that arise from the two choices of $m$. From figures
\ref{fig:KS_3} (a) and (c) we observe that the final compactly-supported
density profiles have slightly different shapes due to the balances between
the attraction from the local kernel $W(x)$ and the repulsion caused by the
diffusion of the pressure $P(\rho)$.

\end{examplecase}

%
%
\section*{Acknowledgements}
Jos\'e A. Carrillo was partially supported by EPSRC grant number
EP/P031587/1. Manuel J. Castro acknowledges financial support from the
Spanish Government and FEDER through the coordinated Research project
RTI2018-096064-B-C21 and from Junta de Andaluc\'ia and FEDER through the
project UMA18-FEDERJA-161. Serafim Kalliadasis acknowledges financial support
from the Engineering and Physical Sciences Research Council (EPSRC) of the UK
through Grant No. EP/L020564. And Sergio~P.~Perez acknowledges financial
support from the Imperial College President's PhD Scholarship and thanks
Universidad de M\'alaga for hospitality during a visit in October 2019.

%
%

\appendix

%
%

\section{Details about the fifth-order Gaussian quadrature}\label{app:gauss}

In this section we detail the procedure to approximate an integral via the fifth-order Gaussian quadrature. Briefly, Gaussian quadratures of $n$ points yield exact values of integrals for polynomials of degree up to $2n-1$. In our case we implement Gaussian quadratures of $3$ points, so that the spatial error is of the order $\mathcal{O}(\Delta x^5)$. In this way we do not limit the order of the high-order finite volume schemes, since the order of this quadrature is always higher or equal than ones in the reconstructions of the scheme.

The approximation of a function $f(x)$ within an interval $[-1,1]$ via a three-point Gaussian quadrature \cite{abramowitz1988handbook} satisfies 
\begin{equation*}
    \int_{-1}^{1}f(x)\,dx=\frac{5}{9}f\lt(-\sqrt{\frac{3}{5}}\rt)+\frac{8}{9}f\lt(0\rt)+\frac{5}{9}f\lt(\sqrt{\frac{3}{5}}\rt).
\end{equation*}
In our case, this integration is performed within each of the finite volume cells in  $[x_{i-1/2},x_{i+1/2}]$ centred at $x_i$ and with size $\Delta x$. The cell averages in the finite volume schemes are also divided over $\Delta x$. As a result, the transformation of weights and spatial coordinates for the Gaussian quadrature from $[-1,1]$ to $[x_{i-1/2},x_{i+1/2}]$ results in
\begin{equation*}
    \frac{1}{\Delta x}\int_{x_{i-1/2}}^{x_{i+1/2}}f(x)\,dx=\frac{5}{18}f\lt(x_i-\frac{\Delta x}{2}\sqrt{\frac{3}{5}}\rt)+\frac{4}{9}f\lt(x_i\rt)+\frac{5}{18}f\lt(x_i+\frac{\Delta x}{2}\sqrt{\frac{3}{5}}\rt).
\end{equation*}
From this last expression we get that the coefficients $\alpha_j$ for $j\in\{1,2,3\}$ satisfy
\begin{equation*}
    \alpha_1=\frac{5}{18},\quad \alpha_2=\frac{4}{9},\quad\alpha_3=\frac{5}{18},
\end{equation*}
with the spatial nodes $x_i^j$ within the cell $i$ for the evaluations of the integrand located at
\begin{equation*}
    x_i^1=x_i-\frac{\Delta x}{2}\sqrt{\frac{3}{5}},\quad x_i^2=x_i,\quad
    x_i^3=x_i+\frac{\Delta x}{2}\sqrt{\frac{3}{5}}.
\end{equation*}

%
%

\section{Details about the positive-density CWENO reconstruction}\label{app:CWENO}

In this section we proceed to summarize the third- and fifth-order CWENO reconstructions of a generic function $g(x)$ whose cell averages $\{g_i\}$, defined as in \eqref{eq:cellU}, are taken as input. Each cell has size $\Delta x$, is centred at $\{x_i\}$ and is contained in the region $\left[x_{i-1/2},x_{i+1/2}\right]$. These reconstructions are applied in \eqref{eq:reconsCWENO} to compute the high-order reconstructions $R_i^{\rho}(x)$, $R_i^{\rho u}(x)$ and $R_i^{K}(x)$. For further details about the CWENO algorithm, we refer the reader to \cite{cravero2016accuracy,levy2000compact,levy1999central,capdeville2008central}.

In subsection \ref{subsec:thirdCWENO} we detail the third-order reconstruction, in subsection \ref{subsec:fifthCWENO} we proceed with the fifth-order reconstruction, and finally in subsection \ref{subsec:positiveCWENO} we end up by summarizing the positive-density limiters from \cite{zhang2010maximum}, which are essential to prove the positivity of the overall finite volume scheme. 

%
%

\subsection{Third-order CWENO reconstruction}\label{subsec:thirdCWENO}

The third-order CWENO reconstruction from \cite{levy1999central} satisfies
\begin{equation}\label{eq:CWENO3}
    R_i^g(x)=\underline{g_i}+\underline{g_i'}(x-x_i)+\frac{1}{2}\underline{g_i''}(x-x_i)^2,
\end{equation}
with $\underline{g_i}$, $\underline{g_i'}$ and $\underline{g_i''}$ resulting from
\begin{equation}\label{eq:coeffCWENO3}
\begin{split}
\underline{g_i} &= w_{i-1}^i\lt(\Tilde{g}_{i-1}+\Delta x\,\Tilde{g}_{i-1}'+\frac{1}{2}\Delta x^2\, \Tilde{g}_{i-1}''\rt)+w_i^i\,\Tilde{g}_j+w_{i+1}^i \lt(\Tilde{g}_{i+1}-\Delta x\,\Tilde{g}_{i+1}'+\frac{1}{2}\Delta x^2\,\Tilde{g}_{i+1}''\rt),\\[3pt] 
\underline{g_i'} &= w_{i-1}^i\lt(\Tilde{g}_{i-1}'+\Delta x\, \Tilde{g}_{i-1}''\rt)+w_i^i\,\Tilde{g}_i'+w_{i+1}^i\lt(\Tilde{g}_{i+1}'-\Delta x\,\Tilde{g}_{i+1}''\rt), \\[3pt]
\underline{g_i''} &= w_{i-1}^i\,\Tilde{g}_{i-1}''+w_i^i\,\Tilde{g}_i''+w_{i+1}^i\,\Tilde{g}_{i+1}'',\\[3pt]
\end{split}
\end{equation}
and $\Tilde{g}_k$, $\Tilde{g}_k'$ and $\Tilde{g}_k''$, for $k=\{i-1,i,i+1\}$, being computed as
\begin{equation*}
\begin{split}
        \Tilde{g}_k &= g_k-\frac{g_{k-1}-2g_k+g_{k+1}}{24},\quad 
        \Tilde{g}_k' = \frac{g_{k+1}-g_{k-1}}{2 \Delta x},
        \quad \Tilde{g}_k'' = \frac{g_{k+1}-2 g_k+ g_{k-1}}{\Delta x^2}.
\end{split}
\end{equation*}

The weights $w_k^i$ appearing in \eqref{eq:coeffCWENO3}, for $k=\{i-1,i,i+1\}$, satisfy
\begin{equation}\label{eq:weightsCWENO3}
    w_k^i=\frac{\alpha_k^i}{\alpha_{i-1}^i+\alpha_i^i+\alpha_{i+1}^i},\quad\text{where}\quad \alpha_k^i=\frac{C_k}{\lt(\epsilon+IS_k^i\rt)^p}.
\end{equation}
The constants $C_{i-1}$, $C_{i}$, $C_{i+1}$, $\epsilon$ and $p$ for $\alpha_k^i$ in \eqref{eq:weightsCWENO3} are
\begin{equation}\label{eq:constantsCWEN03}
    C_{i-1}=\frac{3}{16},\quad C_{i}=\frac{5}{8}, \quad C_{i+1}=\frac{3}{16}, \quad \epsilon=10^{-6},\quad p=3.
\end{equation}
Finally, the smoothness indicators $IS_k^i$ for $\alpha_k^i$ in \eqref{eq:weightsCWENO3}, where   $k=\{i-1,i,i+1\}$, result from
\begin{equation}\label{eq:ISCWENO3}
\begin{split}
    IS_{i-1}^i &= \frac{13}{12} \lt(g_{i-2}-2g_{i-1}+g_i\rt)^2+\frac{1}{4}\lt(g_{i-2}-4g_{i-1}+3g_i\rt)^2,\\
    IS_{i}^i &= \frac{13}{12} \lt(g_{i-1}-2 g_i +g_{i+1}\rt)^2+\frac{1}{4}\lt(g_{i-1}-g_{i+1}\rt)^2,\\
    IS_{i+1}^i &= \frac{13}{12} \lt(g_i-2g_{i+1}+g_{i+2}\rt)^2+\frac{1}{4}\lt(3g_i-4g_{i+1}+g_{i+2}\rt)^2.\\
    \end{split}
\end{equation}
The proposed choice of the constants in \eqref{eq:constantsCWEN03} and the smoothness indicators in \eqref{eq:ISCWENO3} is based on the original work for third-order CWENO reconstruction in \cite{levy1999central}. The reader can find about other more refined choices in \cite{levy2000compact} and later works \cite{semplice2016adaptive,arandiga2011analysis,kolb2014full,cravero2016accuracy}. 
%
%

\subsection{Fifth-order CWENO reconstruction}\label{subsec:fifthCWENO}

The fifth-order CWENO reconstruction from \cite{capdeville2008central} satisfies
\begin{equation}\label{eq:CWENO5}
    R_i^g(x)=g_{opt}(x)+\sum_{k\in\{1,2,3,c\}}\lt(w^i_k-C_k\rt)g_k(x),
\end{equation}
with $g_{opt}$, $g_1$, $g_2$, $g_3$ and $g_c$ resulting from
\begin{equation}\label{eq:polyCWENO5}
\begin{gathered}
g_{opt}(x) = \sum_{j=1}^5 a_{j} (x-x_i)^{j-1},\quad g_{1}(x) = \sum_{j=1}^3 b_{j} (x-x_i)^{j-1},\quad g_{2}(x) = \sum_{j=1}^3 c_{j} (x-x_i)^{j-1},\\[3pt]
\quad g_{3}(x) = \sum_{j=1}^3 d_{j} (x-x_i)^{j-1},\quad g_c(x) = \lt(g_{opt}(x)-C_1\,g_{1}(x)-C_2\,g_{2}(x)-C_3\,g_{3}(x)\rt)/C_c .
\end{gathered}
\end{equation}
The coefficients $a_{j}$ for $j\in\{1,2,3,4,5\}$ which appear in the optimal polynomial $g_{opt}(x)$ in \eqref{eq:polyCWENO5} are taken as
\begin{equation*}
\begin{gathered}
a_1 = \frac{1067}{960}g_i-\frac{29}{480}\lt(g_{i+1}+g_{i-1}\rt)+\frac{3}{640}\lt(g_{i+2}+g_{i-2}\rt),\quad a_2 = \frac{34\lt(g_{i+1}-g_{i-1}\rt)+5\lt(g_{i-2}-g_{i+2}\rt)}{48\Delta x},\\[6pt]
a_3 = \frac{g_{i-2}+22\,g_i+g_{i+2}-12\lt(g_{i+1}+g_{i-1}\rt)}{-16\Delta x^2},\quad a_4 = \frac{2\lt(g_{i+1}-g_{i-1}\rt)+\lt(g_{i-2}-g_{i+2}\rt)}{-12 \Delta x^3},\\[6pt]
a_5 = \frac{g_{i-2}+6g_i+g_{i+2}-4\lt(g_{i+1}+g_{i-1}\rt)}{24\Delta x^4}.
\end{gathered}
\end{equation*}

The rest of the coefficients $b_j$, $c_j$ and $d_j$ for $j\in\{1,2,3\}$ which appear in the polynomials $g_1$, $g_2$ and $g_3$, respectively, follow from
\begin{equation*}
\begin{gathered}
b_1 = \frac{23}{24}g_i+\frac{1}{12}\lt(g_{i-1}-\frac{1}{2}g_{i-2}\rt),\quad b_2 = \frac{3g_i-4g_{i-1}+g_{i-2}}{2\Delta x}, \quad b_3 = \frac{g_i-2g_{i-1}+g_{i-2}}{2\Delta x^2},\\[6pt]
c_1 = \frac{13}{12}g_i+\frac{1}{24}\lt(g_{i-1}+g_{i+1}\rt),\quad c_2 = \frac{g_{i+1}-g_{i-1}}{2\Delta x}, \quad c_3 = \frac{g_{i+1}-2g_{i}+g_{i-1}}{2\Delta x^2},\\[6pt]
d_1 = \frac{23}{24}g_i+\frac{1}{12}\lt(g_{i+1}-\frac{1}{2}g_{i+2}\rt),\quad d_2 = \frac{3g_i-4g_{i+1}+g_{i+2}}{-2\Delta x}, \quad d_3 = \frac{g_i-2g_{i+1}+g_{i+2}}{2\Delta x^2}.
\end{gathered}
\end{equation*}

The weights $w^i_k$ for $k \in\{1,2,3,c\}$ in the fifth-order CWENO reconstruction \eqref{eq:CWENO5} satisfy
\begin{equation}\label{eq:weightsCWENO5}
    w_k^i=\frac{\alpha_k^i}{\sum_{k\in\{1,2,3,c\}}\alpha^i_k},\quad\text{where}\quad \alpha_k^i=\frac{C_k}{\lt(\epsilon+IS_k^i\rt)^p}.
\end{equation}

The constants $C_{1}$, $C_{2}$, $C_{3}$, $C_{c}$, $\epsilon$ and $p$ for $\alpha_k^i$ in \eqref{eq:weightsCWENO5} are
\begin{equation*}
    C_{1}=\frac{1}{8},\quad C_{2}=\frac{1}{4}, \quad C_{3}=\frac{1}{8}, \quad C_{c}=\frac{1}{2}, \quad \epsilon=10^{-6},\quad p=2.
\end{equation*}

Finally, the smoothness indicators $IS_k^i$ which appear in the computation of $\alpha_k^i$ in \eqref{eq:weightsCWENO5}, with   $k\in\{1,2,3,c\}$, result from
\begin{equation*}
\begin{gathered}
    IS_{1}^i = b_2^2\Delta x^2+\frac{13}{3}b_3^2\Delta x^4,\quad
    IS_{2}^i = c_2^2\Delta x^2+\frac{13}{3}c_3^2\Delta x^4,\quad IS_{3}^i = d_2^2\Delta x^2+\frac{13}{3}d_3^2\Delta x^4,\\
    IS_{4}^i = a_2^2\Delta x^2+\lt(\frac{13}{3}a_3^2+\frac{1}{2}a_2\,a_4\rt)\Delta x^4.\\
    \end{gathered}
\end{equation*}

%
%

\subsection{Positive-density CWENO reconstruction}\label{subsec:positiveCWENO}

The third- and fifth-order CWENO reconstructions in \eqref{eq:CWENO3} or \eqref{eq:CWENO5}, respectively, can be modified to yield positive values for evaluations at specific spatial points of the finite volume cell. In our case we are interested in obtaining positive values of the density at the points required by the numerical scheme in \eqref{eq:hofinal}. Namely, those points are: the boundaries of the cell, computed in \eqref{eq:recons_b} and employed for in \eqref{eq:numflux_2}, \eqref{eq:fluxsource_2} and \eqref{eq:fluxhom_2}, and the quadrature points to compute the integrals in the nonlinear damping term in \eqref{eq:hofinal}, the distribution of the high-order corrections in the source term in \eqref{eq:hofinal} and described in appendix \ref{app:source}, and the update of $K_i^{n+1}$ in \eqref{eq:Kn+1}. We refer the reader to the appendices \ref{app:gauss} and \ref{app:source} for the details of the location of the quadrature points. 

Zhang and Shu \cite{zhang2010maximum} proposed a methodology to construct maximum-principle-satisfying high-order schemes. Here we apply their work in \cite{zhang2010maximum} to modify the CWENO reconstructions in the subsections \ref{subsec:thirdCWENO} and \ref{subsec:fifthCWENO}, so that they preserve the positivity of the density. The procedure is the following:

\begin{enumerate}[label=\arabic*)]
    \item Construct the third- or fifth-order CWENO polynomial in \eqref{eq:CWENO3} or \eqref{eq:CWENO5}, respectively, for all the finite volume cells.
    \item Evaluate the reconstructed polynomials at the spatial points required by the numerical scheme in \eqref{eq:hofinal}: the boundaries of the cells and quadrature points. We denote as $M$ the total number those spatial points. 
    \item For each cell $i$ compute the minimum $m_i$ of the evaluations of the reconstructed polynomial at the required spatial points $x_i^j$, for $j\in \{1,\ldots,M\}$, so that $m_i=\min_{j\in \{1,\ldots,M\}} R_i^g(x_i^j)$.
    \item Modify the reconstructed polynomial in \eqref{eq:CWENO3} or \eqref{eq:CWENO5} so that $m_i\geq 0$, in the following way:
    \begin{equation*}
        \widetilde{R_i^g}(x)=\theta \lt(R_i^g(x)-g_i\rt)+g_i,\quad \text{with}\quad \theta=\min \lt\{\frac{\lt|0-g_i\rt|}{\lt|m_i-g_i\rt|},1\rt\}.
    \end{equation*}
    \item Evaluate all the $M$ spatial points within each cell with the modified reconstructed polynomial $\widetilde{R_i^g}(x)$.
    \item Apply the following CFL condition for the time step $\Delta t$, depending on the numerical flux employed and where $\alpha_j$ is the quadrature weight of the spatial point $x_i^j$, for $j\in \{1,\ldots,M\}$:
    \begin{equation}\label{eq:CFL}
\Delta t= \begin{cases}
\text{CFL} \frac{\Delta x\,\min_{{j\in \{1,\ldots,M\}}} \alpha_j}{\max_{\forall\left(U_{i+\frac{1}{2}}^-,U_{i+\frac{1}{2}}^+\right)}\left\{\left|u+\sqrt{P'(\rho)}\right|,\left|u-\sqrt{P'(\rho)}\right|\right\}}, & \text{if Lax-Friedrich flux},\\[1cm]
\text{CFL} \frac{\Delta x\,\min_{{j\in \{1,\ldots,M\}}} \alpha_j}{\max_{\forall\left(U_{i+\frac{1}{2}}^-,U_{i+\frac{1}{2}}^+\right)}\left\{|u|+3^{\frac{m-1}{4}}\right\}}, & \text{if kinetic flux}.
\end{cases}
\end{equation}
The quadrature weights $\alpha_j$ employed in this work are specified in the appendices \ref{app:gauss} and \ref{app:source}. For the details about the numerical fluxes we refer the reader to our previous work \cite{carrillo2018wellbalanced}.

\end{enumerate}

%
%

\section{Details about the integration for the high-order corrections}\label{app:source}

In this section we follow \cite{noelle2006well,noelle2007high} to propose a fourth- and sixth-order quadrature for the high-order corrections in the source term, which appear in the last term of \eqref{eq:hofinal} and satisfy
\begin{equation}\label{eq:correc}
    I_i=\int_{x_{i-\frac{1}{2}}}^{x_{i+\frac{1}{2}}}  \left(S\left(R^{\rho}_i(x),R^{H}_i(x)\right)-S\left(U_i^*(x),H_i^*(x)\right)\right)\,dx.
\end{equation}

The first step is to define a general trapezoidal numerical quadrature $I_i^m$ for the integral $I_i$ in \eqref{eq:correc}, which employs $m$ points $x_i^j=x_{i-1/2}+(j-1)\Delta x/m$ of the cell $i$, with $j\in\{1,\ldots,m\}$. Such integral yields
\begin{equation}
    I_i^m=\sum_{j=1}^{m-1}\lt[\frac{R^{\rho}_i(x_i^j)+R^{\rho}_i(x_i^{j+1})}{2}\left(R^{H}_i(x_i^{j+1})-R^{H}_i(x_i^{j})\right)-\frac{R^{\rho}_i(x_i^j)+R^{\rho}_i(x_i^{j+1})}{2}\left(H^*_i(x_i^{j+1})-H^*_i(x_i^{j})\right)\rt].
\end{equation}
The integral $I_i^m$ is a second-order approximation of $I_i$ in \eqref{eq:correc}, and its asymptotic form satisfies \cite{noelle2006well}
\begin{equation}\label{eq:int_error}
    I_i^m=I_i+c_1\lt(\frac{\Delta x}{m}\rt)^2+c_2\lt(\frac{\Delta x}{m}\rt)^4+\ldots
\end{equation}
The strategy to obtain the fourth- and sixth-order schemes relies in computing the integral $I_i$ in \eqref{eq:correc} as a linear combination of $I_i^m$ for different $m$, such that the desired errors in \eqref{eq:int_error} are cancelled. The required formulas of integration are:

\begin{enumerate}[label=\alph*)]
    \item Fourth-order quadrature employing $I_i^1$ and $I_i^2$, so that
    \begin{equation*}
        I_i=\frac{4I_i^2-I_i^1}{3}+\mathcal{O}(\Delta x^4).
    \end{equation*}
    \item Sixth-order quadrature employing $I_i^1$, $I_i^2$ and $I_i^3$, so that
    \begin{equation*}
        I_i=\frac{81}{40}I_i^3-\frac{16}{15}I_i^2+\frac{1}{24}I_i^1+\mathcal{O}(\Delta x^6).
    \end{equation*}
\end{enumerate}

As a remark, the order of these quadratures is maintained as long as the order of the reconstructions for the density in \eqref{eq:reconsCWENO} and the potential in \eqref{eq:reconsH} is greater than or at least equal to the order of the quadrature formulas. Otherwise the order of the quadrature is diminished and matches the order of the reconstruction.

%
%

\bibliographystyle{siam}
\bibliography{High-Order-bib}

\begin{thebibliography}{10}

\bibitem{abramowitz1988handbook}
{\sc M.~Abramowitz, I.~A. Stegun, and R.~H. Romer}, {\em Handbook of
  mathematical functions with formulas, graphs, and mathematical tables}, 1988.

\bibitem{arandiga2011analysis}
{\sc F.~Ar{\`a}ndiga, A.~Baeza, A.~Belda, and P.~Mulet}, {\em Analysis of
  {WENO} schemes for full and global accuracy}, SIAM Journal on Numerical
  Analysis, 49 (2011), pp.~893--915.

\bibitem{audusse2004fast}
{\sc E.~Audusse, F.~Bouchut, M.-O. Bristeau, R.~Klein, and B.~t. Perthame},
  {\em {A fast and stable well-balanced scheme with hydrostatic reconstruction
  for shallow water flows}}, SIAM Journal on Scientific Computing, 25 (2004),
  pp.~2050--2065.

\bibitem{audusse2015very}
{\sc E.~Audusse, C.~Chalons, and P.~Ung}, {\em A very simple well-balanced
  positive and entropy-satisfying scheme for the shallow-water equations},
  Commun. Math. Sci, 13 (2015), pp.~1317--1332.

\bibitem{bailo1811fully}
{\sc R.~Bailo, J.~A. Carrillo, and J.~Hu}, {\em Fully discrete
  positivity-preserving and energy-decaying schemes for aggregation-diffusion
  equations with a gradient flow structure}, Preprint arxiv, 2018 (1811).

\bibitem{bellomo2015toward}
{\sc N.~Bellomo, A.~Bellouquid, Y.~Tao, and M.~Winkler}, {\em Toward a
  mathematical theory of {K}eller--{S}egel models of pattern formation in
  biological tissues}, Mathematical Models and Methods in Applied Sciences, 25
  (2015), pp.~1663--1763.

\bibitem{bermudez1994upwind}
{\sc A.~Berm{\'{u}}dez and M.~E. V{\'{a}}zquez}, {\em {Upwind methods for
  hyperbolic conservation laws with source terms}}, {C}omputers \& {F}luids, 23
  (1994), pp.~1049--1071.

\bibitem{bian2013dynamic}
{\sc S.~Bian and J.-G. Liu}, {\em Dynamic and steady states for
  multi-dimensional {K}eller-{S}egel model with diffusion exponent $m> 0$},
  Communications in Mathematical Physics, 323 (2013), pp.~1017--1070.

\bibitem{binney2011galactic}
{\sc J.~Binney and S.~Tremaine}, {\em Galactic dynamics}, Princeton university
  press, 2011.

\bibitem{calvez2017equilibria}
{\sc V.~Calvez, J.~A. Carrillo, and F.~Hoffmann}, {\em Equilibria of
  homogeneous functionals in the fair-competition regime}, Nonlinear Analysis,
  159 (2017), pp.~85--128.

\bibitem{calvez2017geometry}
{\sc V.~Calvez, J.~A. Carrillo, and F.~Hoffmann}, {\em The geometry of
  diffusing and self-attracting particles in a one-dimensional fair-competition
  regime}, in Nonlocal and nonlinear diffusions and interactions: new methods
  and directions, Springer, 2017, pp.~1--71.

\bibitem{capdeville2008central}
{\sc G.~Capdeville}, {\em A central {WENO} scheme for solving hyperbolic
  conservation laws on non-uniform meshes}, Journal of Computational Physics,
  227 (2008), pp.~2977--3014.

\bibitem{carrillo2015finite}
{\sc J.~A. Carrillo, A.~Chertock, and Y.~Huang}, {\em {A finite-volume method
  for nonlinear nonlocal equations with a gradient flow structure}},
  Communications in Computational Physics, 17 (2015), pp.~233--258.

\bibitem{carrillo2017review}
{\sc J.~A. Carrillo, Y.-P. Choi, and S.~P. Perez}, {\em {A review on
  attractive--repulsive hydrodynamics for consensus in collective behavior}},
  in Act. Part. Vol. 1, Springer, 2017, pp.~259--298.

\bibitem{carrillo2016critical}
{\sc J.~A. Carrillo, Y.-P. Choi, E.~Tadmor, and C.~Tan}, {\em Critical
  thresholds in 1d {E}uler equations with non-local forces}, Mathematical
  Models and Methods in Applied Sciences, 26 (2016), pp.~185--206.

\bibitem{carrillo2017weak}
{\sc J.~A. Carrillo, E.~Feireisl, P.~Gwiazda, and
  A.~{\'{S}}wierczewska-Gwiazda}, {\em {Weak solutions for {E}uler systems with
  non-local interactions}}, Journal of the London Mathematical Society, 95
  (2017), pp.~705--724.

\bibitem{carrillo2010asymptotic}
{\sc J.~A. Carrillo, M.~Fornasier, J.~Rosado, and G.~Toscani}, {\em Asymptotic
  flocking dynamics for the kinetic cucker--smale model}, SIAM Journal on
  Mathematical Analysis, 42 (2010), pp.~218--236.

\bibitem{carrillo2010particle}
{\sc J.~A. Carrillo, M.~Fornasier, G.~Toscani, and F.~Vecil}, {\em Particle,
  kinetic, and hydrodynamic models of swarming}, in Mathematical modeling of
  collective behavior in socio-economic and life sciences, Springer, 2010,
  pp.~297--336.

\bibitem{carrillo2018ground}
{\sc J.~A. Carrillo, F.~Hoffmann, E.~Mainini, and B.~Volzone}, {\em Ground
  states in the diffusion-dominated regime}, Calculus of variations and partial
  differential equations, 57 (2018), p.~127.

\bibitem{carrillo2014explicit}
{\sc J.~A. Carrillo, Y.~Huang, and S.~Martin}, {\em {Explicit flock solutions
  for quasi-morse potentials}}, European Journal of Applied Mathematics, 25
  (2014), pp.~553--578.

\bibitem{carrillo2018wellbalanced}
{\sc J.~A. Carrillo, S.~Kalliadasis, S.~P. Perez, and C.-W. Shu}, {\em
  Well-balanced finite volume schemes for hydrodynamic equations with general
  free energy}, 2020.
\newblock to appear.

\bibitem{carrillo2018longtime}
{\sc J.~A. Carrillo, A.~Wr{\'o}blewska-Kami{\'n}ska, and E.~Zatorska}, {\em On
  long-time asymptotics for viscous hydrodynamic models of collective behavior
  with damping and nonlocal interactions}, Mathematical Models and Methods in
  Applied Sciences, 29 (2018), pp.~31--63.

\bibitem{castro2017well}
{\sc M.~J. Castro, T.~M. de~Luna, and C.~Par{\'e}s}, {\em Well-balanced schemes
  and path-conservative numerical methods}, in Handbook of Numerical Analysis,
  vol.~18, Elsevier, 2017, pp.~131--175.

\bibitem{castro13}
{\sc M.~J. Castro, J.~A. L{\'o}pez-Garc{\'i}a, and C.~Par{\'e}s}, {\em High
  order exactly well-balanced numerical methods for shallow water systems},
  Journal of Computational Physics, 246 (2013), pp.~242 -- 264.

\bibitem{castro2007well}
{\sc M.~J. Castro, A.~Pardo~Milan{\'e}s, and C.~Par{\'e}s}, {\em Well-balanced
  numerical schemes based on a generalized hydrostatic reconstruction
  technique}, Mathematical Models and Methods in Applied Sciences, 17 (2007),
  pp.~2055--2113.

\bibitem{castro2020}
{\sc M.~J. Castro and C.~Par{\'e}s}, {\em Well-balanced high-order finite
  volume methods for systems of balance laws}, Journal of Scientific Computing,
  82 (2020), p.~48.

\bibitem{castro2019third}
{\sc M.~J. Castro and M.~Semplice}, {\em Third-and fourth-order well-balanced
  schemes for the shallow water equations based on the {CWENO} reconstruction},
  International Journal for Numerical Methods in Fluids, 89 (2019),
  pp.~304--325.

\bibitem{chalons2018high}
{\sc C.~Chalons, P.~Goatin, and L.~M. Villada}, {\em High-order numerical
  schemes for one-dimensional nonlocal conservation laws}, SIAM Journal on
  Scientific Computing, 40 (2018), pp.~A288--A305.

\bibitem{chavanis2007kinetic}
{\sc P.-H. Chavanis and C.~Sire}, {\em Kinetic and hydrodynamic models of
  chemotactic aggregation}, Physica A: Statistical Mechanics and its
  Applications, 384 (2007), pp.~199--222.

\bibitem{cheng2019new}
{\sc Y.~Cheng, A.~Chertock, M.~Herty, A.~Kurganov, and T.~Wu}, {\em A new
  approach for designing moving-water equilibria preserving schemes for the
  shallow water equations}, Journal of Scientific Computing, 80 (2019),
  pp.~538--554.

\bibitem{chertock2018well}
{\sc A.~Chertock, S.~Cui, A.~Kurganov, {\c{S}}.~N. {\"O}zcan, and E.~Tadmor},
  {\em Well-balanced schemes for the {E}uler equations with gravitation:
  Conservative formulation using global fluxes}, Journal of Computational
  Physics, 358 (2018), pp.~36--52.

\bibitem{choi2017emergent}
{\sc Y.-P. Choi, S.-Y. Ha, and Z.~Li}, {\em Emergent dynamics of the
  cucker--smale flocking model and its variants}, in Active Particles, Volume
  1, Springer, 2017, pp.~299--331.

\bibitem{churuksaeva2015mathematical}
{\sc V.~Churuksaeva and A.~Starchenko}, {\em Mathematical modeling of a river
  stream based on a shallow water approach}, Procedia Computer Science, 66
  (2015), pp.~200--209.

\bibitem{cordier2000quasineutral}
{\sc S.~Cordier and E.~Grenier}, {\em Quasineutral limit of an
  {E}uler-{P}oisson system arising from plasma physics}, Communications in
  Partial Differential Equations, 25 (2000), pp.~1099--1113.

\bibitem{cozzolino2018solution}
{\sc L.~Cozzolino, V.~Pepe, L.~Cimorelli, A.~D'Aniello, R.~Della~Morte, and
  D.~Pianese}, {\em The solution of the dam-break problem in the porous shallow
  water equations}, Advances in Water Resources, 114 (2018), pp.~83--101.

\bibitem{cravero2016accuracy}
{\sc I.~Cravero and M.~Semplice}, {\em On the accuracy of {WENO} and {CWENO}
  reconstructions of third order on nonuniform meshes}, Journal of Scientific
  Computing, 67 (2016), pp.~1219--1246.

\bibitem{cucker2007emergent}
{\sc F.~Cucker and S.~Smale}, {\em {Emergent behavior in flocks}}, IEEE
  Transactions on Automatic Control, 52 (2007), pp.~852--862.

\bibitem{cucker2007mathematics}
\leavevmode\vrule height 2pt depth -1.6pt width 23pt, {\em {On the mathematics
  of emergence}}, Japanese J. Math., 2 (2007), pp.~197--227.

\bibitem{cucker2004modeling}
{\sc F.~Cucker, S.~Smale, and D.-X. Zhou}, {\em Modeling language evolution},
  Foundations of Computational Mathematics, 4 (2004), pp.~315--343.

\bibitem{filbet2006finite}
{\sc F.~Filbet}, {\em A finite volume scheme for the
  {P}atlak--{K}eller--{S}egel chemotaxis model}, Numerische Mathematik, 104
  (2006), pp.~457--488.

\bibitem{filbet2005approximation}
{\sc F.~Filbet and C.-W. Shu}, {\em {Approximation of hyperbolic models for
  chemosensitive movement}}, SIAM Journal on Scientific Computing, 27 (2005),
  pp.~850--872.

\bibitem{fjordholm2012arbitrarily}
{\sc U.~S. Fjordholm, S.~Mishra, and E.~Tadmor}, {\em Arbitrarily high-order
  accurate entropy stable essentially nonoscillatory schemes for systems of
  conservation laws}, SIAM Journal on Numerical Analysis, 50 (2012),
  pp.~544--573.

\bibitem{gallardo2007well}
{\sc J.~M. Gallardo, C.~Par{\'e}s, and M.~J. Castro}, {\em On a well-balanced
  high-order finite volume scheme for shallow water equations with topography
  and dry areas}, Journal of Computational Physics, 227 (2007), pp.~574--601.

\bibitem{gamba2003percolation}
{\sc A.~Gamba, D.~Ambrosi, A.~Coniglio, A.~{De Candia}, S.~{Di Talia},
  E.~Giraudo, G.~Serini, L.~Preziosi, and F.~Bussolino}, {\em {Percolation,
  morphogenesis, and Burgers dynamics in blood vessels formation}}, Physical
  Review Letters, 90 (2003), p.~118101.

\bibitem{gascon2001construction}
{\sc L.~Gasc{\'o}n and J.~Corber{\'a}n}, {\em Construction of second-order tvd
  schemes for nonhomogeneous hyperbolic conservation laws}, Journal of
  computational physics, 172 (2001), pp.~261--297.

\bibitem{giardina2008collective}
{\sc I.~Giardina}, {\em Collective behavior in animal groups: theoretical
  models and empirical studies}, HFSP journal, 2 (2008), pp.~205--219.

\bibitem{giesselmann2017relative}
{\sc J.~Giesselmann, C.~Lattanzio, and A.~E. Tzavaras}, {\em {Relative energy
  for the Korteweg theory and related Hamiltonian flows in gas dynamics}},
  Archive for Rational Mechanics and Analysis, 223 (2017), pp.~1427--1484.

\bibitem{goddard2012general}
{\sc B.~D. Goddard, A.~Nold, N.~Savva, P.~G. A, and S.~Kalliadasis}, {\em
  General dynamical density functional theory for classical fluids}, Physical
  Review Letters, 109 (2012), p.~120603.

\bibitem{goddard2012unification}
{\sc B.~D. Goddard, A.~Nold, N.~Savva, P.~Yatsyshin, and S.~Kalliadasis}, {\em
  {Unification of dynamic density functional theory for colloidal fluids to
  include inertia and hydrodynamic interactions: derivation and numerical
  experiments}}, Journal of Physics: Condensed Matter, 25 (2012), p.~35101.

\bibitem{gosse2000well}
{\sc L.~Gosse}, {\em A well-balanced flux-vector splitting scheme designed for
  hyperbolic systems of conservation laws with source terms}, Computers \&
  {M}athematics with {A}pplications, 39 (2000), pp.~135--159.

\bibitem{gosse2012asymptotic}
{\sc L.~Gosse}, {\em Asymptotic-preserving and well-balanced schemes for the 1d
  cattaneo model of chemotaxis movement in both hyperbolic and diffusive
  regimes}, Journal of Mathematical Analysis and Applications, 388 (2012),
  pp.~964--983.

\bibitem{gottlieb1998total}
{\sc S.~Gottlieb and C.-W. Shu}, {\em {Total variation diminishing Runge-Kutta
  schemes}}, Math. Comput. Am. Math. Soc., 67 (1998), pp.~73--85.

\bibitem{greenberg1996well}
{\sc J.~M. Greenberg and A.-Y. LeRoux}, {\em {A well-balanced scheme for the
  numerical processing of source terms in hyperbolic equations}}, SIAM Journal
  on Numerical Analysis, 33 (1996), pp.~1--16.

\bibitem{guo2011global}
{\sc Y.~Guo and B.~Pausader}, {\em Global smooth ion dynamics in the
  {E}uler-{P}oisson system}, Communications in Mathematical Physics, 303
  (2011), pp.~89--125.

\bibitem{ha2008from}
{\sc S.-Y. Ha and E.~Tadmor}, {\em From particle to kinetic and hydrodynamic
  descriptions of flocking}, Kinetic \& Related Models, 1 (2008), p.~415.

\bibitem{hadvzic2019class}
{\sc M.~Had{\v{z}}i{\'c} and J.~J. Jang}, {\em A class of global solutions to
  the {E}uler--{P}oisson system}, Communications in Mathematical Physics, 370
  (2019), pp.~475--505.

\bibitem{hildenbrandt2010self}
{\sc H.~Hildenbrandt, C.~Carere, and C.~K. Hemelrijk}, {\em Self-organized
  aerial displays of thousands of starlings: a model}, Behavioral Ecology, 21
  (2010), pp.~1349--1359.

\bibitem{katz2011inferring}
{\sc Y.~Katz, K.~Tunstr{\o}m, C.~C. Ioannou, C.~Huepe, and I.~D. Couzin}, {\em
  Inferring the structure and dynamics of interactions in schooling fish},
  Proceedings of the National Academy of Sciences, 108 (2011),
  pp.~18720--18725.

\bibitem{keller1970initiation}
{\sc E.~F. Keller and L.~A. Segel}, {\em Initiation of slime mold aggregation
  viewed as an instability}, Journal of theoretical biology, 26 (1970),
  pp.~399--415.

\bibitem{klingenberg2019arbitrary}
{\sc C.~Klingenberg, G.~Puppo, and M.~Semplice}, {\em Arbitrary order finite
  volume well-balanced schemes for the {E}uler equations with gravity}, SIAM
  Journal on Scientific Computing, 41 (2019), pp.~A695--A721.

\bibitem{kolb2014full}
{\sc O.~Kolb}, {\em On the full and global accuracy of a compact third order
  {WENO} scheme}, SIAM Journal on Numerical Analysis, 52 (2014),
  pp.~2335--2355.

\bibitem{lattanzio2017gas}
{\sc C.~Lattanzio and A.~E. Tzavaras}, {\em From gas dynamics with large
  friction to gradient flows describing diffusion theories}, Communications in
  Partial Differential Equations, 42 (2017), pp.~261--290.

\bibitem{levy1999central}
{\sc D.~Levy, G.~Puppo, and G.~Russo}, {\em Central {WENO} schemes for
  hyperbolic systems of conservation laws}, ESAIM: Mathematical Modelling and
  Numerical Analysis, 33 (1999), pp.~547--571.

\bibitem{levy2000compact}
\leavevmode\vrule height 2pt depth -1.6pt width 23pt, {\em Compact central
  {WENO} schemes for multidimensional conservation laws}, SIAM Journal on
  Scientific Computing, 22 (2000), pp.~656--672.

\bibitem{marche2007evaluation}
{\sc F.~Marche, P.~Bonneton, P.~Fabrie, and N.~Seguin}, {\em Evaluation of
  well-balanced bore-capturing schemes for 2d wetting and drying processes},
  International Journal for Numerical Methods in Fluids, 53 (2007),
  pp.~867--894.

\bibitem{minakowski2019singular}
{\sc P.~Minakowski, P.~B. Mucha, J.~Peszek, and E.~Zatorska}, {\em Singular
  cucker--smale dynamics}, in Active Particles, Volume 2, Springer, 2019,
  pp.~201--243.

\bibitem{motsch2011new}
{\sc S.~Motsch and E.~Tadmor}, {\em A new model for self-organized dynamics and
  its flocking behavior}, Journal of Statistical Physics, 144 (2011), p.~923.

\bibitem{natalini2012asymptotic}
{\sc R.~Natalini and M.~Ribot}, {\em Asymptotic high order mass-preserving
  schemes for a hyperbolic model of chemotaxis}, SIAM Journal on Numerical
  Analysis, 50 (2012), pp.~883--905.

\bibitem{natalini2012well}
{\sc R.~Natalini, M.~Ribot, and M.~Twarogowska}, {\em A well-balanced numerical
  scheme for a one dimensional quasilinear hyperbolic model of chemotaxis},
  arXiv preprint arXiv:1211.4010,  (2012).

\bibitem{noelle2006well}
{\sc S.~Noelle, N.~Pankratz, G.~Puppo, and J.~R. Natvig}, {\em Well-balanced
  finite volume schemes of arbitrary order of accuracy for shallow water
  flows}, Journal of Computational Physics, 213 (2006), pp.~474--499.

\bibitem{noelle2007high}
{\sc S.~Noelle, Y.~Xing, and C.-W. Shu}, {\em High-order well-balanced finite
  volume {WENO} schemes for shallow water equation with moving water}, Journal
  of Computational Physics, 226 (2007), pp.~29--58.

\bibitem{perea2009extension}
{\sc L.~Perea, G.~G{\'o}mez, and P.~Elosegui}, {\em Extension of the
  cucker-smale control law to space flight formations}, Journal of guidance,
  control, and dynamics, 32 (2009), pp.~527--537.

\bibitem{perthame2001kinetic}
{\sc B.~Perthame and C.~Simeoni}, {\em {A kinetic scheme for the Saint-Venant
  system with a source term}}, Calcolo, 38 (2001), pp.~201--231.

\bibitem{semplice2016adaptive}
{\sc M.~Semplice, A.~Coco, and G.~Russo}, {\em Adaptive mesh refinement for
  hyperbolic systems based on third-order compact {WENO} reconstruction},
  Journal of Scientific Computing, 66 (2016), pp.~692--724.

\bibitem{serini2003modeling}
{\sc G.~Serini, D.~Ambrosi, E.~Giraudo, A.~Gamba, L.~Preziosi, and
  F.~Bussolino}, {\em Modeling the early stages of vascular network assembly},
  The EMBO journal, 22 (2003), pp.~1771--1779.

\bibitem{short2008statistical}
{\sc M.~B. Short, M.~R. D'orsogna, V.~B. Pasour, G.~E. Tita, P.~J. Brantingham,
  A.~L. Bertozzi, and L.~B. Chayes}, {\em A statistical model of criminal
  behavior}, Mathematical Models and Methods in Applied Sciences, 18 (2008),
  pp.~1249--1267.

\bibitem{Shu97}
{\sc C.-W. Shu}, {\em Essentially non-oscillatory and weighted essentially
  non-oscillatory schemes for hyperbolic conservation laws}, tech. rep., 1997.

\bibitem{Shu88}
{\sc C.-W. Shu and S.~Osher}, {\em Efficient implementation of essentially
  non-oscillatory shock-capturing schemes}, Journal of Computational Physics,
  77 (1988), pp.~439--471.

\bibitem{shuosh}
\leavevmode\vrule height 2pt depth -1.6pt width 23pt, {\em Efficient
  implementation of essentially non-oscillatory shock-capturing schemes, {II}},
  Journal of Computational Physics, 83 (1989), pp.~32--78.

\bibitem{tadmor2016entropy}
{\sc E.~Tadmor}, {\em Entropy stable schemes}, in Handbook of Numerical
  Analysis, vol.~17, Elsevier, 2016, pp.~467--493.

\bibitem{thomann2019second}
{\sc A.~Thomann, M.~Zenk, and C.~Klingenberg}, {\em A second-order
  positivity-preserving well-balanced finite volume scheme for {E}uler
  equations with gravity for arbitrary hydrostatic equilibria}, International
  Journal for Numerical Methods in Fluids, 89 (2019), pp.~465--482.

\bibitem{xing2006high}
{\sc Y.~Xing and C.-W. Shu}, {\em High order well-balanced finite volume {WENO}
  schemes and discontinuous {G}alerkin methods for a class of hyperbolic
  systems with source terms}, Journal of Computational Physics, 214 (2006),
  pp.~567--598.

\bibitem{xing2014survey}
\leavevmode\vrule height 2pt depth -1.6pt width 23pt, {\em A survey of high
  order methods for the shallow water equations}, Journal of Mathematical
  Study, 47 (2014).

\bibitem{zhang2010maximum}
{\sc X.~Zhang and C.-W. Shu}, {\em On maximum-principle-satisfying high order
  schemes for scalar conservation laws}, Journal of Computational Physics, 229
  (2010), pp.~3091--3120.

\end{thebibliography}

\end{document}